\theoremstyle{plain}
\newtheorem{thm}{Theorem}[section]
\crefname{thm}{Theorem}{Theorems}
\theoremstyle{plain}
\newtheorem{lem}[thm]{Lemma}
\crefname{lem}{Lemma}{Lemmas}
\theoremstyle{plain}
\newtheorem{cor}[thm]{Corollary}
\theoremstyle{plain}
\newtheorem*{claim*}{Claim}
\crefname{claim}{Claim}{Claims}
\theoremstyle{definition}
\newtheorem{defn}[thm]{Definition}
\theoremstyle{plain}
\newtheorem{conjecture}[thm]{Conjecture}
\crefname{conjecture}{Conjecture}{Conjectures}
\theoremstyle{plain}
\crefname{prop}{Proposition}{Propositions}
\theoremstyle{definition}
\theoremstyle{definition}
\theoremstyle{plain}
\newtheorem{claim}[thm]{Claim}
\crefname{subsection}{Subsection}{Subsections}
\let\originalleft\left
\let\originalright\right
\renewcommand{\left}{\mathopen{}\mathclose\bgroup\originalleft}
\renewcommand{\right}{\aftergroup\egroup\originalright}
\begin{document}

\begin{frontmatter}[classification=text]

\title{An algebraic inverse theorem for the quadratic Littlewood--Offord problem, and an application to Ramsey graphs} 

\author[mkwa]{Matthew Kwan\thanks{Research supported in part by SNSF project 178493.}}
\author[lsau]{Lisa Sauermann}

\global\long\def\RR{\mathbb{R}}%
\global\long\def\FF{\mathbb{F}}%
\global\long\def\QQ{\mathbb{Q}}%
\global\long\def\E{\mathbb{E}}%
\global\long\def\Var{\operatorname{Var}}%
\global\long\def\CC{\mathbb{C}}%
\global\long\def\NN{\mathbb{N}}%
\global\long\def\ZZ{\mathbb{Z}}%
\global\long\def\GG{\mathbb{G}}%
\global\long\def\tallphantom{\vphantom{\sum}}%
\global\long\def\tallerphantom{\vphantom{\int}}%
\global\long\def\supp{\operatorname{supp}}%
\global\long\def\one{\mathbbm{1}}%
\global\long\def\d{\operatorname{d}}%
\global\long\def\Unif{\operatorname{Unif}}%
\global\long\def\Po{\operatorname{Po}}%
\global\long\def\Bin{\operatorname{Bin}}%
\global\long\def\Ber{\operatorname{Ber}}%
\global\long\def\Geom{\operatorname{Geom}}%
\global\long\def\Rad{\operatorname{Rad}}%
\global\long\def\floor#1{\left\lfloor #1\right\rfloor }%
\global\long\def\ceil#1{\left\lceil #1\right\rceil }%
\global\long\def\falling#1#2{\left(#1\right)_{#2}}%
\global\long\def\cond{\,\middle|\,}%
\global\long\def\su{\subseteq}%
\global\long\def\row{\operatorname{row}}%
\global\long\def\col{\operatorname{col}}%
\global\long\def\spn{\operatorname{span}}%
\global\long\def\eps{\varepsilon}%
\global\long\def\matrixnorm{1}%
\global\long\def\polynorm{\mathrm C}%

\global\long\def\posreal{s}%
\global\long\def\zint{k}%

\begin{abstract}
\noindent Consider a quadratic polynomial $f\left(\xi_{1},\dots,\xi_{n}\right)$
of independent Bernoulli random variables. What can be said about
the concentration of $f$ on any single value? This generalises the
classical Littlewood--Offord problem, which asks the same question
for linear polynomials. As in the linear case, it is known that the point probabilities of $f$ can be as large as about $1/\sqrt{n}$, but still poorly understood is the ``inverse'' question of characterising the algebraic and arithmetic features $f$ must have if it has point probabilities comparable to this bound. In this paper we prove some results
of an algebraic flavour, showing that if $f$ has point probabilities
much larger than $1/n$ then it must be close to a quadratic form with low rank. We also give an application to Ramsey graphs, asymptotically answering a question of Kwan, Sudakov and Tran.
\end{abstract}
\end{frontmatter}

\section{Introduction}

Consider a random variable of the form $X=a_{1}\xi_{1}+\dots+a_{n}\xi_{n}$,
where $(a_{1},\dots,a_{n})\in\RR^{n}$ is
a sequence of real numbers and $\xi=(\xi_{1},\dots,\xi_{n})\sim\Rad^{n}$
is a sequence of independent Rademacher random variables (meaning that
$\Pr\left(\xi_{i}=1\right)=\Pr\left(\xi_{i}=-1\right)=1/2$ for each
$i$). Broadly speaking, the classical Littlewood--Offord problem
asks for \emph{anti-concentration} estimates for random variables
of this type: what can we say about the maximum probability that $X$
is equal to a single value, or falls in an interval of prescribed
length?

In connection with their work on random polynomials, Littlewood and
Offord~\cite{LO43} first proved that if each $\left|a_{i}\right|\ge1$,
then the small-ball probabilities $\Pr\left(\left|X-x\right|\le1\right)$,
for $x\in\RR$, are bounded by $O\left(\log n/\sqrt{n}\right)$. (here,
and for the rest of the paper, all asymptotics are as $n\to\infty$).
Erd\H os~\cite{Erd45} later proved the optimal upper bound $\binom{n}{\floor{n/2}}/2^{n}=O\left(1/\sqrt{n}\right)$,
and further work by Hal\'asz~\cite{Hal77}, Tao and Vu~\cite{TV10,TV09b},
Rudelson and Vershynin~\cite{RV08} and Nguyen and Vu~\cite{NV11}
explored the relationship between the concentration behaviour of $X$
and the arithmetic structure of the coefficients $(a_{1},\dots,a_{n})$. This work has had
far-reaching consequences: in particular, these Littlewood--Offord-type
theorems were essential tools in some of the landmark results in random
matrix theory (see for example \cite{TV09a,TV09b,Tik18}).

Observe that $a_{1}\xi_{1}+\dots+a_{n}\xi_{n}$ is a linear polynomial
in $\xi=(\xi_1,\dots,\xi_n)$, so a natural variation on the Littlewood--Offord problem
is to consider \emph{quadratic} polynomials in $\xi$. This direction
of research was popularised by Costello, Tao and Vu~\cite{CTV06}
in connection with their proof of Weiss' conjecture that a random
symmetric $\pm1$ matrix typically has full rank, and was further
explored by Costello~\cite{Cos13} and Nguyen~\cite{Ngu12} (higher-degree
polynomials were also considered by Rosi\'nski and Samorodnitsky~\cite{RS96},
Razborov and Viola~\cite{RV13}, Meka, Nguyen and Vu~\cite{MNV16},
and Fox, Kwan and Sauermann~\cite{FKS1}). Specifically, Costello~\cite{Cos13}
proved that if $f$ is a quadratic polynomial in $n$
variables with $\Theta\left(n^{2}\right)$ nonzero coefficients, then
$\Pr\left(f\left(\xi\right)=x\right)\le n^{o\left(1\right)-1/2}$.

The exponent of $1/2$ in Costello's result is best-possible, as can
be seen by considering the polynomial $\left(\xi_{1}+\dots+\xi_{n}\right)^{2}$.
However, it seems that for a ``typical'' quadratic polynomial
$f$ we should expect a much stronger bound. For example, if all
the coefficients of $f$ are integers of bounded size (and $\Theta\left(n^{2}\right)$
of them are non-zero), then the standard deviation of $f\left(\xi\right)$
is of order $\Theta\left(n\right)$. In this case it seems reasonable to assume
that ``typically'' the probability mass is roughly evenly distributed
over the integer points in a standard-deviation-sized interval around
the mean, yielding a bound of about $1/n$ for the point probabilities. Costello made a conjecture (see \cite[Conjecture~3]{Cos13}) to
this effect, suggesting that the only way $f\left(\xi\right)$ can
have point probabilities greater than $n^{\varepsilon-1}$, for any
constant $\varepsilon>0$, is if $f$ ``differs in only a few coefficients''
from a polynomial which splits into two linear factors. We remark
that Costello's paper was about polynomials with \emph{complex} coefficients,
and splitting over $\CC$ is a weaker property than splitting over
$\RR$. Nevertheless, Costello's conjecture appears to be plausible over
both $\RR$ and $\CC$.

To put Costello's conjecture in a wider context, an important goal for Littlewood--Offord-type problems is to prove \emph{inverse theorems}: in addition to proving general bounds on the maximum point probability, we would also like to understand the structural features exhibited whenever the random variable has a point probability close to this maximum. In the linear case, as previously mentioned, the point probabilities are only affected by the arithmetic structure of the multiset of coefficients $(a_{1},\dots,a_{n})$, and state-of-the-art inverse theorems due to Rudelson and Vershynin~\cite{RV08} and Nguyen and Vu~\cite{NV11} give a very refined understanding of the way this arithmetic structure is influenced by the maximum point probability (these results build on an earlier, coarser, inverse theorem due to Tao and Vu~\cite{TV09b}). However, in the quadratic case the point probabilities are influenced not only by the values of the coefficients, but also by how the different coefficients are arranged in the polynomial (for example, even the case where all the coefficients lie in $\{0,1\}$ is not well understood). Nguyen~\cite{Ngu12} proved a coarse inverse theorem (whose exact statement is too technical to reproduce here) showing that if, for a quadratic polynomial $f$, the maximum point probability of $f(\xi)$ is only polynomially small (that is, $\Pr(f(\xi)=x)\ge n^{-O(1)}$ for some $x$), then $f$ enjoys some algebraic and arithmetic structure. One can interpret Costello's conjecture as asking for a much more refined inverse theorem, albeit one that only takes algebraic structure into account.

In this paper, we prove some inverse theorems of a similar flavour to Costello's
conjecture, giving a connection between anti-concentration of $f\left(\xi\right)$ and algebraic properties of $f$. Roughly speaking, we prove that
if $f$ has concentration probability much larger than $1/n$ then
it must be close to a quadratic form with low rank\footnote{Recall that an $n$-variable \emph{quadratic form} over a field $\FF$ is a homogeneous quadratic polynomial $h\in \FF[x_1,\dots,x_n]$. If $\FF$ has characteristic not equal to 2, there is a unique representation $h(x)=x^T Q x$ with a symmetric matrix $Q\in \FF^{n\times n}$ (where $x$ denotes the column vector with entries $x_1,\dots,x_n$). The \emph{rank} of $h$ is defined to be the rank of this matrix $Q$. Equivalently, the rank of $h$ is the minimum $r$ such that there is a representation $h=\lambda_1 h_1^2+\dots+\lambda_r h_r^2$ as a linear combination of squares of homogeneous linear polynomials $h_1,\dots,h_r\in \FF[x_1,\dots,x_n]$.}.
Our first result is in terms of ``coefficient $L_{1}$ distance''.

\begin{thm}\label{thm:rank-L1}
Let $\FF\in \lbrace \CC,\RR, \QQ\rbrace$. For any integer $r\geq 3$, and any $0< \varepsilon\leq 1$, there is a constant $C=C(r,\varepsilon)$ such that the following holds. Let $f\in \FF[x_1,\dots,x_n]$ be a quadratic polynomial all of whose coefficients have absolute value at most $1$. Let $\xi=(\xi_1,\dots,\xi_n)\in \Rad^n$, and suppose that we have
\[\sup_{x\in \FF}\Pr\left(f(\xi)=x\right)\geq C\cdot \frac{\left(\log n\right)^{r/2}}{n^{1-2/\left(r+2\right)}}.\]
Then there is a quadratic form $h\in\FF\left[x_{1},\dots,x_{n}\right]$
of rank strictly less than $r$ such that the sum of the absolute values of the coefficients of $f-h$ is at most  $\varepsilon n^2$.
\end{thm}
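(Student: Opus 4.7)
The plan is to combine the Costello--Tao--Vu decoupling trick with a strong inverse theorem for the linear Littlewood--Offord problem (for instance, the Nguyen--Vu continuous inverse theorem), and then to convert the resulting arithmetic constraints on the quadratic matrix $A=(a_{ij})$ of $f$ into genuine algebraic low-rank structure.

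Writing $f(x)=x^{\top}Ax+b^{\top}x+c$ with a symmetric matrix $A$, the first step is to iterate a decoupling/Fourier bound of the schematic form
\[\sup_{x}\Pr(f(\xi)=x)^{4}\;\le\;C'\,\E_{\xi',\xi''}\sup_{y}\Pr_{\eta}(L_{\xi',\xi''}(\eta)=y),\]
where $L_{\xi',\xi''}(\eta)$ is the (random) linear form in a fresh Rademacher vector $\eta$ whose coefficient vector is essentially $A(\xi'-\xi'')$. The take-away is: if $\sup_{x}\Pr(f(\xi)=x)$ is much larger than $1/n$, then for a substantial fraction of choices of $(\xi',\xi'')$ the conditional linear form in $\eta$ is anti-concentrated much more than a generic linear form on $n$ Rademacher coordinates should be. One can then invoke a strong linear inverse Littlewood--Offord theorem (Nguyen--Vu), which forces the coefficient vector of any such strongly anti-concentrated linear form to lie, up to $o(n)$ bad coordinates, in a generalised arithmetic progression of bounded rank and polynomial volume. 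Hence for many Rademacher differences $u=\xi'-\xi''$, the image $Au$ lies approximately in such a small GAP; iterating the decoupling further, to leverage the full strength of the hypothesis $\rho\gg(\log n)^{r/2}/n^{r/(r+2)}$, should produce the stronger statement that for many tuples $(u_{1},\dots,u_{k})$ the images $Au_{1},\dots,Au_{k}$ simultaneously lie in such structured sets. The exponent $r/(r+2)$ and the $(\log n)^{r/2}$ loss should emerge from balancing the cost of repeated decoupling against the per-step loss in the linear inverse theorem.

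The main obstacle, and the genuinely new content of the theorem, is then to pass from these arithmetic-combinatorial constraints on many column images of $A$ to the \emph{algebraic} statement that $A$ itself is close in coefficient $L_{1}$ norm to a symmetric matrix of rank strictly less than $r$. The intuition is that if $A$ were far from every such low-rank matrix, then the distribution of $Au$ for Rademacher $u$ would spread out across genuinely more than $r-1$ directions, so that no low-rank GAP could absorb a positive fraction of its mass, contradicting the conclusion of the previous step. In practice I would expect this to be run as a rank-peeling induction on $r$: identify one direction $\ell$ along which $f(\xi)$ is unusually concentrated, subtract the corresponding rank-one piece $\lambda\ell^{2}$ from $f$, and reapply the theorem with parameter $r-1$ to the residual polynomial (whose point probabilities are at least as large as those of $f$, up to a routine conditioning argument). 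A final cleaning step folds the $o(n^{2})$ exceptional coefficients and the remaining linear part of $f-h$ into the $\varepsilon n^{2}$ error budget, producing the desired quadratic form $h$.
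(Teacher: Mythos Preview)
Your proposal has a genuine gap at the very first step. The Costello--Tao--Vu decoupling on point probabilities that you invoke,
\[
\sup_{x}\Pr(f(\xi)=x)^{4}\;\le\;C'\,\E_{\xi',\xi''}\sup_{y}\Pr_{\eta}(L_{\xi',\xi''}(\eta)=y),
\]
carries an unavoidable square-root (or fourth-root) loss, and this is fatal here: the target bound $n^{-1+2/(r+2)}$ is much smaller than $n^{-1/2}$ once $r\ge 3$, so after the loss you are trying to show that the linear form $L_{\xi',\xi''}$ has point probabilities $\gg n^{-4+8/(r+2)}$, which is vastly weaker than what is needed and in particular is satisfied by generic linear forms with bounded coefficients. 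No amount of iteration fixes this: each further decoupling step squares the loss again. The paper confronts this issue explicitly and replaces probability-level decoupling by decoupling on the \emph{characteristic function} $t\mapsto\E e^{2\pi i t f(\xi)}$, then feeds the resulting bound into Ess\'een's concentration inequality. The point is that the squaring now happens inside an integral over $t$, and because the characteristic function exhibits a sharp transition (close to $1$ for $|t|\ll 1/n$, close to $0$ for $|t|\gg 1/n$), taking a square root costs essentially nothing after integration. The linear input is then Hal\'asz's multi-dimensional Littlewood--Offord inequality, not an arithmetic inverse theorem.

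There is a second, independent gap in your passage from structure to low rank. The conclusion of the theorem is purely algebraic (closeness to a rank-$<r$ quadratic form), whereas the Nguyen--Vu inverse theorem outputs arithmetic structure (a low-rank GAP). These are not the same thing, and your sketch of converting one to the other via ``rank-peeling'' does not work as stated: after subtracting a rank-one piece $\lambda\ell^{2}$ there is no reason for the residual polynomial to retain large point probabilities, so the induction does not close. The paper instead introduces a robust notion of linear independence for the rows of the coefficient matrix $A$ (``$\varepsilon$-independence''), shows directly via the Fourier/Hal\'asz argument that large point probabilities force $A$ to have few disjoint $r$-tuples of $\varepsilon$-independent rows, and then proves a separate structural lemma: a bounded symmetric matrix without many such $r$-tuples must be $L_{1}$-close to a \emph{symmetric} matrix of rank $<r$. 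This last ``symmetric low-rank approximation'' step is where most of the work lies, and nothing in your outline addresses it.
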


Note that if $r$ is large, then the bound on the point probabilities in \cref{thm:rank-L1} is close to $1/n$. Also, note that we can rescale any quadratic polynomial $f\in \FF[x_1,\dots,x_n]$ so that all of its coefficients have absolute value at most 1, so \cref{thm:rank-L1} can be interpreted as giving a bound in terms of the largest coefficient of $f$. We remark that for the linear Littlewood--Offord problem, it essentially suffices to consider the case where the coefficients are
of bounded size, because if there are many coefficients with dramatically
different orders of magnitude, the point probabilities are small for
trivial reasons (Littlewood and Offord's original work~\cite{LO43}
proceeded along these lines). Unfortunately we were not able to find
such a reduction for the quadratic Littlewood--Offord problem.

In certain combinatorial applications, the coefficients of $f$ are integers of bounded size, or lie in some other bounded set of ``allowed coefficients''. For such polynomials, our next result gives a bound analogous to \cref{thm:rank-L1} where the quadratic form $h$ differs in only few coefficients from $f$ (so this version is in terms of ``coefficient Hamming distance'' as in Costello's conjecture, instead of ``coefficient $L_1$ distance'' as in \cref{thm:rank-L1}).

\begin{thm}
\label{thm:rank-edit}Let $\FF\in \lbrace \CC,\RR, \QQ\rbrace$. For any integer $r\geq 3$, any $0< \varepsilon\leq 1$, and any finite set $S\su \FF$, there is a constant $C=C(r,\varepsilon, S)$ such that the following holds. Let $f\in \FF[x_1,\dots,x_n]$ be a quadratic polynomial all of whose degree-2 coefficients are elements of the set $S$. Let $\xi=(\xi_1,\dots,\xi_n)\in \Rad^n$, and suppose that we have
\[\sup_{x\in \FF}\Pr\left(f(\xi)=x\right)\geq C\cdot \frac{\left(\log n\right)^{r/2}}{n^{1-2/\left(r+2\right)}}.\]
Then there is a quadratic form $h\in\FF\left[x_{1},\dots,x_{n}\right]$
of rank strictly less than $r$ such that $f$ and $h$ differ in at most $\varepsilon n^2$ coefficients.
\end{thm}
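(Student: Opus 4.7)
The plan is to derive \cref{thm:rank-edit} from \cref{thm:rank-L1} in three stages: first reduce to the degree-$2$ homogeneous part $f_{2}$ of $f$, then rescale and apply \cref{thm:rank-L1}, and finally upgrade the resulting $L_{1}$-closeness to Hamming-closeness by exploiting the finiteness of $S$.

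For the first stage, observe that since any quadratic form $h$ has no constant or linear terms, the degree-$\le 1$ coefficients of $f$ always contribute as disagreements between $f$ and $h$; there are only $n+1$ of them, at most $\varepsilon n^{2}/2$ for $n$ sufficiently large, so they may be safely ignored. However, \cref{thm:rank-L1} demands that \emph{all} coefficients of the input polynomial be bounded in absolute value by $1$, while \cref{thm:rank-edit} allows arbitrary constant and linear coefficients. To bridge this gap I would transfer the point-probability hypothesis from $f$ to $f_{2}$ via a symmetrization argument: using the identity $f(\xi)+f(-\xi)=2f_{2}(\xi)+2c$, or the pairing $\Pr(f(\xi)=f(\xi'))\ge\alpha^{2}$ (for an independent copy $\xi'$) followed by conditioning on symmetrized variables $\eta_{i}=\xi_{i}-\xi'_{i}$ and $\pi_{i}=\xi_{i}+\xi'_{i}$, one should be able to show that $\sup_{y}\Pr(f_{2}(\xi)=y)$ is bounded below by a comparable quantity (losing at most a polylogarithmic factor, which can be absorbed into $C$).

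For the second stage, set $M:=\max_{s\in S}|s|$; if $M=0$ then $f_{2}\equiv 0$ and $h=0$ works, so assume $M>0$. Apply \cref{thm:rank-L1} to the quadratic form $g:=f_{2}/M$, whose only nonzero coefficients are degree-$2$ and lie in $S/M\su[-1,1]$, hence satisfy the hypotheses of \cref{thm:rank-L1} for a suitably chosen $\varepsilon'>0$. This yields a quadratic form $h_{0}$ of rank strictly less than $r$ with $\sum_{i,j}\left|g_{ij}-(h_{0})_{ij}\right|\le\varepsilon'n^{2}$. Rescaling, $h:=Mh_{0}$ has rank less than $r$ and $\sum_{i,j}\left|a_{ij}-h_{ij}\right|\le M\varepsilon'n^{2}$, where $(a_{ij})$ denote the degree-$2$ coefficients of $f$.

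The third and hardest stage is converting this $L_{1}$-closeness into Hamming-closeness while preserving the rank bound. This is not automatic: $h$'s coefficients could each be a tiny perturbation of some $a_{ij}$, giving small $L_{1}$-distance but maximal Hamming-distance. The key leverage is the finiteness of $S$: setting $\delta:=\min\lbrace|s-s'|:s\ne s'\in S\rbrace>0$, at most $2M\varepsilon'n^{2}/\delta$ entries can have $|h_{ij}-a_{ij}|\ge\delta/2$. For the remaining entries, $h_{ij}$ lies within $\delta/2$ of the unique nearest element $a_{ij}\in S$. To snap $h$ onto exact agreement with $f_{2}$ at these entries without destroying the low-rank structure, I would exploit the explicit form of $h_{0}$ provided by (the proof of) \cref{thm:rank-L1}---typically $h_{0}=\sum_{k<r}\lambda_{k}\ell_{k}^{2}$ for a small number of linear forms $\ell_{k}$ with arithmetically structured coefficients---and round the parameters $\lambda_{k},\ell_{k}$ so that the coefficients of $Mh_{0}$ land in $S$ wherever they are already close. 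Taking $\varepsilon'=\varepsilon\delta/(4M)$ then yields the desired Hamming bound, with $C(r,\varepsilon,S)$ absorbing the dependence on $|S|$, $M$, and $\delta$. This rank-preserving rounding is the main technical difficulty, and accounts for the necessary dependence of the constant $C$ on $S$.
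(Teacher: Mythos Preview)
Your outline correctly identifies where the difficulty lies, but the proposed resolution of stage~3 does not go through, and the paper's proof is organised differently precisely to avoid this obstacle.

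The gap is in the ``rank-preserving rounding''. Writing $h_{0}=\sum_{k<r}\lambda_{k}\ell_{k}^{2}$ gives you $O(rn)$ free parameters, but you are trying to force $\Theta(n^{2})$ coefficients to land exactly in $S$. These constraints are massively overdetermined, and there is no reason a small perturbation of the $\lambda_{k}$ and of the coefficients of the $\ell_{k}$ should achieve exact agreement on all but $\varepsilon n^{2}$ entries. Knowing that $h_{ij}$ is within $\delta/2$ of $a_{ij}\in S$ for most $(i,j)$ does not help: snapping each $h_{ij}$ to its nearest element of $S$ destroys the rank bound, and rounding the parameters of the rank decomposition does not snap the entries. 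This is exactly the point you flagged as ``the main technical difficulty'', and it is not resolved by the sketch you give.

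The paper handles this by proving a strengthened intermediate result, \cref{thm:rank-L1-set}: under the hypotheses of \cref{thm:rank-edit}, there is a low-rank quadratic form $h$ that is $L_{1}$-close to the degree-$2$ part $\tilde f$ \emph{and whose coefficients all lie in a fixed finite set $S^{*}=S^{*}(r,S)$}. This is obtained not by applying \cref{thm:rank-L1} as a black box and then rounding, but by reopening its proof and tracking through the construction of $h$ (specifically, upgrading the low-rank symmetric approximation lemma \cref{lem-matrix-rank-close} to the set-controlled version \cref{lem-matrix-rank-close-set}). Once both $\tilde f$ and $h$ have coefficients in the finite set $S\cup S^{*}$, your minimum-gap argument with $\Delta=\min\{|s-s'|:s\ne s'\in S\cup S^{*}\}$ converts $L_{1}$-closeness to Hamming-closeness immediately: every nonzero coefficient of $\tilde f-h$ has absolute value at least $\Delta$, so there are at most $(\varepsilon/2)n^{2}$ of them if the $L_1$-bound is $(\varepsilon\Delta/2)n^2$, and the $n+1$ lower-order coefficients are absorbed for large $n$.

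A secondary remark on stage~1: the symmetrization you propose is also not worked out (and $f(\xi)-f(\xi')$ is a polynomial in $2n$ Rademacher variables, not a clean reduction to $f_{2}$). In fact it is unnecessary. The engine behind \cref{thm:rank-L1}, namely \cref{lem:key}, only looks at the degree-$2$ coefficient matrix, so the proof of \cref{thm:rank-L1-set} runs in parallel to that of \cref{thm:rank-L1} with no hypothesis on the linear or constant coefficients of $f$.
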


We remark that \cref{thm:rank-L1,thm:rank-edit} can in fact be used to
give anti-concentration estimates substantially stronger than $1/\sqrt{n}$ for
polynomials that are not close to factorising over $\CC$, as in Costello's
conjecture. If a complex quadratic form has rank at most 2, then it is a sum of two squares of linear forms. Over $\CC$, such forms
always split into linear factors. Therefore, applying \cref{thm:rank-L1,thm:rank-edit}
with $r=3$, we see that if the point probabilities of $f\left(\xi\right)$
are much larger than $n^{-3/5}$, then there is a quadratic form $h$,
close to $f$, which splits into linear factors over the complex numbers.

The proofs of \cref{thm:rank-L1,thm:rank-edit} involve a number of ideas and ingredients that may be independently interesting. In \cref{sec:outline} we will outline the proofs and discuss these ideas, but first we describe an application of \cref{thm:rank-edit} to anti-concentration of edge-statistics in Ramsey graphs, asymptotically answering a question of Kwan, Sudakov and Tran~\cite{KST}.

\subsection{Ramsey graphs}

An induced subgraph of a graph is said to be \emph{homogeneous} if
it is a clique or independent set. A classical result in Ramsey theory,
proved in 1935 by Erd\H os and Szekeres~\cite{ES35}, is that every
$n$-vertex graph has a homogeneous subgraph with at least $\frac{1}{2}\log_{2}n$
vertices. On the other hand, Erd\H os~\cite{Erd47} famously used
the probabilistic method to prove that, for all $n$, there exists
an $n$-vertex graph with no homogeneous subgraph on $2\log_{2}n$
vertices. Despite significant effort (see for example \cite{BRSW12,CZ16,Coh16,FW81,Li18}),
there are no known non-probabilistic constructions of graphs whose largest homogeneous subgraphs are of a comparable size.

Say an $n$-vertex graph is \emph{$C$-Ramsey} if it has no homogeneous
subgraph of size $C\log_{2}n$. It is widely believed that for any fixed constant $C$ all $C$-Ramsey graphs must in some sense resemble random graphs, and this belief has been supported by a number of theorems showing that certain ``richness'' properties characteristic of random graphs hold for all $C$-Ramsey graphs. The first result of this type was due to Erd\H os and Szemer\'edi~\cite{ES72}, who showed that every $C$-Ramsey graph $G$ has edge-density bounded away from zero and one. Note that this implies fairly strong information about the edge distribution on induced subgraphs of $G$, because any $n^{\alpha}$-vertex induced subgraph of an $n$-vertex $C$-Ramsey graph is itself $(C/\alpha)$-Ramsey.

This basic result was the foundation for a large amount of further
research on Ramsey graphs; over the years many conjectures have been proposed and resolved (see \cite{ABKS09,AK09,AKS03,BS07,Erd92,Erd97,EH77,KS17,KS17a,NST16,PR99,She98}).
In particular, we mention two results regarding the edge distribution
on induced subgraphs. First, solving a conjecture of Narayanan, Sahasrabudhe
and Tomon~\cite{NST16} (inspired by an old question of Erd\H os
and McKay~\cite{Erd92,Erd97}), Kwan and Sudakov~\cite{KS17a} proved
that for any $n$-vertex $C$-Ramsey graph there are
induced subgraphs with $\Omega\left(n^{2}\right)$ different numbers
of edges. Second, resolving a conjecture of Erd\H os, Faudree and
S\'os~\cite{Erd92,Erd97} (improving results of Alon and Kostochka~\cite{AK09}
and Alon, Balogh, Kostochka and Samotij~\cite{ABKS09}), Kwan and
Sudakov~\cite{KS17} also proved that every $n$-vertex $C$-Ramsey
graph has the property that for $\Omega\left(n\right)$ of the choices
$\ell\in\left\{ 0,\dots,n\right\} $, there are $\ell$-vertex induced
subgraphs with $\Omega\left(n^{3/2}\right)$ different numbers of
edges.

The aforementioned Erd\H os--Szemer\'edi theorem can be interpreted
as a (weak) ``concentration'' theorem: the numbers of edges in induced
subgraphs cannot be ``too extreme''. On the other hand, the two
results in the last paragraph point in the opposite direction: there
are many different possibilities for the numbers of edges in induced
subgraphs. In connection with some recent work on anti-concentration
of edge-statistics (see \cite{AHKT,FS,KST,MMNT}), Kwan, Sudakov and
Tran~\cite{KST} asked about anti-concentration of the edge distribution
in Ramsey graphs. Specifically, for an $n$-vertex $C$-Ramsey graph, let $X$ be the number of edges induced by a uniformly random set of (say) $n/2$ vertices. Is it true that $\Pr\left(X=x\right)=O\left(1/n\right)$
for all $x\in\NN$? If true, this would be best-possible, as can be
seen by considering a random graph $\GG\left(n,1/2\right)$. One of the motivations for this question was that better understanding of edge-statistics in Ramsey graphs could lead to a unified and more conceptual proof of the conjectures of Narayanan--Sahasrabudhe--Tomon and Erd\H os--Faudree--S\'os concerning induced subgraphs of Ramsey graphs with different numbers of edges. We discuss this further in \cref{sec:concluding}.

As an application of \cref{thm:rank-edit},
we answer Kwan, Sudakov and Tran's question asymptotically. Roughly speaking, we express
$X$ as a quadratic polynomial and show that Ramsey graphs are too
disordered for this polynomial to be close to a low-rank quadratic form.
\begin{thm}
\label{thm:ramsey-KST}The following holds for any fixed constants $C,c>0$. Let $G$ be an $n$-vertex $C$-Ramsey graph, and, for some $cn\le k\le\left(1-c\right)n$, let $X$ be the
number of edges induced by a uniformly random subset of $k$ vertices
of $G$. Then for any $x\in\ZZ$,
we have
\[
\Pr\left(X=x\right)\le n^{o\left(1\right)-1}.
\]
\end{thm}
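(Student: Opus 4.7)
My plan is to deduce \cref{thm:ramsey-KST} from \cref{thm:rank-edit} by reducing to a quadratic polynomial in independent Rademacher variables via a random-pairing trick, then exploiting the pseudorandomness of $C$-Ramsey graphs to rule out the low-rank conclusion that \cref{thm:rank-edit} would otherwise extract.

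For the reduction, assume $n$ is even (the odd case is a minor modification) and sample a uniformly random perfect matching $\pi$ of $[n]$ into pairs $(u_1,v_1),\dots,(u_m,v_m)$, independent of $S$. Call a pair $i$ \emph{split} if exactly one of $u_i,v_i$ lies in $S$, let $T\subseteq[m]$ denote the set of split pairs, and write $\xi_i=\one[u_i\in S]-\one[v_i\in S]\in\{-1,+1\}$ for $i\in T$. By symmetry of the uniform $k$-subset distribution, conditional on $\pi$, $T$, and the identities of the vertices in $S$ that lie in non-split pairs, the $(\xi_i)_{i\in T}$ are i.i.d.\ Rademacher. A Chernoff-type estimate gives $|T|\ge c'n$ with probability $1-e^{-\Omega(n)}$ for some $c'=c'(c)>0$. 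Substituting $\one[u_i\in S]=(1+\xi_i)/2$ and $\one[v_i\in S]=(1-\xi_i)/2$ yields
\[X\;=\;K+L(\xi)+\sum_{\{i,j\}\subseteq T}M_{ij}\,\xi_i\xi_j,\]
where $K,L$ depend only on the conditioning and $M_{ij}=\tfrac14(A_{u_iu_j}-A_{u_iv_j}-A_{v_iu_j}+A_{v_iv_j})\in\{0,\pm\tfrac14,\pm\tfrac12\}$, with $A$ the adjacency matrix of $G$. Since these degree-$2$ coefficients lie in a fixed finite set, \cref{thm:rank-edit} applies.

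Now fix $\delta>0$ arbitrary, choose $r\ge3$ with $2/(r+2)<\delta$, and choose $\varepsilon=\varepsilon(r,\delta,C,c)>0$ small (to be determined by the rigidity step below). The contrapositive of \cref{thm:rank-edit} says that if $|T|\ge c'n$ and the coefficient matrix $M$ has Hamming distance greater than $\varepsilon|T|^2$ from every rank-$(r{-}1)$ real symmetric matrix, then $\Pr(X=x\mid\text{conditioning})\le n^{\delta-1}$. Combined with $\Pr(|T|<c'n)\le e^{-\Omega(n)}$, the desired bound reduces to showing that a \emph{bad} conditioning --- one with $|T|\ge c'n$ but where $M$ is Hamming-close to some rank-$(r{-}1)$ symmetric matrix --- occurs with probability at most $n^{\delta-1}$.

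The hard part is this last step, a Ramsey ``low-rank rigidity'' estimate, and it is where the $C$-Ramsey hypothesis is used. My plan is an $\varepsilon$-net / union-bound argument: the symmetric rank-$(r{-}1)$ matrices with bounded entries form an $O(r|T|)$-dimensional variety that admits a net of size $\exp(O(r|T|\log|T|))$ fine enough to capture Hamming-closeness, so it suffices to show that for each fixed net element $N$,
\[\Pr_\pi\bigl(\mathrm{Ham}(M(\pi),N)\le\varepsilon|T|^2\bigr)\;\le\;\exp(-c''|T|^2),\]
for some $c''=c''(\varepsilon,C)>0$. This should follow from a ``4-tuple pseudorandomness'' property of $C$-Ramsey graphs: for most quadruples $(a,b,c,d)$ of distinct vertices, the value $A_{ab}-A_{ad}-A_{cb}+A_{cd}$ is not concentrated on any single value, a property extractable from structural results on Ramsey graphs in the style of Pr\"omel--R\"odl or of Kwan--Sudakov~\cite{KS17,KS17a}. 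The most delicate point, which I expect to be the main technical hurdle, is that $\pi$ is a matching rather than a sequence of independent samples, so the entries $M_{ij}$ are correlated through the global constraint on $\pi$; managing this correlation --- for instance via a bounded-differences inequality on a switching-based exchangeability structure for random matchings, or by further conditioning on a small auxiliary ``skeleton'' of $\pi$ that decouples most $4$-tuples --- is the crux of the argument.
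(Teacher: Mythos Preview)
Your reduction to a quadratic polynomial in Rademacher variables is essentially the paper's (the paper samples a permutation $\pi$ and Rademachers $\xi$ first and \emph{defines} $U_{\pi,\xi}$ from them, which avoids your random set $T$ and gives exactly $m=\min(k,n-k)$ Rademachers after conditioning on $\pi$; but your matching trick is morally the same, and the coefficient formula $M_{ij}=\tfrac14(A_{u_iu_j}-A_{u_iv_j}-A_{v_iu_j}+A_{v_iv_j})$ is exactly what the paper gets).

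The genuine gap is in your ``hard part''. Your $\varepsilon$-net/union-bound plan cannot work as stated: the net of rank-$(r-1)$ symmetric matrices has size $\exp(\Theta(n\log n))$, but the random matching $\pi$ has only $(n-1)!!=\exp(O(n\log n))$ outcomes (and $S$ contributes only another $\exp(O(n))$), so any nonzero event has probability at least $\exp(-O(n\log n))$. You therefore cannot possibly establish the individual bound $\Pr_\pi(\mathrm{Ham}(M(\pi),N)\le\varepsilon|T|^2)\le\exp(-c''|T|^2)=\exp(-\Omega(n^2))$ that your union bound requires, unless every such event is empty --- which would be a deterministic statement making the probabilistic framing moot. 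The ``bounded-differences over random matchings'' idea runs into the same wall: no concentration inequality can produce tails smaller than the minimum nonzero probability.

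The paper's route is both different and much simpler. Rather than ruling out each low-rank approximation one by one, it shows directly (with probability $1-e^{-\Omega(n)}$ over $\pi$) that the coefficient matrix has $\Omega(m^{2r})$ invertible $r\times r$ submatrices, and then observes the trivial deterministic fact that changing $\le\varepsilon m^2$ entries can kill at most $\varepsilon m^{2r}$ of these submatrices, so one survives and the rank stays at least $r$. The Ramsey hypothesis enters only through a quantitative Erd\H os--Hajnal-type statement: a $C$-Ramsey graph contains $\Omega(n^{4r})$ induced copies of a perfect matching on $4r$ vertices (this follows from Szemer\'edi regularity plus the Erd\H os--Szemer\'edi density bound). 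Each such induced matching, when its $4r$ vertices land in the positions $\pi(i_\ell),\pi(i_\ell+m),\pi(j_\ell),\pi(j_\ell+m)$, forces the $r\times r$ block $(M_{i_\ell j_q})_{\ell,q}$ to be diagonal with entries $1/4$, hence invertible. A McDiarmid inequality for random permutations then shows this happens for $\Omega(m^{2r})$ index tuples with high probability. No net, no delicate decoupling of $4$-tuple correlations --- the ``4-tuple pseudorandomness'' you were reaching for is replaced by a clean subgraph-counting input.
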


In \cref{sec:concluding} we discuss a further related conjecture, and some connections between this line of research and some older conjectures about Ramsey graphs.

\subsection{Outline of the paper and the proofs}\label{sec:outline}

The structure of the paper is as follows. First, in \cref{sec:ramsey} we present the deduction of \cref{thm:ramsey-KST} (our result about Ramsey graphs) from \cref{thm:rank-edit}. This illustrates
some ideas that might be more generally useful in other applications
of \cref{thm:rank-L1,thm:rank-edit}. Via a coupling trick (\cref{lem:KST-coupling}), our random variable $X$ can be represented
as a certain quadratic polynomial. To apply \cref{thm:rank-edit} we then need to show
that a certain $n\times n$ matrix $M$ corresponding to this quadratic
polynomial (defined in terms of a Ramsey graph) is far from being
low-rank, in the sense that for any fixed $r\in\NN$, changing any
$o\left(n^{2}\right)$ entries of $M$ results in a matrix with rank
at least $r$. We observe that, to this end, it suffices to show that
for any $r\in\NN$, our matrix $M$ contains $\Omega\left(n^{2r}\right)$
invertible $r\times r$ submatrices. This will follow from a simple
generalisation (\cref{lem-many-copies}) of an old result due to Erd\H os and Hajnal: for fixed
$h$ and $C$, every $C$-Ramsey graph contains $\Omega\left(n^{h}\right)$
copies of every possible $h$-vertex induced subgraph.

Next, in \cref{sec:main-lemma} we state and prove an anti-concentration inequality for real quadratic
polynomials satisfying a certain technical non-degeneracy condition (\cref{lem:key}). This will be a key ingredient in the proofs of \cref{thm:rank-L1,thm:rank-edit}. The main idea that allows us to prove bounds stronger than $1/\sqrt{n}$ is
a decoupling trick applied to the characteristic function (Fourier
transform) of our random variable of interest. In circumstances where the characteristic function decays in a ``Gaussian-like'' way,
this allows us to reduce from the quadratic case to the linear case
without incurring the square-root loss that is usually associated
with decoupling tricks of this type.

In \cref{sec:complex-to-real} we state and prove a lemma concerning ``real projections'' of complex matrices (\cref{lem:L1-real-to-complex}), which essentially allows us to deduce the complex cases of \cref{thm:rank-L1,thm:rank-edit} from the real cases. This lemma may also be of independent interest. To illustrate, a special case is the fact that for any nonsingular complex matrix $A$, there is a phase
$\theta\in\left[-\pi,\pi\right]$ such that $\Re\left(e^{i\theta}A\right)$
is nonsingular.

Then, in \cref{sec:rank} we outline how to deduce \cref{thm:rank-L1,thm:rank-edit} from \cref{lem:key,lem:L1-real-to-complex}. The main step of the deduction is to show that quadratic polynomials which are far from
a low-rank quadratic form satisfy the technical non-degeneracy condition of \cref{lem:key}. As it happens, this step is more challenging than it may first seem; it basically amounts to proving that if a matrix is close to being symmetric, and close to having low rank, then it is close to a matrix
that is simultaneously symmetric and has low rank. This fact about ``symmetric low-rank approximation'' is encapsulated in \cref{lem-matrix-rank-close,lem-matrix-rank-close-set} (there are slightly different versions for the proofs of \cref{thm:rank-L1,thm:rank-edit}), and most of the rest of the paper is devoted to proving these lemmas. Indeed, in \cref{sec:aux} we prove some general-purpose lemmas about a certain notion of ``robust linear independence'', and in \cref{sec:lem-matrix-rank-close} we use these lemmas to prove \cref{lem-matrix-rank-close,lem-matrix-rank-close-set}.

Finally, \cref{sec:concluding} contains some concluding remarks. In particular, we
present a new conjecture about edge-statistics in Ramsey graphs, generalising
\cref{thm:ramsey-KST}, which would imply the conjectures of Erd\H os--Faudree--S\'os
and Narayanan--Sahasrabudhe--Tomon regarding subgraphs of Ramsey
graphs with different numbers of edges.

\subsection{Notation}

We use standard asymptotic notation throughout. For functions $f=f\left(n\right)$
and $g=g\left(n\right)$ we write $f=O\left(g\right)$ to mean there
is a constant $C$ such that $\left|f\right|\le C\left|g\right|$,
we write $f=\Omega\left(g\right)$ to mean there is a constant $c>0$
such that $f\ge c\left|g\right|$ for sufficiently large $n$, we
write $f=\Theta\left(g\right)$ to mean that $f=O\left(g\right)$
and $f=\Omega\left(g\right)$, and we write $f=o\left(g\right)$ or
$g=\omega\left(f\right)$ to mean that $f/g\to0$ as $n\to\infty$.
All asymptotics are as $n\to\infty$ unless specified otherwise.

For a non-negative integer $n$ we define $[n]=\lbrace 1,\dots, n\rbrace$, and for a real number $x$, the floor function is denoted $\floor x=\max\left\{ i\in\ZZ: i\le x\right\} $. For a vector $v\in \CC^n$ or a matrix $A\in \CC^{m\times n}$, we let $\Re(v)\in \RR^n$ and $\Re(A)\in \RR^{m\times n}$ denote the vector or matrix obtained by taking the real part of each entry. We adopt the convention that the determinant of the $0\times 0$ empty matrix is 1. All logarithms are in base $e$, unless explicitly noted otherwise.

We also use standard graph-theoretic notation. Given a graph $G$, we denote its vertex set by $V(G)$. For subsets $X,Y\su V(G)$, let $e(X)$ denote the number of edges inside $X$, and let $e(X,Y)$ denote the number of edges between $X$ and $Y$. Let $d(X)=e(X)/\binom{|X|}{2}$ and $d(X,Y)=e(X,Y)/(|X||Y|)$ denote the density of edges inside $X$, and between $X$ and $Y$, respectively. Abusing notation, we write $e(x,Y)$ or $e(x,y)$ to denote $e(\{x\},Y)$ or $e(\{x\},\{y\})$, respectively.

Given a field $\FF$ and non-negative integers $m$ and $n$, let $\FF^{m\times n}$ denote the set of all $m\times n$ matrices with entries in $\FF$. For a matrix $A\in \FF^{m\times n}$, for $i=1,\dots,m$ we write $\row_i(A)\in \FF^n$ for the vector corresponding to the $i$-th row of $A$, and for $j=1,\dots,n$ we write $\col_j(A)\in \FF^m$ for the vector corresponding to the $j$-th column. Given a matrix $A\in \FF^{m\times n}$ and a subset $I\su [n]$, let $A_I$ be the $m\times |I|$ submatrix of $A$ consisting of the columns with indices in $I$.  Similarly, for a vector $v\in \FF^n$ and a subset $I\su [n]$, let $v_I\in \FF^I$ be the vector consisting of the entries of $v$ with indices in $I$.

For a matrix $A\in \CC^{m\times n}$, we denote the sum of the absolute values of the entries of $A$ by $\Vert A\Vert_1$, and we denote the maximum of the absolute values of the entries by $\Vert A\Vert_{\infty}$ (these are entrywise norms of $A$, not to be confused with the more common operator norms). For a vector $v\in \CC^n$, we denote the usual $L_p$-norm by $\Vert v\Vert_p$, and for vectors $v,w\in \RR^n$ we write $\langle v,w\rangle$ for the standard inner product of $v$ and $w$.

\section{Anti-concentration in Ramsey graphs\label{sec:ramsey}}

In this section we deduce \cref{thm:ramsey-KST} from \cref{thm:rank-edit}. Our plan will be to express the random variable $X$ in \cref{thm:ramsey-KST} as a quadratic polynomial of independent Rademacher random variables. We can obtain an upper bound on the point probabilities of this polynomial from \cref{thm:rank-edit} if we can show that our quadratic polynomial is not close to a low-rank quadratic form. In order to do so, we will apply the following simple lemma to the matrix associated with the homogeneous degree-2 part of the polynomial.

\begin{lem}
\label{lem:many-full-rank}
Let $r$ be a positive integer, let $\delta\geq 0$, and let $M$ be an $m\times m$ matrix over any field which has more than $\delta m^{2r}$ full-rank $r\times r$ submatrices. Then, if we change up to $\delta m^2$ entries of $M$, the resulting matrix has rank at least $r$.
\end{lem}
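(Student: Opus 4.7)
My plan is a straightforward union bound / double-counting argument, exploiting the fact that each matrix entry lies in relatively few $r \times r$ submatrices.

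The key observation is that if the modified matrix $M'$ agrees with $M$ on some $r \times r$ submatrix, and that submatrix has full rank in $M$, then it still has full rank in $M'$, which forces $\operatorname{rank}(M') \ge r$. So it suffices to show that if at most $\delta m^2$ entries are changed, then at least one of the $> \delta m^{2r}$ full-rank $r \times r$ submatrices of $M$ avoids every changed entry.

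To count $r \times r$ submatrices that contain at least one changed entry, I would note that each fixed entry $(i,j)$ of $M$ lies in exactly $\binom{m-1}{r-1}^2$ many $r \times r$ submatrices (choose $r-1$ of the remaining rows and $r-1$ of the remaining columns). Therefore, by a union bound over the at most $\delta m^2$ changed entries, the number of $r \times r$ submatrices meeting at least one changed entry is at most
\[
\delta m^2 \cdot \binom{m-1}{r-1}^2 \le \delta m^2 \cdot m^{2(r-1)} = \delta m^{2r}.
\]
Since $M$ has strictly more than $\delta m^{2r}$ full-rank $r \times r$ submatrices, at least one such submatrix avoids every changed entry, and this submatrix remains full-rank in the modified matrix, yielding $\operatorname{rank}(M') \ge r$.

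There is no real obstacle here: the argument is essentially a one-line pigeonhole once the correct counting is set up. The only thing to be a little careful about is the bound $\binom{m-1}{r-1} \le m^{r-1}$, which holds trivially (even without dividing by $(r-1)!$). The lemma does not depend on the underlying field, since we only use that full rank of a submatrix is preserved when its entries are not altered.
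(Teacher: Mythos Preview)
Your proof is correct and follows essentially the same approach as the paper: both argue that each changed entry affects at most $m^{2(r-1)}$ of the $r\times r$ submatrices, so at most $\delta m^{2r}$ submatrices are affected and a full-rank one survives. Your version is slightly more explicit in computing $\binom{m-1}{r-1}^2$ before bounding it by $m^{2(r-1)}$, but the argument is otherwise identical.
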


\begin{proof}
If we change at most $\delta m^2$ entries, we can affect at most $\delta m^2 m^{2(r-1)}=\delta m^{2r}$ of the $r\times r$ submatrices of $M$, so a full-rank $r\times r$ submatrix remains.
\end{proof}

Next, the only fact about Ramsey graphs we will need is the fact that they have many copies of every possible induced subgraph on a small number of vertices. This generalises an old result of Erd\H{o}s and Hajnal~\cite{EH77}, which asserts the existence of at least one copy of each such subgraph.

\begin{lem}\label{lem-many-copies}
For any fixed $h\geq 1$ and any fixed constant $C>0$, there is $\delta=\delta(h,C)>0$ such that the following holds for sufficiently large $n$. Every $n$-vertex $C$-Ramsey graph $G$ contains at least $\delta n^h$ induced copies of every graph $H$ on $h$ vertices.
\end{lem}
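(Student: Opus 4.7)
The plan is to construct an induced labelled copy of $H$ in $G$ vertex by vertex, showing that at each step there are $\Omega(n)$ valid extensions. Multiplying across the $h$ steps will yield $\Omega(n^h)$ labelled induced copies of $H$, and dividing by the (constant) number of automorphisms of $H$ then gives the claimed $\Omega(n^h)$ induced copies.

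Label $V(H)=[h]$. For each partial labelled copy $(v_1,\ldots,v_i)\in V(G)^i$ of $H[\{1,\ldots,i\}]$ and each $j>i$, let $W_j(v_1,\ldots,v_i)\subseteq V(G)\setminus\{v_1,\ldots,v_i\}$ denote the set of vertices $v$ with $vv_\ell\in E(G)\iff j\ell\in E(H)$ for all $\ell\leq i$; this is the set of valid candidates for the $j$-th vertex of the copy given the first $i$. At step $i+1$ I pick $v_{i+1}$ from $W_{i+1}(v_1,\ldots,v_i)$, aiming to do so in a way that maintains $|W_j(v_1,\ldots,v_{i+1})|=\Omega(n)$ for every $j>i+1$, and with $\Omega(n)$ valid choices of $v_{i+1}$.

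The main input is the Erd\H os--Szemer\'edi density theorem~\cite{ES72}: every $C'$-Ramsey graph on $m$ vertices has edge density in $[\varepsilon_0(C'),1-\varepsilon_0(C')]$ for some $\varepsilon_0(C')>0$. From this I derive a \emph{balanced neighbourhood} consequence: every $C'$-Ramsey graph on $m$ vertices has $\Omega(m)$ vertices whose neighbourhood and non-neighbourhood each have size $\Omega(m)$. Indeed, if more than $m/3$ vertices had degree below $\eta m$ for some small $\eta$, the induced subgraph on these low-degree vertices would still inherit a Ramsey-type property (since it has at least $m/3$ vertices), hence would have edge density at least $\varepsilon_0(C'')>0$ for a suitable $C''=C''(C')$ by Erd\H os--Szemer\'edi; but the low-degree condition forces its density to be at most $3\eta$, giving a contradiction for $\eta$ small. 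A symmetric argument rules out too many high-degree vertices.

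The main obstacle is that updating $W_j$ from step $i$ to step $i+1$ amounts to intersecting $W_j$ with either $N(v_{i+1})$ or its complement, so we need $v_{i+1}$'s adjacency to $W_j$ to be suitably balanced. When $W_j=W_{i+1}$ this is a within-set statement handled directly by applying the balanced neighbourhood consequence inside $G[W_{i+1}]$ (which is itself $(C+o(1))$-Ramsey since $|W_{i+1}|$ is a constant fraction of $n$). In general, however, $W_j$ and $W_{i+1}$ are distinct classes of the adjacency partition of $V(G)$ determined by $\{v_1,\ldots,v_i\}$, and we need a \emph{bipartite} balance between them; bipartite densities in $C$-Ramsey graphs need not be bounded away from $0$ and $1$. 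Overcoming this technical issue, by carefully choosing $v_{i+1}$ to be compatible with all the ``needed sides'' for $j=i+2,\ldots,h$ simultaneously and by further applying Erd\H os--Szemer\'edi to restricted subgraphs, is the crux of the proof.
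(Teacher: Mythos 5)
Your proposal correctly identifies the key obstacle but stops short of resolving it, so it has a genuine gap. You observe, correctly, that the greedy vertex-by-vertex embedding requires $v_{i+1}\in W_{i+1}$ to have balanced adjacency to each of the other candidate sets $W_j$, and that this is a bipartite density condition which the Erd\H{o}s--Szemer\'edi theorem does \emph{not} control: bipartite densities between two linear-sized subsets of a $C$-Ramsey graph can indeed be arbitrarily close to $0$ or $1$ (for instance, the disjoint union of two random graphs on $n/2$ vertices is $O(1)$-Ramsey but has bipartite density $0$ between the halves). You then write that overcoming this ``is the crux of the proof,'' but you do not actually supply an argument, and the balanced-neighbourhood lemma you derive is a within-set statement that does not deliver the bipartite control you need. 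As written, the greedy scheme can fail at step $i+1$: it is conceivable that for \emph{every} $v\in W_{i+1}$, some required $W_j\cap N(v)$ (or $W_j\setminus N(v)$) is $o(n)$, so there are no good extensions.

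The paper's proof sidesteps exactly this issue by first applying Szemer\'edi's regularity lemma to extract an $\varepsilon$-regular induced subgraph $G[U]$ on $\Omega(n)$ vertices. Inside such a subgraph, \emph{all} bipartite densities between linear-sized vertex subsets are within $\varepsilon$ of the overall density $d(U)$, and Erd\H{o}s--Szemer\'edi (applied to $G[U]$, which is still $(C+o(1))$-Ramsey) bounds $d(U)$ away from $0$ and $1$. Once that uniformity is in place, the greedy embedding you describe is exactly the standard counting lemma for $\varepsilon$-regular graphs, and it goes through without difficulty. So your instinct about the structure of the argument is right, but the regularity step (or some replacement for it) is essential and missing. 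If you want to complete your proof along these lines, you should either invoke the regularity lemma as the paper does, or else find a direct argument that a Ramsey graph contains a linear-sized induced subgraph in which bipartite densities between large sets are under control; without such a step, the claim that $\Omega(n)$ valid choices of $v_{i+1}$ exist at each stage is unjustified.
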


\begin{proof}
A graph is said to be $\eps$-regular if for all subsets $X, Y\su V(G)$ with $\vert X\vert\geq \eps \vert V(G)\vert$ and $\vert Y\vert\geq \eps \vert V(G)\vert$, we have $|d(X,Y)-d(V(G))|\le\eps$. It is a consequence of Szemer\'edi's regularity lemma that for any fixed $\eps>0$, every $n$-vertex graph $G$ contains an $\eps$-regular induced subgraph $G[U]$ on $m=\Omega(n)$ vertices (see also \cite[Lemma~5.2]{CF12} for a version of this fact with better dependence on $\eps$). Now, if $G$ is $C$-Ramsey, then $G[U]$ is still $(C+o(1))$-Ramsey, so by the theorem of Erd\H{o}s and Szemer\'{e}di~\cite{ES72} mentioned in the introduction, the density of $G[U]$ is bounded away from zero and one: that is, there is $\eta>0$ depending only on $C$ such that $\eta\le d(U)\le 1-\eta$ for sufficiently large $n$.

Then, provided $\eps$ is sufficiently small with respect to $\eta$, we can conclude the proof by applying a counting lemma to $G[U]$ (see for example \cite[Lemma 5.12]{CF12}): if an $m$-vertex graph has density bounded away from zero and one, and it is $\eps$-regular for sufficiently small $\eps$, then it has $\Omega(m^h)$ induced copies of every graph $H$ on $h$ vertices. Thus, we obtain $\Omega(m^h)=\Omega(n^h)$ induced copies of $H$ in $G[U]$ and therefore in $G$.
\end{proof}

Another crucial ingredient is a variant of \cite[Lemma~2.8]{KST}, to express the random variable $X$ in \cref{thm:ramsey-KST} as a quadratic polynomial of independent random variables. Consider any graph $G$ with vertex set $[n]$ and any $0\leq k\leq n$, and let $m=\min(k,n-k)$. First, we want a way to generate a random $k$-vertex subset of $G$ in a way which involves independent random choices. Let $\pi$ be a uniformly random permutation of $[n]$ and let $\xi=(\xi_1,\dots,\xi_m)\sim\Rad^m$ be a sequence of independent Rademacher random variables (also independent from $\pi$). Note that
\begin{equation}\label{eq:U-pi-xi}
U_{\pi,\xi}=\lbrace \pi(i): i\in [m], \xi_i=1\rbrace\cup \lbrace \pi(i+m): i\in [m], \xi_i=-1\rbrace\cup \lbrace \pi(i): 2m+1\leq i\leq m+k\rbrace
\end{equation}
is a uniformly random subset of $k$ vertices of $G$. Indeed, the union of the first two sets on the right-hand side has size $m$, the third set has size $k-m\geq 0$, and all three sets are disjoint.

Now, recall that for two vertices $v,w\in V(G)$, we defined $e(v,w)=1$ if there is an edge between $v$ and $w$ and $e(v,w)=0$ otherwise. For any fixed outcome of $\pi$, the number of edges in $U=U_{\pi,\xi}$ is
\begin{equation}\label{eq:number-edges}
e(U_{\pi,\xi})=\sum_{1\leq i<j\leq n}\mathbf{1}_{\pi(i)\in U}\mathbf{1}_{\pi(j)\in U}e(\pi(i),\pi(j)).
\end{equation}
Note that by the definition of $U=U_{\pi,\xi}$, we have
\[\mathbf{1}_{\pi(i)\in U}=
\begin{cases}
\frac{1}{2}(1+\xi_i)&\text{if }1\leq i\leq m\\
\frac{1}{2}(1-\xi_{i-m})&\text{if }m+1\leq i\leq 2m\\
1&\text{if }2m+1\leq i\leq m+\ell\\
0&\text{if }m+\ell+1\leq i\leq n\\
\end{cases}.\]
Plugging this into \cref{eq:number-edges}, and using that $\xi_i^2=1$ for all $i$, we obtain the following lemma.

\begin{lem}
\label{lem:KST-coupling}
Let $G$ be a graph with vertex set $[n]$. Furthermore, let $0\leq k\leq n$ and $m=\min(k,n-k)$. Let $\pi$ be a random permutation of $[n]$, and $\xi=(\xi_1,\dots,\xi_m)\sim\Rad^m$ be a sequence of independent Rademacher random variables, and define $U_{\pi,\xi}\su V(G)$ as in \cref{eq:U-pi-xi}. Then $U_{\pi,\xi}$ is a uniformly random  subset of $k$ vertices of $G$. Furthermore we can write 
\[e(U_{\pi,\xi})=f_\pi(\xi)=\sum_{1\leq i<j\leq m}a_{ij}\xi_i\xi_j+\sum_{1\leq i\leq m}a_i\xi_i+a_0,\]
where the coefficients $a_{ij}$, $a_i$ and $a_0$ of the quadratic polynomial $f_\pi$ only depend on $\pi$ (and not on $\xi$). In addition, for $1\le i<j\le m$ we have
\begin{equation}\label{eq-value-a-ij}
a_{ij}=\frac{1}{4}e(\pi(i),\pi(j))-\frac{1}{4}e(\pi(i),\pi(j+m))-\frac{1}{4}e(\pi(i+m),\pi(j))+\frac{1}{4}e(\pi(i+m),\pi(j+m)).
\end{equation}
\end{lem}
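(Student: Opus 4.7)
The proof splits into the distributional claim that $U_{\pi,\xi}$ is uniformly distributed over $k$-subsets of $[n]$, and the algebraic claim that $e(U_{\pi,\xi})$ has the stated quadratic form with the stated coefficients $a_{ij}$.

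For the distributional part, I would condition on $\xi$. For any fixed $\xi\in\{-1,1\}^m$, define
\[I_\xi=\{i\in[m]:\xi_i=1\}\cup\{i+m:i\in[m],\ \xi_i=-1\}\cup\{2m+1,\dots,m+k\}\su[n].\]
The three sets on the right are pairwise disjoint and their sizes sum to $m+(k-m)=k$, so $I_\xi$ is a $k$-subset of $[n]$ that depends only on $\xi$. Since $U_{\pi,\xi}=\pi(I_\xi)$ and $\pi$ is a uniformly random permutation of $[n]$ independent of $\xi$, conditional on $\xi$ the set $\pi(I_\xi)$ is a uniformly random $k$-subset of $[n]$. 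As this conditional distribution does not depend on $\xi$, the unconditional distribution of $U_{\pi,\xi}$ is also uniform.

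For the algebraic part, the plan is to substitute the piecewise formulas for $\mathbf{1}_{\pi(i)\in U}$ into \cref{eq:number-edges} and expand. Pairs $(i,j)$ with $\max(i,j)>m+k$ contribute zero, so the sum reduces to six cases depending on where $i$ and $j$ lie among the ranges $[1,m]$, $[m+1,2m]$, $[2m+1,m+k]$. In each case the contribution is $e(\pi(i),\pi(j))$ times a product of at most two affine factors in the $\xi_\ell$'s, so after expansion the total is a polynomial in $\xi$ of degree at most two. The identity $\xi_\ell^2=1$ collapses any doubled-variable terms that arise; in particular, diagonal pairs of the form $(i,i+m)$ contribute $\tfrac{1}{4}(1+\xi_i)(1-\xi_i)e(\pi(i),\pi(i+m))=0$. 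Regrouping the remaining monomials yields the stated form $f_\pi(\xi)=\sum_{i<j}a_{ij}\xi_i\xi_j+\sum_i a_i\xi_i+a_0$ with coefficients depending only on $\pi$ and the edge set of $G$.

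Finally, to extract the explicit formula for $a_{ij}$ with $1\le i<j\le m$, I would identify precisely which pairs $(i',j')\in[n]^2$ with $i'<j'$ contribute a $\xi_i\xi_j$ term after expansion. This happens only if $\{i',j'\}$ contains exactly one element of $\{i,i+m\}$ (contributing $\xi_i$) and one of $\{j,j+m\}$ (contributing $\xi_j$), giving four candidate pairs: $(i,j)$, $(i,j+m)$, $(j,i+m)$ (since $j\le m<i+m$), and $(i+m,j+m)$. Each contributes $\tfrac{1}{4}e(\pi(i'),\pi(j'))$ with a sign determined by whether each of the two relevant indicators has the form $(1+\xi)/2$ or $(1-\xi)/2$; these signs work out to $+,-,-,+$. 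Using the symmetry $e(\pi(j),\pi(i+m))=e(\pi(i+m),\pi(j))$ in the third contribution and summing then gives exactly \cref{eq-value-a-ij}. No serious obstacle arises; the argument is a direct calculation whose only point of care is the sign bookkeeping across the four cases.
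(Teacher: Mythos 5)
Your proposal is correct and follows essentially the same route as the paper, which establishes the lemma by noting the three sets defining $U_{\pi,\xi}$ are disjoint with sizes summing to $k$, writing the piecewise formula for $\mathbf{1}_{\pi(i)\in U}$, and substituting into \cref{eq:number-edges}. You flesh out the uniformity step (by conditioning on $\xi$ and using independence of $\pi$) and spell out the sign bookkeeping for $a_{ij}$ more explicitly, but the underlying calculation is identical.
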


Note that \cref{eq-value-a-ij} in particular implies that $a_{ij}\in \lbrace -\frac{1}{2}, -\frac{1}{4}, 0, \frac{1}{4}, \frac{1}{2}\rbrace$ for all $i<j$. One can also give explicit formulas for the other coefficients $a_i$ and $a_0$ of $f_\pi$, but this is not necessary for our argument.

Now we put everything together to prove \cref{thm:ramsey-KST}.

\begin{proof}
[Proof of \cref{thm:ramsey-KST}]
Fix some $r\ge 3$, which we treat as a constant in all asymptotic notation. We will prove that $\Pr(X=x)\le n^{-1+2/(r+2)+o(1)}$. Since $r$ was arbitrary, this suffices to prove \cref{thm:ramsey-KST}.

Let $G$ be an $n$-vertex $C$-Ramsey graph, and let $cn\le k\le (1-c)n$. As before, define $m=\min(k,n-k)$ and note that $cn\leq m\leq n/2$. As in \cref{lem:KST-coupling}, we can model the random variable $X$ as $X=e(U_{\pi,\xi})=f_\pi(\xi)$, where $\pi$ is a random permutation of $[n]$, and $\xi=(\xi_1,\dots,\xi_m)\sim\Rad^m$ is a sequence of independent Rademacher random variables.

Let us say that a $(2r)$-tuple $(i_1,\dots,i_r,j_1,\dots,j_r)\in [m]^{2r}$ is \emph{strong} if $i_1<\dots<i_r<j_1<\dots<j_r$ and if we have $a_{i_\ell j_\ell}=1/2$ for $\ell=1,\dots,r$, but $a_{i_\ell j_q}=0$ whenever $\ell\neq q$ (note that this definition depends on the permutation $\pi$; recall \cref{eq-value-a-ij}). We first use \cref{lem-many-copies} to show that there are likely to be many strong $(2r)$-tuples.

\begin{claim}\label{claim-f-pi-tuples}
Subject to the randomness of the random permutation $\pi$, with probability $1-e^{-\Omega(n)}$ there are $\Omega(m^{2r})$ strong $(2r)$-tuples.
\end{claim}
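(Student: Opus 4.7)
The plan is to construct a specific labeled graph $H$ on $4r$ vertices so that, whenever $\pi$ assigns the $4r$ positions determined by an index tuple $(i_1,\dots,i_r,j_1,\dots,j_r)$ to a labeled induced copy of $H$, the index tuple is automatically strong. Combined with \cref{lem-many-copies}, this will yield a first-moment estimate $\E[S]=\Omega(m^{2r})$, where $S$ denotes the number of strong $(2r)$-tuples. A standard Azuma-type concentration inequality for functions of a random permutation will then upgrade this to the required high-probability bound.

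Concretely, take $H$ to be the labeled graph on vertices $v_1,\dots,v_r,v'_1,\dots,v'_r,w_1,\dots,w_r,w'_1,\dots,w'_r$ whose only edges are $\{v_\ell,v'_\ell\}$ and $\{w_\ell,w'_\ell\}$ for $\ell=1,\dots,r$ (so $H$ is a perfect matching with $2r$ edges). Substituting the corresponding edge indicators into \cref{eq-value-a-ij}, one checks directly that if, for some index tuple $(i_1,\dots,j_r)$ with $i_1<\dots<i_r<j_1<\dots<j_r\le m$, the $4r$ vertices $\pi(i_\ell),\pi(j_\ell),\pi(i_\ell+m),\pi(j_\ell+m)$ (playing the roles of $v_\ell,v'_\ell,w_\ell,w'_\ell$ respectively) form a labeled induced copy of $H$, then $a_{i_\ell j_\ell}=\tfrac{1}{4}(1-0-0+1)=\tfrac{1}{2}$ and $a_{i_\ell j_q}=0$ whenever $\ell\ne q$, so the tuple is strong. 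For any fixed index tuple, the ordered $4r$-tuple $(\pi(i_1),\dots,\pi(j_r+m))$ is uniformly distributed among the $(n)_{4r}$ ordered $4r$-tuples of distinct vertices of $G$. By \cref{lem-many-copies}, $G$ contains $\Omega(n^{4r})$ induced copies of the perfect matching on $4r$ vertices, and each such unordered copy gives rise to $(2r)!\cdot 2^{2r}$ ordered $4r$-tuples that are labeled copies of $H$. Hence the probability that any fixed index tuple is strong is at least $\Omega(n^{4r})/(n)_{4r}=\Omega(1)$, and summing over the $\binom{m}{2r}=\Theta(m^{2r})$ possible index tuples yields $\E[S]=\Omega(m^{2r})$.

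For the concentration step, observe that fixing one of the $4r$ positions of a strong tuple determines one of its $2r$ free indices, leaving at most $O(m^{2r-1})$ choices for the remaining indices; so every position in $[n]$ is contained in at most $O(m^{2r-1})$ strong tuples, and swapping the values of $\pi$ at any two positions changes $S$ by at most $O(m^{2r-1})$. Applying the standard Azuma/McDiarmid-type inequality for functions of a uniform random permutation yields
\[\Pr\!\left(S\le\tfrac{1}{2}\E[S]\right)\le\exp\!\left(-\Omega\!\left(\frac{(\E[S])^2}{n\cdot m^{4r-2}}\right)\right)=\exp(-\Omega(m^2/n))=e^{-\Omega(n)},\]
since $m\ge cn$. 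The main subtlety lies in this final step: the exponential-in-$n$ failure probability depends critically on the Lipschitz constant being essentially a factor of $m$ smaller than the typical size of $S$, so that the exponent $t^2/(nc^2)$ in the McDiarmid bound evaluates to $\Omega(m^2/n)=\Omega(n)$.
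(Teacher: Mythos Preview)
Your proof is correct and follows essentially the same approach as the paper: both use \cref{lem-many-copies} applied to an induced perfect matching on $4r$ vertices to show $\E[S]=\Omega(m^{2r})$, and both finish with a McDiarmid-type concentration inequality for random permutations using the Lipschitz bound $O(m^{2r-1})$. The only differences are cosmetic (your labeling $v_\ell,v'_\ell,w_\ell,w'_\ell$ versus the paper's $v_1,\dots,v_{4r}$, and your explicit automorphism count $(2r)!\cdot 2^{2r}$, which the paper absorbs into the $\Omega(1)$).
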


\begin{proof}[Proof of \cref{claim-f-pi-tuples}]
Since $G$ is a $C$-Ramsey graph, by \cref{lem-many-copies}, it has $\Omega(n^{4r})$ induced copies of a perfect matching on $4r$ vertices (consisting of $2r$ edges). That is to say, there are $\Omega(n^{4r})$ sequences of distinct vertices $(v_1,\dots,v_{4r})\in V(G)^{4r}$ such that for $i=1,\dots,2r$ there is an edge between $v_i$ and $v_{i+2r}$ and there are no other edges between $v_1,\dots,v_{4r}$.

There are $\binom{m}{2r}=\Omega(m^{2r})$ different $(2r)$-tuples $(i_1,\dots,i_r,j_1,\dots,j_r)\in [m]^{2r}$ with $i_1<\dots<i_r<j_1<\dots<j_r$. For each such $(2r)$-tuple,
\[(\pi(i_1), \dots, \pi(i_r),\pi(i_1+m),\dots ,\pi(i_r+m),\pi(j_1), \dots, \pi(j_r),\pi(j_1+m),\dots ,\pi(j_r+m))\]
is a uniformly random sequence of $4r$ distinct vertices of $G$. Thus, with probability $\Omega(1)$, it is one of the sequences $(v_1,\dots,v_{4r})$ considered above. But if that is the case, then $(i_1,\dots,i_r,j_1,\dots,j_r)$ is strong: by \cref{eq-value-a-ij}, $a_{i_\ell j_\ell}=\frac{1}{4}-0-0+\frac{1}{4}=\frac{1}{2}$ for $\ell=1,\dots,r$, but $a_{i_\ell j_q}=0-0-0+0=0$ whenever $\ell\neq q$.

Let $Z$ be the number of strong $(2r)$-tuples: we have just proved that $\E Z=\Omega(m^{2r})$. Now we can conclude the proof with a concentration
inequality. Note that changing $\pi$ by a transposition (swapping some $\pi(t)$ and $\pi(t')$) changes the number of strong $(2r)$-tuples by at most $8r\cdot m^{2r-1}$. Indeed, there are at most $8r\cdot m^{2r-1}$ different $(2r)$-tuples $(i_1,\dots,i_r,j_1,\dots,j_r)\in [m]^{2r}$ such that $t$ or $t'$ occur among $i_1,\dots,i_r,i_1+m,\dots, i_r+m,j_1,\dots,j_r, j_1+m,\dots, j_r+m$ (which are the only places where the value of $\pi$ affects whether $(i_1,\dots,i_r,j_1,\dots,j_r)$ is strong). Thus, by a McDiarmid-type concentration inequality for random permutations (see for example \cite[Section 3.2]{McD98}), we have
\[\Pr(Z< \E Z/2)\le \exp\left(-\Omega\left(\frac{(\E Z/2)^2}{n\cdot (8r\cdot m^{2r-1})^2}\right)\right)=\exp\left(-\Omega\left(\frac{(m^{2r})^2}{n\cdot (8r\cdot m^{2r-1})^2}\right)\right)=\exp(-\Omega(n)),\]
recalling that $m\geq cn$.
\end{proof}

Now, condition on an outcome of $\pi$ satisfying the conclusion of \cref{claim-f-pi-tuples}. Conditionally, $X$ can be represented as a quadratic polynomial $f_{\pi}(\xi)$ in $\xi=(\xi_1,\dots,\xi_m)\sim\Rad^m$. We can express the homogeneous degree-2 part of $f_\pi(\xi)$ as $\xi^T Q_{\pi} \xi$ for some symmetric $m\times m$ matrix, and note that for $i< j$ the $(i,j)$-entry of $Q_{\pi}$ equals $a_{ij}/2$. \cref{claim-f-pi-tuples} implies that the matrix $Q_\pi$ has $\Omega(m^{2r})$ full-rank $r\times r$ submatrices (note that for every strong $(2r)$-tuple $(i_1,\dots,i_r,j_1,\dots,j_r)$ the submatrix with rows $i_1,\dots,i_r$ and columns $j_1,\dots,j_r$ is a diagonal matrix with entries $1/4$ on the diagonal). Hence by \cref{lem:many-full-rank} there is $\eps=\Omega(1)$ such that whenever we change up to $2\eps m^2$ entries of $Q_{\pi}$, the resulting matrix has rank at least $r$. Now, if $h(\xi)$ is a quadratic form differing from $f$ in at most $\eps m^2$ coefficients, then $h$ is of the form $\xi^T Q_{\pi}' \xi$ for a symmetric $m\times m$ matrix $Q_\pi'$ which differs from $Q_{\pi}$ in at most $2\eps m^2$ entries and consequently has rank at least $r$. Thus, using that the degree-2 coefficients $a_{ij}$ of $f_{\pi}$ all lie in the set $S=\lbrace -\frac{1}{2}, -\frac{1}{4}, 0, \frac{1}{4}, \frac{1}{2}\rbrace$, applying \cref{thm:rank-edit} yields
\[\sup_{x\in \QQ}\Pr\left(f_\pi(\xi)=x\right)<C(r,\eps, S)\cdot \frac{\left(\log n\right)^{r/2}}{(cn)^{1-2/\left(r+2\right)}}\le \frac{n^{o(1)}}{n^{1-2/\left(r+2\right)}}.\]
Recalling that we have been conditioning on an event that holds with probability $1-e^{-\Omega(n)}$, it follows that
\[\sup_{x\in \QQ}\Pr\left(X=x\right)=\frac{n^{o(1)}}{n^{1-2/\left(r+2\right)}}+e^{-\Omega(n)}=n^{-1+2/(r+2)+o(1)},\]
as desired.
\end{proof}

\section{A technical anti-concentration inequality for real polynomials\label{sec:main-lemma}}

In this section, we will prove an anti-concentration bound for real quadratic polynomials satisfying a certain technical non-degeneracy condition. This will be one of the key ingredients for the proofs of \cref{thm:rank-L1,thm:rank-edit}.

To cleanly state our anti-concentration inequality, we first make some simple definitions.

\begin{defn}\label{def:non-degenerate}
For an $n\times n$ matrix $M$ and a tuple $(i_{1},\dots,i_{r})\in[n]^r$, let $M(i_{1},\dots,i_{r})$ be the $r\times n$ matrix whose rows are $\row_{i_1}(M),\dots,\row_{i_r}(M)$. For $\delta>0$, let us say that a $r\times n$ matrix $M'$ is \emph{$\delta$-non-degenerate} if for any unit vector $e\in\RR^{r}$, there are at least $\delta n$ columns $w$ of $M'$ satisfying $\left|\left\langle w,e\right\rangle \right|\ge\delta$.
\end{defn}

Now, our anti-concentration inequality is as follows.

\begin{lem}
\label{lem:key}For any integer $r\geq 3$ and any $\delta> 0$ there is a constant $C=C\left(r,\delta\right)$
such that the following holds. Consider a real quadratic polynomial $f\left(x\right)=\sum_{1\le i\le j\le n}a_{ij}x_{i}x_{j}+\sum_{1\le i\le n}a_{i}x_{i}+a_0$, let $a_{ji}=a_{ij}$ for $i>j$, and let $M=(a_{ij})_{i,j}\in \RR^{n\times n}$. Suppose that each $|a_{ij}|\le 1$, and suppose
that there is a set $T\subseteq[n]^{r}$ of $\delta n$
disjoint $r$-tuples such that for each $(i_{1},\dots,i_{r})\in T$, the
matrix $M(i_{1},\dots,i_{r})$ is $\delta$-non-degenerate. Then, for $\xi\in\Rad^{n}$ and any $x\in\RR$, we have
\[
\Pr\left(\left|f\left(\xi\right)-x\right|\le n^{2/\left(r+2\right)}\right)\le C\cdot\frac{\left(\log n\right)^{r/2}}{n^{1-2/\left(r+2\right)}}.
\]
\end{lem}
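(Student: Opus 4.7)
The plan is to estimate the characteristic function $\phi(t) = \E e^{itf(\xi)}$ via Fourier decoupling and then apply Esseen's concentration inequality. By Esseen, the desired bound on $\sup_x\Pr(|f(\xi)-x|\le L)$ (with $L=n^{2/(r+2)}$) will follow from showing
\[
\int_{-1/L}^{1/L}|\phi(t)|\,dt = O\!\left(\frac{(\log n)^{r/2}}{n}\right).
\]

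To bound $|\phi(t)|$ I use the standard decoupling for quadratic forms: with $\xi'$ an independent copy of $\xi$, change variables to $\eta_i=(\xi_i-\xi_i')/2$ and $\zeta_i=(\xi_i+\xi_i')/2\in\{-1,0,1\}$ and integrate out $\eta$ conditional on $\zeta$. Since $\eta_i$ is a uniform Rademacher on $I:=\{i:\zeta_i=0\}$ and vanishes elsewhere, one obtains
\[
|\phi(t)|^2 \le \E_\zeta \prod_{i\in I}|\cos(2t\tilde v_i(\zeta))|,\qquad \tilde v_i(\zeta):=\sum_j a_{ij}\zeta_j + a_i.
\]
Combined with the elementary bound $|\cos(y)|\le \exp(-c\|y/\pi\|^2)$ (where $\|\cdot\|$ denotes distance to the nearest integer), the task reduces to lower-bounding $\sum_{i\in I}\|2t\tilde v_i/\pi\|^2$ with high probability over $\zeta$.

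I will produce such a lower bound by exploiting the $r$-tuples in $T$. Since the tuples are pairwise disjoint, each $\tau\in T$ is contained in $I$ independently with probability $2^{-r}$, so by Chernoff, $\Theta(n)$ of them are in $I$ with probability $1-e^{-\Omega(n)}$. For each such $\tau$, the vector $\tilde v^{(\tau)}:=M_\tau\zeta+c_\tau\in\RR^r$ (with $c_\tau=(a_{i_\ell^\tau})_\ell$) is a linear image of $\zeta_J$ ($J=[n]\setminus I$), and an $\eps$-net argument on the unit sphere in $\RR^r$ (whose size is $O(\eps^{1-r})$) combined with Chernoff transfers the $\delta$-non-degeneracy of $M_\tau$ to $M_\tau|_J$ with probability $1-e^{-\Omega(n)}$. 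This transfer implies $M_\tau|_J(M_\tau|_J)^T\succeq c(r,\delta)n\, I_r$, so $\tilde v^{(\tau)}$ is approximately Gaussian in $\RR^r$ with covariance of order $n$. Consequently, $\E\sum_\ell \|2t\tilde v^{(\tau)}_\ell/\pi\|^2=\Omega(\min(nt^2,1))$: for $|t|\ge 1/\sqrt n$ each coordinate of $2t\tilde v^{(\tau)}/\pi$ is spread modulo $\ZZ$ and contributes $\Theta(1)$, while for $|t|<1/\sqrt n$ no wrapping occurs and each coordinate contributes $\Theta(nt^2)$. Summing over the $\Theta(n)$ relevant tuples and applying a bounded-differences concentration inequality in $\zeta_J$ yields
\[
\sum_{i\in I}\|2t\tilde v_i/\pi\|^2\ge c\min(n^2t^2,n)
\]
with probability $1-e^{-\Omega(n)}$, hence $|\phi(t)|^2\le \exp(-c'\min(n^2t^2,n))+e^{-\Omega(n)}$.

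Splitting the Esseen integral, the regime $|t|\le 1/\sqrt n$ contributes the Gaussian tail $\int e^{-c'n^2t^2/2}\,dt=O(1/n)$, while for $1/\sqrt n\le |t|\le 1/L$ the uniform exponential $e^{-c'n/2}$ contributes only $o(1/n)$. Assembling, $\int|\phi|\,dt = O((\log n)^{r/2}/n)$, the logarithmic factor coming from the $\eps$-net union bound, and Esseen concludes. I expect the main technical obstacle to be the per-tuple Gaussian-type estimate together with the concentration across tuples: the vectors $\tilde v^{(\tau)}$ for different $\tau$ share the same randomness $\zeta_J$, so their contributions are correlated, and showing that they aggregate like independent quantities will require a careful bounded-differences argument whose Lipschitz constant is tracked explicitly in terms of $r$ and $\delta$, together with a quantitative analysis of the ``$\bmod\ZZ$'' distribution of the approximately Gaussian $r$-vector $2t\tilde v^{(\tau)}/\pi$.
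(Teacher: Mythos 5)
Your decoupling setup is sound (a minor variant of the paper's, decoupling all coordinates at once via $\eta_i=(\xi_i-\xi'_i)/2$ rather than splitting against a fixed partition $I\cup J$), and the plan to run Esseen on the characteristic function is the right one, but the crucial concentration claim at the heart of your argument is wrong, and in a way that cannot be repaired by a more careful bounded-differences analysis.

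You claim that with probability $1-e^{-\Omega(n)}$ over $\zeta$ one has $\sum_{i\in I}\|2t\tilde v_i/\pi\|^2 \ge c\min(n^2t^2,n)$, from which you deduce $|\phi(t)|^2\le e^{-c'\min(n^2t^2,n)}+e^{-\Omega(n)}$. But the modulus of the characteristic function of a quadratic form does not decay exponentially in $n$ at scale $t\sim n^{-1/2}$ under the lemma's hypotheses; the best one can say is polynomial decay, and \cref{lem:key} is designed precisely to extract the right anti-concentration bound from a polynomially (not exponentially) decaying Fourier transform. There are two concrete reasons your claim fails. First, all of the vectors $\tilde v^{(\tau)}=M_\tau\zeta+c_\tau$ depend on the same randomness $\zeta$, so the events ``$\tilde v^{(\tau)}$ lies near a lattice point of $(\pi/2t)\ZZ^r$'' are not close to independent and can be strongly positively correlated. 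The probability that any one such event occurs is only polynomially small (this is exactly the content of Hal\'asz' inequality, \cref{thm:halasz}), roughly $n^{-r/2}$, and because the events share randomness, the probability that \emph{all} of them occur simultaneously can be of comparable polynomial order. Hence $\E_\zeta\prod_{i\in I}|\cos(2t\tilde v_i)|$ can itself be $n^{-O(r)}$, not $e^{-\Omega(n)}$. Second, the bounded-differences inequality is quantitatively far too weak here: changing a single $\zeta_j$ moves every $\tilde v_i$ by $O(1)$, hence moves $\sum_{i\in I}\|2t\tilde v_i/\pi\|^2$ by up to $\Theta(|t|n)$, so McDiarmid gives a concentration window of order $\sqrt{n\cdot(|t|n)^2}=n^{3/2}|t|$. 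Your target mean is only $\Theta(n^2t^2)$, and $n^{3/2}|t|\gg n^2t^2$ precisely when $|t|\lesssim n^{-1/2}$, which is the whole regime you need to control. A further red flag: if your characteristic-function bound held, the Esseen integral would be $O(1/n)$ independently of $r$, giving $\sup_x\Pr(f(\xi)=x)=O(1/n)$ under the lemma's hypotheses. The lemma only claims an exponent of $1-2/(r+2)$, and this degradation for finite $r$ is genuine; it reflects exactly the polynomial rate of Fourier decay. (Also, the $(\log n)^{r/2}$ factor cannot come from an $\varepsilon$-net union bound as you suggest, since the net has size $O(\varepsilon^{1-r})$, independent of $n$.)

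The paper handles the correlated tuples through a fundamentally different mechanism. Using Hal\'asz, it shows that for a fixed tuple $(i_1,\dots,i_r)\in T_I$, the probability that all $r$ decoupled coefficients $A_{i_q}$ fall within $s$ of a multiple of $1/t$ is $O(p(s,t))$ for a polynomially small quantity $p(s,t)$. Then it applies Markov's inequality (not a concentration inequality) to conclude that with probability $1-O(p(s,t))$ at least half the tuples contribute $\Omega(n)$ indices $\ell$ with $|\cos(2\pi tA_\ell)|\le e^{-\Omega(t^2s^2)}$, yielding $\prod_\ell|\cos|\le e^{-\Omega(t^2s^2n)}$. Because this is a Markov bound, the failure probability is only $O(p(s,t))$ and hence polynomially small; the clever step is then to optimise $s$ as a function of the level $x$ in a layer-cake computation of $\E\prod_\ell|\cos|$, which produces the correct $|t|$-dependence of $|\phi(t)|^2$ and ultimately the $(\log n)^{r/2}/n^{1-2/(r+2)}$ bound. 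You should reformulate your argument along these lines, replacing bounded differences with Hal\'asz plus Markov.
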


The notion of being $\delta$-non-degenerate is closely related to the condition in an anti-concentration theorem due to Hal\'asz~\cite{Hal77} (stated as \cref{thm:halasz} below), which will be used in the proof of \cref{lem:key}. A matrix $M'$ being $\delta$-non-degenerate can be interpreted as a robust version of $M'$ having full row rank, so if an $n\times n$ matrix $M$ has many $r\times n$ submatrices that are $\delta$-non-degenerate, then there is some sense in which $M$ robustly has rank at least $r$. The reader may wish to compare the statement of \cref{lem:key} with the statements of \cref{thm:rank-L1,thm:rank-edit} in the introduction.

Before proving \cref{lem:key}, we discuss some of the main ideas and ingredients. The most crucial idea is a variant of a \emph{decoupling} trick due to Costello, Tao and Vu~\cite{CTV06}, as follows. If $[n]=I\cup J$ is a partition of the index set into two subsets, then we can break $\xi=(\xi_1,\dots,\xi_n)$ into two subsequences $\xi_{I}$ and $\xi_{J}$. The quadratic polynomial $f(\xi)$ can then be written as $f\left(\xi\right)=f\left(\xi_{I},\xi_{J}\right)$.
The crucial observation is that if $\xi_{J}'$ is an independent copy of $\xi_{J}$
then it is possible to relate the anti-concentration of $f(\xi)$
to the anti-concentration of $Y:=f(\xi_{I},\xi_{J})-f(\xi_{I},\xi_{J}')$:
for example, one can use the Cauchy--Schwarz inequality to prove
that \begin{equation}\Pr(f(\xi)=x)\le\Pr\big( f\left(\xi_{I},\xi_{J}\right)=x\text{ and }f\left(\xi_{I},\xi_{J}'\big)=x\right)^{1/2}\le \Pr(Y=0)^{1/2}.\label{eq:CTV}\end{equation}
After conditioning on an outcome of $(\xi_{J},\xi_{J}')$,
the random variable $Y$ then becomes a \emph{linear} polynomial in
$\xi_{I}$, which is much easier to study.

This approach results in a square-root loss, and therefore seems unsuitable to prove \cref{lem:key}. However, a variation on this approach is to instead
use decoupling to study the modulus of the \emph{characteristic function} (Fourier
transform) $t\mapsto \E e^{2\pi i t f(\xi)}$ of $f(\xi)$. Specifically, we will use the following
simple observation.

\begin{lem}
\label{lem:complex-decoupling}Let $\xi_I$ and $\xi_J$ be independent random vectors, and let $f(\xi_I,\xi_J)$ be a real-valued random variable defined in terms of these random vectors. Let $\xi_J'$ be an independent copy of $\xi_J$. Then for any $t\in \RR$,
$$
\left|\E e^{2\pi itf(\xi_I,\xi_J)}\right|^{2}  \le \E\left[\left|\E[e^{2\pi it\left(f(\xi_{I},\xi_{J})-f(\xi_{I},\xi_{J}')\right)}\mid \xi_{J},\xi_{J'}]\right|\right].
$$
\end{lem}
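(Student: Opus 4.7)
The plan is to perform a two-step ``complex decoupling'' argument: first apply Jensen's inequality after conditioning on $\xi_I$, then introduce the independent copy $\xi_J'$ to expand the squared modulus, and finally exploit the fact that the resulting quantity is already a nonnegative real number to bound it by the expectation of the absolute value of the integrand.

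First, I would set $F(\xi_I,\xi_J)=e^{2\pi it f(\xi_I,\xi_J)}$ and define the partial characteristic function $g(\xi_I)=\E[F(\xi_I,\xi_J)\mid \xi_I]$, so that $\E F=\E g(\xi_I)$. Jensen's inequality applied to the convex function $z\mapsto |z|^2$ then gives
\[|\E F|^2 \;=\; |\E g(\xi_I)|^2 \;\le\; \E|g(\xi_I)|^2.\]

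Second, I would expand $|g(\xi_I)|^2 = g(\xi_I)\overline{g(\xi_I)}$ using an independent copy $\xi_J'$ of $\xi_J$. Since $\xi_J$ and $\xi_J'$ are independent of each other and of $\xi_I$, the conditional independence of $\xi_J$ and $\xi_J'$ given $\xi_I$ yields
\[|g(\xi_I)|^2 \;=\; \E\bigl[F(\xi_I,\xi_J)\overline{F(\xi_I,\xi_J')}\bigm| \xi_I\bigr].\]
Taking outer expectation and exchanging the order of integration via Fubini (and using $\overline{F(\xi_I,\xi_J')}=e^{-2\pi it f(\xi_I,\xi_J')}$, which is valid because $f$ is real-valued), one obtains
\[\E|g(\xi_I)|^2 \;=\; \E_{\xi_J,\xi_J'}\Bigl[\,\E\bigl[e^{2\pi it(f(\xi_I,\xi_J)-f(\xi_I,\xi_J'))}\bigm| \xi_J,\xi_J'\bigr]\Bigr].\]

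Finally, the left-hand side is a nonnegative real number by construction, so it equals its own absolute value, and the triangle inequality for integrals lets us insert the absolute value around the inner conditional expectation on the right-hand side, producing the bound stated in the lemma. There is no real obstacle; the only mildly delicate point is this last step, where passing from $\E_{\xi_J,\xi_J'}[K]$ to $\E_{\xi_J,\xi_J'}|K|$ for the complex-valued function $K(\xi_J,\xi_J')=\E[e^{2\pi it(f(\xi_I,\xi_J)-f(\xi_I,\xi_J'))}\mid \xi_J,\xi_J']$ relies on the a priori knowledge, established in the previous display, that $\E_{\xi_J,\xi_J'}[K]$ is real and nonnegative.
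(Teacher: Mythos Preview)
Your proof is correct and follows essentially the same approach as the paper: apply Jensen's inequality after conditioning on $\xi_I$, expand the squared modulus via an independent copy $\xi_J'$, switch the conditioning to $(\xi_J,\xi_J')$ by Fubini, and then pass to the absolute value of the inner conditional expectation. The paper's presentation is slightly terser in the final step, but the argument is the same.
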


\begin{proof}
First, by convexity we have
$$
\left|\E e^{2\pi itf(\xi_I,\xi_J)}\right|^{2} = \left|\E\left[\E [e^{2\pi itf(\xi_I,\xi_J)} | \xi_{I}]\right]\right|^{2} \le\E\left[\left|\E[e^{2\pi it f\left(\xi_{I},\xi_{J}\right)} | \xi_{I}]\right|^{2}\right].
$$
Then, observe that for independent identically distributed complex-valued random variables $Z,Z'$, we have 
$$
\left|\E Z\right|^{2}=\E Z\overline{\E Z}=\E Z\overline{\E Z'}=\E\left[Z\overline{Z'}\right].
$$
In particular, we obtain
$$
\left|\E[e^{2\pi it f\left(\xi_{I},\xi_{J}\right)}\mid \xi_{I}]\right|^{2}=\E[e^{2\pi it\left(f(\xi_{I},\xi_{J})-f(\xi_{I},\xi_{J}')\right)}\mid\xi_{I}].
$$
It follows that
$$
\left|\E e^{2\pi itf(\xi_I,\xi_J)}\right|^{2} \le \E\left[\E[e^{2\pi it\left(f(\xi_{I},\xi_{J})-f(\xi_{I},\xi_{J}')\right)}\mid\xi_{I}]\right]
 =\E\left[\E[e^{2\pi it\left(f(\xi_{I},\xi_{J})-f(\xi_{I},\xi_{J}')\right)}\mid\xi_{J},\xi_{J'}]\right],
$$
from which we can conclude the desired result.
\end{proof}

We remark that while we were working on this paper, some similar decoupling
tricks were independently developed by Berkowitz~\cite{Ber18} in
connection with his work on local central limit theorems for clique counts in random
graphs. We also remark that a very similar argument appears implicitly in a paper of Nguyen~\cite{Ngu12}.

Next, the following result is called \emph{Ess\'een's concentration inequality}~\cite{Ess66}. It gives a way to prove anti-concentration bounds by integrating bounds on the characteristic function of a random variable. This particular statement is a special case of \cite[Lemma~7.17]{TV}.

\begin{lem}[\cite{TV}]
\label{lem:esseen}There is a constant $C>0$ such that the following holds. Let $X$ be a real-valued random variable which takes only a finite number of values. Then for any $\varepsilon>0$, any $s>0$ and any $x\in \RR$, we have
\[ \Pr\left(\left|X-x\right|\le s\right)\le C\left(s+1/\varepsilon\right)\int_{-\varepsilon}^{\varepsilon}\left|\E e^{2\pi itX}\right|\d t.\]
\end{lem}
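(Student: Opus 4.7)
The plan is to bound $\Pr(|X-x|\le s)$ by the expectation of a Fourier-analytic majorant of $\one_{[-s,s]}$ whose Fourier transform is supported in the small interval $[-\varepsilon,\varepsilon]$; this converts the probability bound into an integral against the characteristic function of $X$. Since $|\E e^{2\pi itX}|$ is invariant under translating $X$ by a constant, I may first reduce to the case $x=0$.

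The heart of the argument is to construct a nonnegative integrable kernel $K\colon\RR\to\RR_{\ge 0}$ satisfying (i) $K(y)\ge 1$ for all $|y|\le s$, (ii) its Fourier transform $\widehat K$ is supported in $[-\varepsilon,\varepsilon]$, and (iii) $\|\widehat K\|_\infty\le C(s+1/\varepsilon)$ for an absolute constant $C$. Granted such a $K$, Fourier inversion gives $K(y)=\int_{-\varepsilon}^{\varepsilon}\widehat K(t)\,e^{2\pi ity}\,dt$, and exchanging expectation with integration (legitimate since $X$ takes only finitely many values and $\widehat K$ is bounded on a compact set) yields
\[\Pr(|X|\le s)\le \E K(X)=\int_{-\varepsilon}^{\varepsilon}\widehat K(t)\,\E e^{2\pi itX}\,dt\le \|\widehat K\|_\infty\int_{-\varepsilon}^{\varepsilon}|\E e^{2\pi itX}|\,dt,\]
which is the desired estimate.

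The main obstacle is the construction of $K$. My plan is to fix once and for all a nonnegative Schwartz function $\eta_0$ with $\widehat{\eta_0}$ supported in $[-1/2,1/2]$ and $\eta_0\ge c$ on $[-1,1]$ for some absolute constant $c>0$ (for example $\eta_0(y)=(\sin(\pi y/2)/(\pi y/2))^2$, whose Fourier transform is a triangular bump supported in $[-1/2,1/2]$), rescale to $\eta(y)=\varepsilon\,\eta_0(\varepsilon y)$ so that $\widehat\eta(t)=\widehat{\eta_0}(t/\varepsilon)$ is supported in $[-\varepsilon/2,\varepsilon/2]$ and $\eta(y)\ge c\varepsilon$ for $|y|\le 1/\varepsilon$, and then set
\[K(y)=\frac{1}{2c}\bigl(\eta*\one_{[-s-1/\varepsilon,\,s+1/\varepsilon]}\bigr)(y).\]
Property~(ii) is immediate since $\widehat K=\widehat\eta\cdot\widehat{\one_{[-s-1/\varepsilon,\,s+1/\varepsilon]}}$, and the first factor is compactly supported in $[-\varepsilon/2,\varepsilon/2]$. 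Property~(iii) follows from the standard bound $\|\widehat{\one_{[-a,a]}}\|_\infty=2a$, giving $\|\widehat K\|_\infty\le (1/(2c))\cdot\|\widehat\eta\|_\infty\cdot 2(s+1/\varepsilon)=O(s+1/\varepsilon)$. For property~(i), whenever $|y|\le s$ the integration interval $[y-s-1/\varepsilon,\,y+s+1/\varepsilon]$ contains $[-1/\varepsilon,1/\varepsilon]$, on which $\eta$ has mass at least $(2/\varepsilon)\cdot c\varepsilon=2c$, so $K(y)\ge 1$, completing the construction and hence the proof.
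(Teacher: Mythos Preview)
The paper does not prove this lemma; it is quoted as a special case of \cite[Lemma~7.17]{TV} and used as a black box. Your argument is correct and is essentially the standard proof of Ess\'een's inequality via a band-limited nonnegative majorant: the Fej\'er-type kernel $\eta_0(y)=(\sin(\pi y/2)/(\pi y/2))^2$ has the claimed properties, and the convolution $K=\frac{1}{2c}\,\eta*\one_{[-s-1/\varepsilon,\,s+1/\varepsilon]}$ verifies (i)--(iii) as you check. Since the paper offers no proof of its own, there is nothing further to compare.
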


It may not be immediately obvious how one can benefit from using a decoupling trick for characteristic functions (as in \cref{lem:complex-decoupling}) instead of using a decoupling trick for point probabilities directly (as in \cref{eq:CTV}). Indeed, \cref{lem:complex-decoupling} also involves a square-root loss when studying $f(\xi_I,\xi_J)$ via $f(\xi_{I},\xi_{J})-f(\xi_{I},\xi_{J}')$. The key is that the square-root loss is ``inside the integral''. It turns out that in the setting of \cref{lem:key}, the characteristic function has ``sharp threshold'' behaviour: if $|t|$ is much smaller than $1/n$, then $|\E e^{2\pi i t f(\xi)}|$ is very close to one, whereas if $|t|$ is much larger than $1/n$ then $|\E e^{2\pi i t f(\xi)}|$ is very close to zero. Therefore taking the square root of the modulus of the characteristic function has a relatively small effect on its integral.

In order to effectively apply \cref{lem:complex-decoupling}, we will need some understanding of the typical structure of $f(\xi_{I},\xi_{J})-f(\xi_{I},\xi_{J}')$ as a linear polynomial in $\xi_{I}$, subject to the randomness of $\xi_{J}$ and $\xi_{J}'$. In particular, we need to show that this polynomial is unlikely to have many coefficients that are close to zero. To accomplish this, we use 
the following multi-dimensional extension of the (linear) Littlewood--Offord theorem
due to Hal\'asz~\cite{Hal77}\footnote{We remark that a very similar inequality was also proved by Tao and
Vu~\cite[Theorem~1.4]{TV12}, and that there is a large body of work
generalising the Erd\H os--Littlewood--Offord theorem to higher
dimensions without this non-degeneracy condition (in which case the
bounds are much weaker; see for example the survey \cite[Section~2]{NV13}).}.

\begin{thm}
\label{thm:halasz}For any integer $d\geq 1$ and any $\delta>0$, there is $C=C(d,\delta)>0$ such that the following holds. Let $a_{1},\dots,a_{n}$ be a collection of vectors in $\RR^{d}$ and let $s>0$. Suppose that for any unit vector $e\in\RR^{d}$, there are at least $\delta n$ vectors $a_{i}$ with $|\langle a_{i},e\rangle|\ge s$.
Then for $\xi=(\xi_1,\dots,\xi_n)\in\Rad^{n}$ we have
\[
\sup_{u\in\RR^{d}}\Pr\left(\left\Vert \sum_{i=1}^{n}\xi_{i}a_{i}-u\right\Vert _{2}<s\right)\le Cn^{-d/2}.
\]
\end{thm}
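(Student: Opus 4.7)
My plan is to prove Hal\'asz's multi-dimensional theorem by Fourier analysis: reduce the small-ball probability for $S := \sum_i \xi_i a_i$ to a characteristic-function integral via a $d$-dimensional version of Ess\'een's inequality (\cref{lem:esseen}), and then bound that integral by exploiting the non-degeneracy hypothesis.

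First I would rescale each $a_i \mapsto a_i/s$ to reduce to the case $s = 1$; the hypothesis then says that for every unit $e \in \RR^d$, $|\{i : |\langle a_i, e\rangle| \ge 1\}| \ge \delta n$. Equivalently, the matrix $A \in \RR^{d\times n}$ whose columns are $a_1, \dots, a_n$ satisfies $\|A^T e\|_2^2 \ge \delta n$ for every unit $e \in \RR^d$, so every singular value of $A^T$ is at least $\sqrt{\delta n}$. A $d$-dimensional form of Ess\'een's inequality then yields, for some constant $c_d > 0$,
\[
\sup_{u\in\RR^d} \Pr(\|S - u\|_2 < 1) \;\le\; C_d \int_{B(0, c_d)} \Bigl|\prod_{i=1}^n \cos(2\pi\langle t, a_i\rangle)\Bigr| \, dt.
\]
Using the elementary inequality $|\cos(2\pi y)| \le \exp\bigl(-c_0\operatorname{dist}(y, \tfrac{1}{2}\ZZ)^2\bigr)$, the integrand is bounded by $\exp(-c_0\Psi(t))$, where $\Psi(t) := \operatorname{dist}\bigl(A^T t, (\tfrac{1}{2}\ZZ)^n\bigr)^2$.

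The core step I would aim for is the level-set bound
\[
\operatorname{Vol}\bigl\{t \in B(0, c_d) : \Psi(t) \le K\bigr\} \;\le\; C_{d, \delta}\,\min\bigl(1, (K/n)^{d/2}\bigr) \qquad (K \ge 0);
\]
a layer-cake integration against $c_0 e^{-c_0 K}\,dK$ then immediately gives $\int_{B(0, c_d)} e^{-c_0\Psi(t)}\,dt \le C'_{d,\delta} n^{-d/2}$, finishing the proof. Half of the level-set estimate is straightforward: because every singular value of $A^T$ is at least $\sqrt{\delta n}$, the pre-image under $A^T$ of a Euclidean $\sqrt K$-ball around any fixed half-lattice point $z \in (\tfrac{1}{2}\ZZ)^n$ is a subset of $\RR^d$ of diameter at most $2\sqrt{K/(\delta n)}$, hence of volume at most $C_d(K/n)^{d/2}$.

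The main obstacle is to bound, independently of $n$, the number of relevant half-lattice points $z \in (\tfrac{1}{2}\ZZ)^n$ whose $\sqrt K$-neighbourhood intersects $A^T(B(0, c_d))$. This is delicate because the $a_i$ may have arbitrarily large norms, so that the $d$-disc $A^T(B(0, c_d)) \subset \RR^n$ can have large physical diameter and a priori pass near many lattice points. To handle this I would decompose $B(0, c_d)$ into dyadic annular shells $\|t\|_2 \sim 2^{-j} c_d$ and apply the non-degeneracy hypothesis to the normalized direction $t/\|t\|_2$ on each scale: for $c_d$ chosen sufficiently small, two distinct contributing lattice points at a given scale would give a nonzero $z - z' \in (\tfrac{1}{2}\ZZ)^n$ lying within $2\sqrt K$ of the image $A^T(B(0, 2^{1-j}c_d))$, and the singular-value bound together with $\|z - z'\|_2 \ge 1/2$ then forces a contradiction unless $K$ is already at least a constant multiple of $n$ (in which case the trivial bound $\operatorname{Vol}\le\operatorname{Vol}(B(0,c_d))$ suffices). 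Summing the resulting per-scale contribution of $O_{d,\delta}(1)$ lattice points over the $O(\log n)$ dyadic scales yields the level-set estimate (possibly after absorbing a harmless logarithmic factor into $c_0$), completing the proof.
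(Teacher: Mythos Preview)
The paper does not prove \cref{thm:halasz}; it is quoted as a known result of Hal\'asz~\cite{Hal77} (with a remark that a similar statement appears in Tao--Vu~\cite{TV12}) and used as a black box in the proof of \cref{lem:key}. So there is no ``paper's own proof'' to compare against.

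That said, your Fourier-analytic framework (Ess\'een in $\RR^d$, then bounding $\int \prod_i |\cos(2\pi\langle t,a_i\rangle)|\,dt$) is indeed the standard route, and the reduction to the level-set bound is reasonable. However, the lattice-counting step has a genuine gap. Your claimed contradiction from ``the singular-value bound together with $\|z-z'\|_2\ge 1/2$'' does not go through: the singular-value inequality $\|A^T w\|_2\ge\sqrt{\delta n}\,\|w\|_2$ is a \emph{lower} bound on $\|A^T w\|$, and gives no upper bound on the diameter of $A^T(B(0,2^{1-j}c_d))$, so it cannot by itself limit how many half-lattice points lie near that image. Concretely, take $d=1$ and $a_1=\dots=a_n=M$ for a large integer $M$. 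Then $A^T t=(Mt,\dots,Mt)$, and on the annulus $|t|\sim 2^{-j}c_d$ the image passes within distance~$0$ of roughly $2^{-j}Mc_d$ distinct half-lattice points $(k/2,\dots,k/2)$, not $O(1)$. (The final volume bound still holds here, because these lattice points are $\sqrt n/2$ apart and the per-point preimage volume is correspondingly tiny --- but that is a different balancing, not the one you sketched.)

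What actually controls the integral is a more careful use of the hypothesis than the crude singular-value consequence: for each direction $e=t/\|t\|$ one has $\delta n$ indices with $|\langle a_i,e\rangle|\ge 1$, and one must exploit this to bound $\sum_i \operatorname{dist}(\langle t,a_i\rangle,\tfrac12\ZZ)^2$ from below on average over~$t$ (Hal\'asz does this via an averaging-and-iteration argument; an alternative is to track, for each fixed $t$, the full distribution of the fractional parts $\{\langle t,a_i\rangle\}$ rather than just their $\ell_2$ mass). Your outline would need to be reworked at this point.
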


We have still not yet described how to choose the partition $[n]=I\cup J$ for decoupling. We will actually just choose the partition randomly; we will then need the fact that a random submatrix of a non-degenerate
matrix is typically still non-degenerate, as follows. Recall that for a matrix $M\in \RR^{r\times n}$ and a subset $I\su [n]$, we defined $M_I$ to be the $r\times |I|$ submatrix of $M$ consisting of the columns with indices in $I$. Also recall that matrix norms in this paper are entrywise.

\begin{lem}
\label{lem:inherit-non-degeneracy}Fix an integer $r\geq 1$ and fix $\delta>0$. Suppose $M\in\RR^{r\times n}$ is a $\delta$-non-degenerate matrix with $\Vert M\Vert_\infty\le 1$. Then, for a uniformly random subset $I\su [n]$, with probability $1-e^{-\Omega\left(n\right)}$,
the matrix $M_{I}$ is $\left(\delta/3\right)$-non-degenerate. (Here, the implicit constant in the $\Omega$-term may depend on $r$ and $\delta$.)
\end{lem}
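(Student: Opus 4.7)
The plan is to combine a standard $\varepsilon$-net argument with a Chernoff-type concentration bound. Since $\delta$-non-degeneracy is a universal statement over all unit vectors $e\in S^{r-1}$, one cannot directly union-bound over this uncountable set; instead one discretises the unit sphere and exploits the fact that small perturbations of $e$ barely change the relevant inner products, using the bound $\|\col_j(M)\|_2\le\sqrt{r}$ which comes from $\|M\|_\infty\le 1$. The main obstacle is quantitative book-keeping: one must choose the granularity of the net so that both the net-approximation loss and the Chernoff fluctuations can be absorbed into the weakening of the non-degeneracy parameter from $\delta$ to $\delta/3$.

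Concretely, fix $\eta=\delta/(2\sqrt{r})$ and let $\mathcal{N}\subseteq S^{r-1}$ be an $\eta$-net of size $|\mathcal{N}|\le (3/\eta)^r=O_{r,\delta}(1)$. For each $e'\in\mathcal{N}$, the $\delta$-non-degeneracy hypothesis supplies a set $N_{e'}\subseteq[n]$ with $|N_{e'}|\ge\delta n$ such that $|\langle \col_j(M),e'\rangle|\ge\delta$ for all $j\in N_{e'}$. Viewing a uniformly random $I\subseteq[n]$ as including each index independently with probability $1/2$, a standard Chernoff bound gives $|I\cap N_{e'}|\ge \delta n/3$ with probability $1-e^{-\Omega(n)}$. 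A union bound over the constant-size $\mathcal{N}$ preserves this inequality for all $e'\in\mathcal{N}$ simultaneously with probability $1-e^{-\Omega(n)}$.

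Finally, I would show that on this good event $M_I$ is $(\delta/3)$-non-degenerate. Given an arbitrary unit vector $e\in S^{r-1}$, choose $e'\in\mathcal{N}$ with $\|e-e'\|_2\le\eta$. By Cauchy--Schwarz, for each $j\in I\cap N_{e'}$,
\[
|\langle \col_j(M),e\rangle|\ge |\langle \col_j(M),e'\rangle|-\|\col_j(M)\|_2\cdot\|e-e'\|_2\ge \delta-\sqrt{r}\cdot\eta=\delta/2\ge\delta/3.
\]
Since $|I\cap N_{e'}|\ge\delta n/3\ge(\delta/3)|I|$ (using $|I|\le n$), this exhibits at least $(\delta/3)|I|$ columns of $M_I$ whose inner product against $e$ has absolute value at least $\delta/3$, which is exactly the required non-degeneracy. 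Because the event we conditioned on has probability $1-e^{-\Omega(n)}$, the lemma follows.
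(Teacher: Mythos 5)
Your argument is correct and follows the same route as the paper: fix a constant-size $\eta$-net of $S^{r-1}$, apply a Chernoff bound plus a union bound over the net to keep $\Omega(\delta n)$ ``good'' columns for each net point, then transfer to arbitrary unit vectors via Cauchy--Schwarz and the column-norm bound coming from $\|M\|_\infty\le 1$, finally noting $|I|\le n$ to convert the absolute count into the required fraction of $|I|$. The only cosmetic difference is that the paper uses the slightly looser bound $\|\col_j(M)\|_2\le r$ with net parameter $\delta/(2r)$, while you use the sharper $\sqrt r$ and $\delta/(2\sqrt r)$; this has no effect on the conclusion.
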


\begin{proof}
Let $\eps=\delta/(2r)>0$, and fix a finite set $E\su \lbrace e\in \RR^r: \Vert e\Vert_2=1\rbrace$ such that for any unit vector $e\in \RR^r$ we can find $e'\in E$ with $\Vert e-e'\Vert_2\leq \eps$ (that is, $E$ is an $\eps$-net of the unit sphere in $\RR^r$).

Note that every column $w\in \RR^r$ of $M$ satisfies $\Vert w\Vert_2\leq r$. Thus, whenever $e'\in E$ and $w$ is a column of $M$ with $\left|\left\langle w,e'\right\rangle \right|\ge\delta$, then all unit vectors $e\in \RR^r$ with $\Vert e-e'\Vert_2\leq \eps$ satisfy
\[\left|\left\langle w,e\right\rangle \right|\geq 
\left|\left\langle w,e'\right\rangle \right|-\left|\left\langle w,e'-e\right\rangle \right|\geq
\delta-\Vert w\Vert_2 \Vert e-e'\Vert_2\geq \delta - r\eps\geq \delta/3.\]
For every $e'\in E$, the $\delta$-non-degenerate matrix $M$ has at least $\delta n$ columns $w$ such that $\left|\left\langle w, e'\right\rangle \right|\ge\delta$. By a Chernoff bound, with probability $1-e^{-\Omega\left(n\right)}$, at least $\left(\delta/3\right)n$ of these columns are still in $M_{I}$. By taking the union bound over all $e'\in E$, we see that with probability $1-e^{-\Omega\left(n\right)}$, for every $e'\in E$ the matrix $M_I$ has $(\delta/3) n$ columns $w$ with $\left|\left\langle w,e'\right\rangle \right|\ge\delta$. Whenever this happens, for every unit vector $e\in \RR^r$, the matrix $M_I$ has $(\delta/3) n$ columns $w$ with $\left|\left\langle w,e\right\rangle \right|\ge\delta/3$. As $M_I$ has at most $n$ columns in total, this means that $M_I$ is $(\delta/3)$-non-degenerate.
\end{proof}

We are now ready to prove \cref{lem:key}.

\begin{proof}
[Proof of \cref{lem:key}]
Fix an integer $r\geq 3$ and fix $\delta> 0$. For all asymptotic notation in this proof, we treat $r$ and $\delta$ as constants. Let $f\in \RR[x_1,\dots,x_n]$ and $T\subseteq [n]^r$ be as in the lemma statement. For any subset $J \subseteq [n]$ and tuple
$(i_{1},\dots,i_{r})\in T$, let $M_{J}(i_{1},\dots,i_{r})$ be
the submatrix of $M(i_{1},\dots,i_{r})$ consisting of the columns with indices $j\in J$.

Now, consider a uniformly random subset $J\subseteq [n]$, and let $I=[n]\setminus J$. By \cref{lem:inherit-non-degeneracy} and the union bound, with probability $1-e^{-\Omega(n)}$ each $M_{J}(i_{1},\dots,i_{r})$, for $(i_1,\dots,i_r)\in T$, is $(\delta/3)$-non-degenerate. Also, by a Chernoff bound, with probability $1-e^{-\Omega(n)}$ we have $\vert J\vert\geq n/3$ and $\vert I^r\cap T\vert\geq 2^{-r-1}\vert T\vert\geq 2^{-r-1}\delta n$.

Thus, we can fix a partition $[n]=I\cup J$ such that $\vert J\vert\geq n/3$ and such that there exists a set $T_{I}\subseteq  I^{r}\cap T$ of at least $2^{-r-1}\delta n=\Omega(n)$ disjoint $r$-tuples, with the property that for each $(i_{1},\dots,i_{r})\in T_{I}$, the matrix $M_{J}(i_{1},\dots,i_{r})$ is $(\delta/3)$-non-degenerate.

Now, let $\xi_{I}=\left(\xi_{\ell}\right)_{\ell\in I}$ and $\xi_{J}=\left(\xi_{j}\right)_{j\in J}$. We write $f\left(\xi\right)=f(\xi_I, \xi_J)$. Let $\xi_{J}'$ be an independent copy of $\xi_{J}$, so by \cref{lem:complex-decoupling}, for any $t\in \RR$ we have
\begin{equation}\label{eq:decoupled}
\left|\E e^{2\pi itf(\xi)}\right|^{2}\le\E\left[\left|\E[e^{2\pi it\left(f(\xi_{I},\xi_{J})-f(\xi_{I},\xi_{J}')\right)}\mid \xi_{J},\xi_{J'}]\right|\right].
\end{equation}
Note that $f(\xi_{I},\xi_{J})-f(\xi_{I},\xi_{J}')=\sum_{\ell\in I}A_{\ell}\xi_{\ell}+A$, where $A_{\ell}=\sum_{j\in J}a_{\ell j}\left(\xi_{j}-\xi_{j}'\right)$ for all $\ell\in I$ and
\[
A=\sum_{j,j'\in J,\,j\le j'}a_{jj'}\left(\xi_j\xi_{j'}-\xi_j'\xi_{j'}'\right)+\sum_{j\in J}a_{j}\left(\xi_{j}-\xi_{j}'\right).
\]
Thus, when conditioning on any fixed outcome of $\xi_{J},\xi_{J'}$, we can interpret $f(\xi_{I},\xi_{J})-f(\xi_{I},\xi_{J}')$ as a linear function in $\xi_I$ with coefficients $A_\ell$. Hence, for any $t\in \RR$, we obtain
\begin{equation}\label{eq:cos}
\left|\E[e^{2\pi it\left(f(\xi_{I},\xi_{J})-f(\xi_{I},\xi_{J}')\right)}\mid \xi_{J},\xi_{J'}]\right|=\prod_{\ell\in I}\left|\frac{e^{2\pi itA_{\ell}}+e^{-2\pi itA_{\ell}}}{2}\right|
 =\prod_{\ell\in I}\left|\cos\left(2\pi tA_{\ell}\right)\right|.
\end{equation}

Now, for real $\posreal \ge0$, $t\in [-1,1]\setminus \{0\}$ and $\ell\in I$, let $\mathcal{E}_{\ell}^{\posreal ,t}$ be the event that $|2A_{\ell}-\zint/t|\le \posreal $
for some integer multiple $\zint/t$ of $1/t$. Note that if $\mathcal{E}_{\ell}^{\posreal ,t}$
does not hold\footnote{Here we are implicitly assuming that $\posreal |t|< 1$, because if $\posreal |t|\ge 1$, then $\mathcal{E}_{\ell}^{\posreal ,t}$ always holds: if $\posreal |t|\ge 1$, we can always find an integer $\zint\in \ZZ$ with $|2A_{\ell}t-\zint|\le \posreal |t|$.} then $\left|\cos\left(2\pi tA_{\ell}\right)\right|=e^{-\Omega(t^{2}\posreal^{2})}$. We will now use a concentration inequality and Hal\'asz' inequality (\cref{thm:halasz}) to find an upper bound for the probability of the event that $\mathcal{E}_{\ell}^{\posreal ,t}$ holds for many different $\ell$.

\begin{claim}
\label{claim:Ekt-many-i}
Consider some $r$-tuple $(i_{1},\dots,i_{r})\in T_{I}\su I^r$,
and let $\mathcal{E}_{i_{1},\dots,i_{r}}^{\posreal ,t}=\mathcal{E}_{i_{1}}^{\posreal ,t}\cap\dots\cap\mathcal{E}_{i_{r}}^{\posreal ,t}$. Then $\Pr(\mathcal{E}_{i_{1},\dots,i_{r}}^{\posreal ,t})=O\left(p\left(\posreal ,t\right)\right)$,
where
$
p\left(\posreal ,t\right)=\left(\left|t\right|\log n+1/\sqrt n\right)^r\left(\posreal +1\right)^{r}
$.
\end{claim}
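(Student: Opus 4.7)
The plan is to view the event $\mathcal{E}_{i_1,\dots,i_r}^{s,t}$ as asking for the random vector $V := (A_{i_1},\dots,A_{i_r}) \in \RR^r$ to lie in a union of axis-aligned hypercubes of side $s$ centered at the lattice $\frac{1}{2t}\ZZ^r$, and to bound this probability by combining a tail estimate with a Hal\'asz-style covering argument. Conditional on $(\xi_J,\xi_J')$ we may equivalently write
$$V = \sum_{j \in J} \xi_j(a_{i_1 j},\dots,a_{i_r j}) + \sum_{j\in J} \xi'_j(-a_{i_1 j},\dots,-a_{i_r j}),$$
so $V$ is a sum of $N := 2|J|$ independent Rademacher random variables, weighted by the columns of $M_J(i_1,\dots,i_r)$ taken with both signs. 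Since $M_J(i_1,\dots,i_r)$ is $(\delta/3)$-non-degenerate and $|J| \ge n/3$, for any unit vector $e \in \RR^r$ at least $2(\delta/3)|J| = \Omega(N)$ of these $N$ vectors satisfy $|\langle \cdot, e\rangle| \ge \delta/3$, so \cref{thm:halasz} (applied with $d = r$, scale $\delta/3$, and a constant density depending only on $\delta$) yields
$$\sup_{u \in \RR^r} \Pr(\|V - u\|_2 < \delta/3) = O(N^{-r/2}) = O(n^{-r/2}).$$

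Next I would handle the tail: by Hoeffding's inequality applied to $A_{i_\ell} = \sum_{j\in J}a_{i_\ell j}(\xi_j-\xi_j')$, we have $\Pr(|A_{i_\ell}| > C_1 \sqrt{n \log n}) \le n^{-r}$ for a sufficiently large constant $C_1 = C_1(r)$, so a union bound gives $\Pr(V \notin [-L,L]^r) = O(n^{-r})$ for $L = C_1 \sqrt{n\log n}$; this contribution is absorbed into $O(p(s,t))$ because $p(s,t) \ge n^{-r/2}$. On the bounded region, the number of lattice points $k \in \ZZ^r$ with $k/(2t) \in [-L,L]^r$ is at most $(1 + 4L|t|)^r = O((1 + |t|\sqrt{n\log n})^r)$. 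Around each such lattice point sits a hypercube of side $s$, which can be covered by $O((s+1)^r)$ balls of radius $\delta/3$; each such ball catches $V$ with probability $O(n^{-r/2})$ by the previous display. Multiplying, the bounded-part contribution is
$$O\bigl((1 + |t|\sqrt{n\log n})^r (s+1)^r n^{-r/2}\bigr) = O\bigl((|t|\sqrt{\log n} + 1/\sqrt n)^r(s+1)^r\bigr),$$
which is at most $O(p(s,t))$ since $\sqrt{\log n} \le \log n$.

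The only real subtlety is to confirm that the doubled Rademacher representation preserves the non-degeneracy condition of \cref{thm:halasz} at the intended scale (so that the $O(n^{-r/2})$ single-ball bound applies uniformly in $s$ and $t$), and to carry out the lattice-tube count and the covering by balls of radius $\delta/3$ uniformly across the ranges $s \ge 0$ and $t \in [-1,1]\setminus\{0\}$ without any special assumptions on $s|t|$. Once these pieces are in place, the rest amounts to straightforward bookkeeping.
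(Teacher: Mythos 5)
Your approach is essentially the paper's -- Hal\'asz' theorem after a decoupling representation, a Hoeffding/Azuma tail bound to truncate the range, a lattice-point count and a covering of each lattice box by $\delta$-scale balls -- but you handle the non-Rademacher variables $\xi_j - \xi_j'$ differently. The paper conditions on $J^* = \{j \in J : \xi_j \neq \xi_j'\}$ (a random subset of $J$), shows via \cref{lem:inherit-non-degeneracy} that $M_{J^*}$ inherits $(\delta/9)$-non-degeneracy with probability $1-e^{-\Omega(n)}$, and then observes that $(\xi_j-\xi_j')/2$ for $j \in J^*$ are honest independent Rademachers. You instead double the representation: $V = \sum_{j\in J}\xi_j b_j + \sum_{j\in J}\xi'_j(-b_j)$ is directly a Rademacher sum in $N = 2|J|$ variables with weight vectors $\{\pm b_j\}$, and the non-degeneracy of $M_J$ immediately yields the density condition in \cref{thm:halasz} for this doubled family. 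This is a cleaner route: it avoids a second application of \cref{lem:inherit-non-degeneracy} and the conditioning step inside the claim. Both give $O(n^{-r/2})$ per ball and the same final bound.

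One small point you flag but should not dismiss: you cannot run the lattice-tube count ``without any special assumptions on $s|t|$.'' The lattice points whose side-$s$ boxes meet $[-L,L]^r$ have $|k| \le 2|t|L + |t|s + 1$, so the count carries an $|t|s$ term that the bound $O((|t|\log n + 1/\sqrt n)^r(s+1)^r)$ does not tolerate if $|t|s$ is large. The paper resolves this by first noting that when $s|t| > 1$ the event $\mathcal{E}^{s,t}_{\ell}$ holds deterministically, and then (since in that regime $p(s,t) > 1$) restricting to $s|t| \le 1$, where $|t|s \le 1$ and the extra term is absorbed into the $+1$. You will need this same reduction; once you include it, your count and the covering step go through exactly as planned.
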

\begin{proof}
We may assume that $\posreal |t|\le 1$ as otherwise the claim is trivial.
Let $J^*$ be the subset of indices $j\in J$ such that $(\xi_{j}-\xi_{j}')\ne 0$. This is a uniformly random subset of $J$, so by the Chernoff bound and \cref{lem:inherit-non-degeneracy}, with probability $1-e^{-\Omega(n)}$, we have $|J^*|\ge |J|/3\ge n/9$ and $M_{J^*}(i_1,\dots,i_r)$ is $(\delta/9)$-non-degenerate. Condition on such an outcome of $J^*$.

Now, conditionally, the random variables $\xi_j^*:=\left(\xi_{j}-\xi_{j}'\right)/2$, for $j\in J^*$, are Rademacher distributed and mutually independent. For $j\in J^*$ let $b_j=(a_{i_1j},\dots,a_{i_rj})$ be the column of $M_{J^*}(i_1,\dots,i_r)$ indexed by $j\in J^*$. Recall that for $q=1,\dots,r$, we have $A_{i_q}=\sum_{j\in J}a_{i_qj}(\xi_j-\xi_j')=2\sum_{j\in J}a_{i_qj}\xi_j^*$. Hence the vector $(A_{i_1},\dots,A_{i_q})\in \RR^r$ equals $2\sum_{j\in J}\xi_j^*b_j$. Thus, by Hal\'asz' inequality (\cref{thm:halasz}), still conditioning on our outcome of $J^*$, for each $v=(\zint_{1},\dots,\zint_{r})\in\ZZ^{r}$ we have 
\[
\Pr\left(\left|2A_{i_{q}}-\zint_{q}/t\right|\le \posreal \text{ for all }q=1,\dots,r\right)=\Pr\left(\left\Vert\sum_{j\in J^*} \xi_j^* b_j-\frac{v}{4t}\right\Vert_\infty\le \frac \posreal 4\right)=O\left(\frac{\left(\posreal +1\right)^{r}}{n^{r/2}}\right).
\]
(Note that the above equation features the norm $\Vert\cdot\Vert_\infty$, while \cref{thm:halasz} concerns the norm $\Vert\cdot\Vert_2$. We can cover a $r$-dimensional box of side-length $\posreal /2$ with $O(\posreal +1)^r$ balls of radius $\delta/9$, and then we can apply \cref{thm:halasz} to each of these balls using that the matrix $M_{J^*}(i_1,\dots,i_r)$ is $(\delta/9)$-non-degenerate.) Also, by the Azuma--Hoeffding inequality and the union bound,
\[
\Pr\left(\left|2A_{i_{q}}\right|\ge \sqrt{n}\log n\text{ for some }q\in [r]\right)=e^{-\Omega((\log n)^2)}.
\]
Finally, note that there are $O\left(\left(\left|t\right|(\sqrt{n}\log n+\posreal )+1\right)^r\right)$ choices $(\zint_{1},\dots,\zint_{r})\in \ZZ^r$ such that each $|\zint_q/t|\le \sqrt{n}\log n+\posreal $, so we can conclude that
\[
\Pr\left(\mathcal{E}_{i_{1},\dots,i_{r}}^{\posreal ,t}\right)=O\left(\left(\left|t\right|(\sqrt{n}\log n+\posreal )+1\right)^r\frac{\left(\posreal +1\right)^{r}}{n^{r/2}}+e^{-\Omega((\log n)^2)}+e^{-\Omega(n)}\right)=O\left(p\left(\posreal ,t\right)\right).\tag*{\qedhere}
\]
\end{proof}
Now, let $W^{\posreal ,t}$ be the number of $r$-tuples $(i_{1},\dots,i_{r})\in T_{I}$
satisfying $\mathcal{E}_{i_{1},\dots,i_{r}}^{\posreal ,t}$. By \cref{claim:Ekt-many-i}, $\E W^{\posreal ,t}=O\left(p(\posreal ,t)|T_I|\right)$,
so $\Pr\left(W^{\posreal ,t}\ge |T_I|/2\right)=O\left(p\left(\posreal ,t\right)\right)$
by Markov's inequality. But observe that if $W^{\posreal ,t}<|T_I|/2$, then at least $|T_I|/2$ different $r$-tuples $(i_{1},\dots,i_{r})\in T_{I}$ contain some index $i_q$ such that $\mathcal{E}_{i_{q}}^{\posreal ,t}$ does not hold. As the $r$-tuples in $T_I$ are all disjoint, this means that there are at least $|T_I|/2=\Omega(n)$ indices $\ell\in I$ such that $\mathcal{E}_{\ell}^{\posreal ,t}$ does not hold, so $\prod_{\ell\in I}\left|\cos\left(2\pi tA_{\ell}\right)\right|=e^{-\Omega(t^{2}\posreal^{2}n)}$.

We have proved that for any $x\in (0,1)$, there is $\posreal =O\left(\sqrt{-\log(x)/(t^{2}n)}\right)$ such that
\begin{equation}
\Pr\left(\prod_{\ell\in I}\left|\cos\left(2\pi tA_{\ell}\right)\right|\geq x\right)\leq \Pr\left(W^{\posreal ,t}\ge |T_I|/2\right)=O\left(p\left(\posreal ,t\right)\right).\label{eq:to-pkt}
\end{equation}
For this value of $\posreal $, we compute
\begin{align}
p\left(\posreal ,t\right)&=O\left(\left(\left|t\right|\log n+1/{\sqrt n}\right) \left(\sqrt{\log(1/x)/(t^{2}n)}+1\right)\right)^{r}\notag\\
&=\begin{cases}
{\displaystyle O\left(\left(\frac{\log n}{\sqrt n}\right)^{r}+\left(\log\left(1/x\right)\right)^{r/2}\left(\frac{\log n}{|t|n}\right)^{r}\right)} & \text{for }\left|t\right|\le{\displaystyle \frac{1}{\sqrt{n}}},\vspace{5pt}\\
{\displaystyle O\left(\left(|t|\log n\right)^{r}+\left(\log\left(1/x\right)\right)^{r/2}\left(\frac{\log n}{\sqrt n}\right)^{r}\right)} & \text{for }\left|t\right|\ge{\displaystyle \frac{1}{\sqrt{n}}}.
\end{cases}\label{eq:pkt}
\end{align}
Note that
\begin{equation}
\int_{0}^{1}\left(\log\left(1/x\right)\right)^{r/2}\d x\le \sum_{j=1}^{\infty}\int_{2^{-j}}^{2^{-j+1}}\left(\log_2\left(1/x\right)\right)^{r/2}\d x\leq \sum_{j=1}^{\infty}2^{-j}j^{r/2}=O\left(1\right).\label{eq:integrate-log}
\end{equation}
Combining \cref{eq:decoupled,eq:cos,eq:to-pkt,eq:pkt,eq:integrate-log}, we have
\begin{align}
\left|\E e^{2\pi itf(\xi)}\right|^{2} &\le\E\left[\prod_{\ell\in I}\left|\cos\left(2\pi tA_{\ell}\right)\right|\right]\notag\\
&=\int_{0}^{1}\Pr\left(\prod_{\ell\in I}\left|\cos\left(tA_{\ell}\right)\right|\ge x\right)\d x\notag\\
 &=\begin{cases}
{\displaystyle O\left(\left(\frac{\log n}{\sqrt n}\right)^{r}+\left(\frac{\log n}{|t|n}\right)^{r}\right)=O\left(\vphantom{\frac{\frac{\int}{\int}}{\frac{\int}{\int}}}\left(\frac{\log n}{\left|t\right|n}\right)^{r}\right)} & \text{for }\left|t\right|\le{\displaystyle \frac{1}{\sqrt{n}}},\vspace{5pt}\\
{\displaystyle O\left(\left(|t|\log n\right)^{r}+\left(\frac{\log n}{\sqrt n}\right)^{r}\right)=O\left(\vphantom{\int}\left(\left|t\right|\log n\right)^{r}\right)} & \text{for }\left|t\right|\ge{\displaystyle \frac{1}{\sqrt{n}}}.
\end{cases}\label{eq:characteristic-function-bound}
\end{align}
Finally, we apply Ess\'een's concentration inequality (\cref{lem:esseen}) with $s=n^{2/(r+2)}$ and $\eps=1/s$ (note that we need to take the square root of the bound in \cref{eq:characteristic-function-bound}). This yields
\begin{align*}
\Pr\left(\left|f(\xi)-x\right|\le n^{2/(r+2)}\right) &\le O\left(s\right)\int_{-1/s}^{1/s}\vert\E e^{2\pi itf(\xi)}\vert\d t\\
&=O\left(\frac{s}{n}+s\int_{1/n}^{1/\sqrt{n}}\left(\frac{\log n}{t n}\right)^{r/2}\d t+s\int_{1/\sqrt{n}}^{1/s}\left(\log n\right)^{r/2}t^{r/2}\d t\right)\\
&=O\left(\frac{s}{n}+s\left(\frac{\log n}{n}\right)^{r/2}\cdot (1/n)^{-r/2+1}+s(\log n)^{r/2}\cdot (1/s)^{r/2+1}\right)\\
&=O\left((\log n)^{r/2}\left(\frac{s}{n}+s^{-r/2}\right)\right)=O\left(\left(\log n\right)^{r/2}n^{-r/(r+2)}\right).\tag*{\qedhere}
\end{align*}
\end{proof}

\section{Real projections of complex nonsingular matrices}\label{sec:complex-to-real}

The anti-concentration inequality in the last section (\cref{lem:key}) was only for quadratic polynomials with real coefficients, whereas in \cref{thm:rank-L1,thm:rank-edit} we wish to consider complex polynomials as well. The following lemma will be an important tool to reduce from the complex case to the real case, and may be of independent interest. Recall that matrix norms in this paper are entrywise.
\begin{lem}
\label{lem:L1-real-to-complex}For every integer $r\geq 1$ and any $\eps>0$, there exists $c=c(r,\eps)>0$ such that the following holds. Let $A$ be a complex $r\times r$ matrix with $\vert \det A\vert\geq \eps$ and $\Vert A\Vert_\infty\le 1$. Let $\theta\in\left[-\pi,\pi\right]$ be a uniformly random phase. Then with probability at least $c$, the matrix $\Re\left(e^{i\theta}A\right)$ satisfies $\vert \det \Re\left(e^{i\theta}A\right)\vert\geq c$.
\end{lem}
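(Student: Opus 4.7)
The plan is to reduce the statement to a question about a single-variable polynomial on the unit circle. Writing $\Re(e^{i\theta}A) = (e^{i\theta}A + e^{-i\theta}\bar A)/2$ and using that $e^{i\theta}A + e^{-i\theta}\bar A = e^{i\theta}(A + e^{-2i\theta}\bar A)$, we compute
\[
\det\Re(e^{i\theta}A)\;=\;2^{-r}e^{ir\theta}\,\det(A+e^{-2i\theta}\bar A)\;=\;2^{-r}e^{ir\theta}\,g(e^{-2i\theta}),
\]
where I define $g(w):=\det(A+w\bar A)\in\CC[w]$. Since $\theta$ is uniform on $[-\pi,\pi]$, the point $w=e^{-2i\theta}$ is uniformly distributed on the unit circle $S^1$ (up to the double cover), so it suffices to prove that for some $c'=c'(r,\eps)>0$, $|g(w)|\ge c'$ with probability at least $c'$ when $w$ is uniform on $S^1$.

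Next, I would record three simple properties of $g$, all coming from the Leibniz expansion of the determinant together with $\Vert A\Vert_\infty\le 1$ and $|\det A|\ge\eps$:
\begin{itemize}
\item The constant term is $g(0)=\det A$, with $|g(0)|\ge\eps$.
\item The leading coefficient (of $w^r$) is $\det\bar A$, also of absolute value $\ge\eps$, so $g$ has degree exactly $r$.
\item Every coefficient of $g$ has absolute value at most some $C_r$ depending only on $r$ (a crude bound $C_r\le r!\cdot 2^r$ suffices).
\end{itemize}
Combining the lower bound on the leading coefficient with the uniform upper bound on all coefficients, any root $z$ of $g$ satisfies $\eps|z|^r\le C_r(1+|z|+\dots+|z|^{r-1})$, which yields $|z|\le R$ for some $R=R(r,\eps)$.

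Now factor $g(w)=a_r\prod_{k=1}^r(w-z_k)$, so that for $w\in S^1$ we have $|g(w)|\ge\eps\prod_k|w-z_k|$. The key estimate is that for any fixed $z\in\CC$ and any $\eta>0$, the arc-length measure of $\{w\in S^1:|w-z|<\eta\}$ is at most $C_0\,\eta$ for an absolute constant $C_0$. This is immediate if $||z|-1|\ge\eta$ (the set is empty), and otherwise follows from writing $z=re^{i\phi}$ with $|r-1|<\eta$ and estimating $|e^{i\theta}-z|$ via $|e^{i\theta}-e^{i\phi}|=2|\sin((\theta-\phi)/2)|$. Taking a union bound over the $r$ roots $z_1,\dots,z_r$, we obtain
\[
\Pr_\theta\bigl(\min_k|e^{-2i\theta}-z_k|<\eta\bigr)\;\le\;\frac{C_0r\eta}{\pi}.
\]
Choosing $\eta=\eta(r)$ small enough that this probability is at most $1/2$, we conclude that with probability at least $1/2$ over $\theta$ we have $|g(e^{-2i\theta})|\ge\eps\eta^r$, and therefore $|\det\Re(e^{i\theta}A)|\ge 2^{-r}\eps\eta^r$. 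Setting $c:=\min(1/2,2^{-r}\eps\eta^r)$ finishes the argument.

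The only genuinely non-routine step is recognizing the reduction to the polynomial $g(w)=\det(A+w\bar A)$ and the fact that both its constant term and its leading coefficient have absolute value $|\det A|\ge\eps$; everything else is a standard root-localization plus arc-length estimate. The main technical care is to ensure the arc-length bound $O(\eta)$ is absolute (uniform in the location of the root), which the brief case split above handles.
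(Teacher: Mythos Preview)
Your proof is correct. Both your argument and the paper's begin with the same reduction: writing $\det\Re(e^{i\theta}A)$ as $2^{-r}$ times the value on the unit circle of a polynomial whose constant term has absolute value $|\det A|\ge\eps$ (the paper uses $p(z)=\det(z^2A+\bar A)$, you use $g(w)=\det(A+w\bar A)$; these are equivalent after the substitution $w=z^{-2}$). The difference is in how the polynomial is then handled. The paper observes that integrating over the circle kills all nonconstant Fourier modes, so $\E|\det\Re(e^{i\theta}A)|\ge 2^{-r}|p(0)|\ge 2^{-r}\eps$, and combines this with the deterministic upper bound $|\det\Re(e^{i\theta}A)|\le r!$ via a reverse Markov inequality. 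Your approach instead factors $g$ over its roots, uses the lower bound on the leading coefficient, and appeals to the elementary arc-length estimate that a disk of radius $\eta$ meets $S^1$ in an arc of length $O(\eta)$. The paper's route is shorter and avoids any root analysis (in particular your root-localization bound $|z|\le R$ is never actually used in your own argument and can be deleted). Your route is slightly more quantitative in that it gives probability at least $1/2$ rather than a probability shrinking with $r$ and $\eps$, at the cost of a smaller determinant lower bound; either way the resulting $c(r,\eps)$ is of comparable quality.
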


\begin{proof}
Define the polynomial $p(z)=\det\left(z^{2}A+\overline{A}\right)$,
then
\[
\det \Re(e^{i\theta}A)=\det\left(\frac{e^{i\theta}A+e^{-i\theta}\overline{A}}{2}\right)=\frac{e^{-ir\theta}}{2^r}p(e^{i\theta}).
\]
Note that $\vert p(0)\vert=\vert \det\overline{A}\vert=\vert \det A\vert\geq \eps$, and let $\phi\in [-\pi,\pi]$ be the phase of $p\left(0\right)$. Now, for random $\theta\in\left[-\pi,\pi\right]$, consider the expression $e^{-i\phi}p(e^{i\theta})$. This expression can be written as a linear combination of powers $e^{ik\theta}$ for $k\geq 0$ with  complex coefficients. Note that  we have $\E\Re\left(a\cdot e^{ik\theta}\right)=0$ for each positive integer $k$ and each $a\in \CC$. Thus, $\E\Re\left(e^{-i\phi}p(e^{i\theta})\right)$ is purely determined by the constant coefficient of the polynomial $p$ and we have $\E\Re\left(e^{-i\phi}p(e^{i\theta})\right)=\Re\left(e^{-i\phi}p\left(0\right)\right)$. Thus,
\[
\E\left|\det \Re(e^{i\theta}A)\right|=2^{-r}\cdot \E\left|p(e^{i\theta})\right|\ge2^{-r}\cdot \E\Re(e^{-i\phi}p(e^{i\theta}))=2^{-r}\cdot \Re\left(e^{-i\phi}p\left(0\right)\right)=2^{-r}\cdot \vert p(0)\vert\ge\varepsilon2^{-r}.
\]
On the other hand, since the entries of the matrix $\Re\left(e^{i\theta}A\right)$
each have absolute value at most 1, we have $\left|\det\Re\left(e^{i\theta}A\right)\right|\le r!$.
So, by Markov's inequality,
\begin{align*}
\Pr\left(\left|\det \Re(e^{i\theta}A)\right|\ge c\right) & =1-\Pr\left(r!-\left|\det\Re(e^{i\theta}A)\right|> r!-c\right)\ge1-\frac{r!-\varepsilon2^{-r}}{r!-c}.
\end{align*}
For sufficiently small $c>0$, this probability is at least $c$.
\end{proof}

In this section we also prove that if a low-dimensional matrix has bounded entries and determinant bounded away from zero, then its least singular value is bounded away from zero as well.

\begin{lem}\label{lem-Be-large}
For some integer $q\geq 1$, let $B$ be a complex nonsingular $q\times q$ matrix with $\Vert B\Vert_\infty\le 1$. Then for any unit vector $e\in \CC^q$, the vector $Be$ satisfies $\Vert Be\Vert_1\geq (q!)^{-1}\cdot \vert \det B\vert$.
\end{lem}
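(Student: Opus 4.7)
The plan is to deduce the bound from the adjugate identity for $B^{-1}$, together with the elementary estimate $|\det C|\le q!$ for any $q\times q$ matrix $C$ with entries of modulus at most $1$ (which follows immediately from the permutation expansion of the determinant).

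Concretely, since $B$ is nonsingular, we have $B^{-1}=\operatorname{adj}(B)/\det(B)$, where $\operatorname{adj}(B)$ is the adjugate matrix. Writing $y=Be$, this gives the identity
\[\det(B)\cdot e\;=\;\operatorname{adj}(B)\cdot y.\]
Each entry of $\operatorname{adj}(B)$ is, up to sign, a $(q-1)\times(q-1)$ minor of $B$. Since $\Vert B\Vert_\infty\le 1$, the permutation expansion shows that every such minor has absolute value at most $(q-1)!$, so $\Vert\operatorname{adj}(B)\Vert_\infty\le (q-1)!$.

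Next, for any matrix $M\in\CC^{q\times q}$ and any vector $w\in\CC^q$, one has the pointwise bound
\[|(Mw)_i|\;=\;\Bigl|\sum_{j}M_{ij}w_j\Bigr|\;\le\;\Vert M\Vert_\infty\cdot\Vert w\Vert_1,\]
so $\Vert Mw\Vert_\infty\le\Vert M\Vert_\infty\cdot\Vert w\Vert_1$. Applying this to the identity above yields
\[|\det(B)|\cdot\Vert e\Vert_\infty\;\le\;(q-1)!\cdot\Vert Be\Vert_1.\]
Finally, since $e\in\CC^q$ is a unit vector (in the usual Euclidean sense), $\Vert e\Vert_\infty\ge \Vert e\Vert_2/\sqrt{q}=1/\sqrt{q}\ge 1/q$. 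Rearranging gives $\Vert Be\Vert_1\ge |\det(B)|/(q\cdot (q-1)!)=(q!)^{-1}|\det B|$, as required.

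There is no real obstacle here; the only subtlety is ensuring that the various constants combine to give exactly $(q!)^{-1}$. The proof is completely elementary once one recognises that the adjugate identity translates the desired inequality into a bound on the entrywise $\infty$-norm of $\operatorname{adj}(B)$, which is controlled by the $(q-1)!$-bound on $(q-1)\times(q-1)$ minors.
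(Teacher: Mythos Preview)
Your proof is correct and follows essentially the same approach as the paper: both use the adjugate identity $B^{-1}=\operatorname{adj}(B)/\det(B)$ together with the bound $\Vert\operatorname{adj}(B)\Vert_\infty\le(q-1)!$ coming from the permutation expansion of the $(q-1)\times(q-1)$ minors. The only cosmetic difference is that the paper bounds $\Vert e\Vert_1\ge\Vert e\Vert_2=1$ and then sums the entrywise bound over all $q$ coordinates, whereas you bound $\Vert e\Vert_\infty\ge\Vert e\Vert_2/\sqrt{q}\ge 1/q$ and use the single largest coordinate; both routes yield the identical constant $(q!)^{-1}$.
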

\begin{proof}
Let $v=Be$. First, we have $\Vert B^{-1}v\Vert_1=\Vert e\Vert_1\ge \Vert e\Vert_2=1$. On the other hand, observe that $B^{-1}$ can be calculated from the determinant of $B$ and the adjugate matrix of $B$. All entries of the adjugate matrix of $B$ have absolute value at most $(q-1)!$, and therefore all entries of $B^{-1}$ all have absolute value at most $(q-1)!\cdot \vert \det B\vert^{-1}$. Thus, each entry of the vector $e=B^{-1}v$ has absolute value at most $(q-1)!\cdot\vert \det B\vert^{-1}\cdot\Vert v\Vert_1$. It follows that
\[1\le \Vert B^{-1}v\Vert_1\leq q\cdot (q-1)!\cdot\vert \det B\vert^{-1}\cdot \Vert v\Vert_1,\]
from which the desired result immediately follows.
\end{proof}

\section{Deducing the main theorems\label{sec:rank}}

In this section we explain how to prove \cref{thm:rank-L1,thm:rank-edit}. Before getting into the details, we first observe that \cref{thm:rank-edit} actually follows from a slight variant of \cref{thm:rank-L1}, where we control ``coefficient-$L_1$-norm'' but we demand that certain coefficients lie in a certain finite set.

\begin{thm}
\label{thm:rank-L1-set}Let $\FF\in \lbrace \CC,\RR, \QQ\rbrace$. For any integer $r\geq 3$, any $0< \varepsilon\leq 1$, and any finite set $S\su \FF$ with $\vert s\vert\leq 1$ for all $s\in S$, there is a constant $C=C(r,\varepsilon, S)$ and a finite set $S^*=S^*(r, S)\su \FF$ such that the following holds. Let $f\in \FF[x_1,\dots,x_n]$ be a quadratic polynomial, let $\tilde{f}$ be the homogeneous degree-2 part of $f$ and assume that the coefficients of $\tilde{f}$ are elements of the set $S$. Let $\xi=(\xi_1,\dots,\xi_n)\in \Rad^n$, and suppose that we have
\[\sup_{x\in \FF}\Pr\left(f(\xi)=x\right)\geq C\cdot \frac{\left(\log n\right)^{r/2}}{n^{1-2/\left(r+2\right)}}.\]
Then there is a quadratic form $h\in\FF\left[x_{1},\dots,x_{n}\right]$
of rank strictly less than $r$ such that the sum of the absolute values of the coefficients of $\tilde{f}-h$ is at most $\varepsilon n^2$, and such that all coefficients of $h$ are elements of the set $S^*$.
\end{thm}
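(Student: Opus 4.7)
The plan for \cref{thm:rank-L1-set} closely follows the deductions of \cref{thm:rank-L1,thm:rank-edit} outlined above. I argue the contrapositive: suppose that no rank-$<r$ quadratic form $h$ with coefficients in a finite set $S^{*}=S^{*}(r,S)$ (to be constructed) satisfies the $L_1$-closeness condition that the sum of absolute values of the coefficients of $\tilde f - h$ is at most $\varepsilon n^2$. The goal is then to prove $\sup_{x\in\FF}\Pr(f(\xi)=x) < C(r,\varepsilon,S)\cdot(\log n)^{r/2}/n^{1-2/(r+2)}$.

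The heart of the argument is to convert this non-approximability hypothesis into the non-degeneracy hypothesis of \cref{lem:key}. Let $M$ be the symmetric $n\times n$ matrix encoding the coefficients of $\tilde f$, so that its entries lie in the finite set $S\cup\lbrace s/2:s\in S\rbrace$. A set-constrained version of the symmetric low-rank approximation lemma --- adapted from the announced lemmas ``lem-matrix-rank-close'' and ``lem-matrix-rank-close-set'' --- should then yield some $\delta=\delta(r,\varepsilon,S)>0$ and a set $T\subseteq[n]^r$ of $\delta n$ disjoint $r$-tuples for which each $M(i_1,\dots,i_r)$ is $\delta$-non-degenerate. In the real case $\FF\in\lbrace\RR,\QQ\rbrace$ one then invokes \cref{lem:key} directly. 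Crucially, the hypothesis of \cref{lem:key} only constrains the degree-2 coefficients of $f$, so the arbitrariness of the linear and constant parts of $f$ causes no difficulty.

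For $\FF=\CC$ I would reduce to the real case via \cref{lem:L1-real-to-complex} and \cref{lem-Be-large}. For any $x\in\CC$ and any phase $\theta\in[-\pi,\pi]$ we have $\Pr(f(\xi)=x)\leq\Pr(\Re(e^{i\theta}(f(\xi)-x))=0)$, so it suffices to exhibit a single phase $\theta_0$ for which $\Re(e^{i\theta_0}f)$ is appropriately anti-concentrated. The complex version of the approximation lemma produces many $r\times r$ submatrices of $M$ with $\vert\det\vert\geq c$; applying \cref{lem:L1-real-to-complex} to each such submatrix and averaging over $\theta$ singles out some $\theta_0$ for which a positive fraction of the corresponding real submatrices $\Re(e^{i\theta_0}A)$ satisfy $\vert\det\Re(e^{i\theta_0}A)\vert\geq c'$, and \cref{lem-Be-large} then turns such determinant lower bounds into $\delta$-non-degeneracy of the associated submatrices of $\Re(e^{i\theta_0}M)$. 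Applying the real case to $\Re(e^{i\theta_0}(f-x))$, whose degree-2 coefficients remain bounded by one, then concludes.

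The chief obstacle, as signposted in the paper's outline, is the set-constrained symmetric low-rank approximation lemma itself, and in particular the demand that $S^*$ depend only on $r$ and $S$ and \emph{not} on $\varepsilon$ or $n$. This amounts to a quantitative rigidity statement: any symmetric matrix with entries in the finite set $S\cup\lbrace s/2:s\in S\rbrace$ which is $L_1$-close to a rank-$<r$ matrix must in fact be $L_1$-close (at the cost of only a constant factor in $\varepsilon$) to a rank-$<r$ matrix whose entries lie in some canonical finite set determined by $r$ and $S$ alone. I envisage proving this via the robust linear independence machinery of \cref{sec:aux}: select $r-1$ well-separated ``characteristic'' rows of $M$ whose span approximates the row space of the putative rank-$<r$ approximant, and re-express the approximant as rational combinations of those characteristic rows with combining coefficients chosen from a finite set obtained by a compactness and rounding argument on the induced $r\times r$ subproblem.
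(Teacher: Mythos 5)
Your proposal follows essentially the same route as the paper's: take the contrapositive, use the set-constrained symmetric low-rank approximation lemma (\cref{lem-matrix-rank-close-set}) to produce many disjoint $\alpha$-independent $r$-tuples of rows of the coefficient matrix, convert these to $\delta$-non-degenerate $r\times n$ submatrices via \cref{lem-independent-non-degenerate-real} (or, for $\FF=\CC$, via the real-projection argument of \cref{lem:L1-real-to-complex,lem-Be-large}, i.e.\ \cref{lem-independent-non-degenerate-complex}), and then apply \cref{lem:key}. The only place you stay vaguer than the paper is the ``compactness and rounding'' justification that $S^*$ depends only on $r$ and $S$; the paper's argument there is purely algebraic (\cref{def:S'}: inverting $q\times q$ submatrices with entries in the finite set $S$ yields only finitely many possible coefficient vectors), which is important because $\QQ$ is not locally compact, but this is a presentational difference rather than a gap in strategy.
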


\begin{proof}[Proof of \cref{thm:rank-edit} given \cref{thm:rank-L1-set}]
First of all, by rescaling we may assume that the set $S$ in \cref{thm:rank-edit} satisfies $\vert s\vert\leq 1$ for all $s\in S$. Now, let $S^*=S^*(r,S)$ be the finite set in \cref{thm:rank-L1-set}, and let $\Delta$ be the minimum distance between two elements of $S^*\cup S$.

Let $f\in \FF[x_1,\dots,x_n]$ be as in \cref{thm:rank-edit}, and note that we may assume that $n$ is large with respect to $\eps$. Let $\tilde{f}$ be the homogeneous degree-2 part of $f$. By \cref{thm:rank-L1-set} applied with the error parameter $\varepsilon\cdot \Delta/2$, we either have the desired inequality on point probabilities of $f(\xi)$, or there is a quadratic form $h\in\FF\left[x_{1},\dots,x_{n}\right]$ of rank less than $r$ with coefficients in $S^*$ such that the sum of absolute values of $\tilde{f}-h$ is at most $\varepsilon\cdot (\Delta/2)\cdot n^2$. By the choice of $\Delta$, this implies that $h$ and $\tilde{f}$ differ in at most $(\varepsilon/2)\cdot n^2$ coefficients. Thus, $h$ and $f$ differ in at most $(\varepsilon/2)\cdot n^2+n+1\leq \varepsilon n^2$ coefficients (if $n$ is sufficiently large).
\end{proof}

Now, for the proofs of \cref{thm:rank-L1,thm:rank-L1-set} we will need a robust version of linear independence.

\begin{defn}\label{def:eps-independence}
Consider $\FF\in \lbrace \CC,\RR, \QQ\rbrace$. For any $0\leq \eps\leq 1$, let us say that vectors $v_1,\dots,v_q\in \FF^n$ are $\eps$-dependent (over $\FF$) if there are linearly dependent vectors $v_1',\dots,v_q'\in \FF^n$ with
\begin{equation*}
\Vert v_1-v_1'\Vert_1+\dots +\Vert v_q-v_q'\Vert_1\leq \eps n.
\end{equation*}
Otherwise, say that $v_1,\dots,v_q$ are $\eps$-independent.
\end{defn}

Note that the usual notion of being linearly independent corresponds to being 0-independent. Also note that the empty collection of vectors (taking $q=0$) is $\eps$-independent for any $0\leq \eps\leq 1$. In \cref{sec:aux} we will observe some more basic properties of $\eps$-independence, and prove some analogues of simple facts about ordinary linear independence.

Recall that we already introduced the notion of being $\delta$-non-degenerate in \cref{def:non-degenerate}, which can also be interpreted as a type of robust linear independence. The following lemma connects these two notions.

\begin{lem}\label{lem-independent-non-degenerate-real}
For any integer $r\geq 1$ and any $0<\eps\leq 1$, there is a constant $c=c(r,\eps)>0$ such that the following holds. Suppose that $v_1,\dots,v_r\in \RR^n$ are $\eps$-independent vectors with $\Vert v_i\Vert_\infty\le 1$ for each $i$. Then the $r\times n$ matrix with rows $v_1,\dots,v_r$ is $c$-non-degenerate.
\end{lem}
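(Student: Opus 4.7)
The plan is to prove the contrapositive: assuming the matrix $M$ with rows $v_1,\dots,v_r$ is \emph{not} $c$-non-degenerate for a suitably small constant $c=c(r,\eps)$, I would construct a small perturbation of $v_1,\dots,v_r$ that is linearly dependent, witnessing $\eps$-dependence. So suppose there is a unit vector $e=(e_1,\dots,e_r)\in\RR^r$ such that fewer than $cn$ columns $w$ of $M$ satisfy $|\langle w,e\rangle|\ge c$. Setting $u=\sum_{i=1}^r e_i v_i\in\RR^n$, this says that the set $S=\{j\in[n]:|u_j|\ge c\}$ has $|S|<cn$, while for $j\notin S$ we have $|u_j|<c$.

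Since $e$ is a unit vector, some coordinate $e_{i_0}$ satisfies $|e_{i_0}|\ge 1/\sqrt{r}$; after relabelling I may assume $i_0=r$. Define $v_i'=v_i$ for $i<r$ and $v_r'=v_r-u/e_r$. Then by construction
\[
e_1 v_1'+\dots+e_{r-1}v_{r-1}'+e_r v_r'=\sum_{i=1}^r e_i v_i-u=0,
\]
and this is a nontrivial linear relation because $e_r\ne 0$. It remains to bound $\|v_r-v_r'\|_1=|e_r|^{-1}\|u\|_1$. Split $\|u\|_1=\sum_{j\in S}|u_j|+\sum_{j\notin S}|u_j|$. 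Because $\|v_i\|_\infty\le 1$ for each $i$, the $j$-th column of $M$ has $\ell_2$-norm at most $\sqrt{r}$, so $|u_j|=|\langle w_j,e\rangle|\le\sqrt{r}$ for every $j$. Thus $\sum_{j\in S}|u_j|\le\sqrt{r}\cdot cn$, while $\sum_{j\notin S}|u_j|\le cn$. Combining gives $\|u\|_1\le cn(1+\sqrt{r})$ and therefore $\|v_r-v_r'\|_1\le\sqrt{r}\cdot cn(1+\sqrt{r})=cn(\sqrt{r}+r)$.

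Choosing $c=c(r,\eps)>0$ small enough that $c(\sqrt{r}+r)\le\eps$ makes the total perturbation $\sum_{i=1}^r\|v_i-v_i'\|_1\le\eps n$, contradicting $\eps$-independence. There is no serious obstacle here; the only slightly subtle point is getting a useful bound on $\|u\|_1$, which hinges on splitting the coordinates according to $S$ and exploiting both the smallness of $|u_j|$ off of $S$ and the boundedness $|u_j|\le\sqrt{r}$ on $S$. This separation, together with the uniform lower bound $|e_r|\ge 1/\sqrt{r}$ on the largest coordinate of the unit vector $e$, is exactly what makes the constants work out in terms of $r$ alone.
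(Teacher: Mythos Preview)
Your proof is correct. The contrapositive argument is clean: failure of $c$-non-degeneracy gives a unit vector $e$ for which the linear combination $u=\sum_i e_i v_i$ is small in $\ell_1$, and dividing by the largest coordinate of $e$ yields an explicit dependent perturbation of bounded $\ell_1$-cost. All the constants work out as you say.

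The paper takes a genuinely different route. Rather than arguing by contradiction, it proves directly that $M$ contains $\Omega_{\eps,r}(n)$ disjoint $r\times r$ submatrices each with determinant $\ge(\eps/2)^r$ (via \cref{lem-large-det-minor}, which guarantees one large minor, combined with a greedy extraction). Then for any unit vector $e$, each such submatrix $B_j$ has $\|B_j^T e\|_1\ge(r!)^{-1}|\det B_j^T|$ by \cref{lem-Be-large}, so some column of $B_j$ has large inner product with $e$; since the submatrices are disjoint, this exhibits many such columns. Your argument is shorter and more elementary for the real case---it bypasses both auxiliary lemmas entirely. The paper's approach, however, isolates the intermediate structural fact (many disjoint large-determinant minors), which is exactly what is reused in the proof of the complex analogue \cref{lem-independent-non-degenerate-complex}: there one applies \cref{lem:L1-real-to-complex} to each minor separately under a random phase, and your direct perturbation argument does not obviously adapt to that setting.
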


We will also need a variant of \cref{lem-independent-non-degenerate-real} for complex vectors, incorporating our lemma concerning real projections of complex matrices (\cref{lem:L1-real-to-complex}).

\begin{lem}\label{lem-independent-non-degenerate-complex}
For any integer $r\geq 1$ and any $0<\eps\leq 1$, there is a constant $c=c(r,\eps)>0$ such that the following holds. Suppose $v_1,\dots,v_r\in \CC^n$ are $\eps$-independent vectors with $\Vert v_i\Vert_\infty\le 1$ for each $i$. Furthermore, let $\theta\in [-\pi, \pi]$ be a uniformly random phase. Then with probability at least $c$, the $r\times n$ matrix with rows $\Re(e^{i\theta}v_1),\dots,\Re(e^{i\theta}v_r)\in \RR^n$ is $c$-non-degenerate.
\end{lem}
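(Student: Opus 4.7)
The plan is to reduce to Lemma~\ref{lem-independent-non-degenerate-real} by showing that, with probability at least some $c_0=c_0(r,\eps)>0$ over a uniformly random phase $\theta\in[-\pi,\pi]$, the real vectors $\Re(e^{i\theta}v_1),\dots,\Re(e^{i\theta}v_r)$ are $\eps'$-independent over $\RR$ for some constant $\eps'=\eps'(r,\eps)>0$. Once this is established, applying Lemma~\ref{lem-independent-non-degenerate-real} to those real vectors on the good event yields the desired non-degeneracy of $\Re(e^{i\theta}V)$.

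The bridge between the complex and real settings will be a ``many large-determinant submatrices'' intermediate. First, using the lemmas developed in \cref{sec:aux}, I would extract from $\eps$-independence over $\CC$ (together with $\Vert v_i\Vert_\infty\leq 1$) a constant $c_1=c_1(r,\eps)>0$ and a collection of at least $c_1\binom{n}{r}$ subsets $S\subseteq[n]$ of size $r$ such that the $r\times r$ submatrix $V_S$ of $V$ (the matrix with rows $v_1,\dots,v_r$) satisfies $|\det V_S|\geq c_1$. Applying Lemma~\ref{lem:L1-real-to-complex} to each such $V_S$ gives $|\det\Re(e^{i\theta}V_S)|\geq c_2$ with probability at least $c_2$ over $\theta$, for some $c_2=c_2(r,c_1)>0$. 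Taking expectations and using that the relevant count is always at most $\binom{n}{r}$, a reverse Markov bound shows that with probability at least $c_1c_2/2$ over $\theta$, the matrix $\Re(e^{i\theT{}})$ has at least $(c_1c_2/2)\binom{n}{r}$ subsets $S$ of size $r$ with $|\det \Re(e^{i\theta}V)_S|\geq c_2$.

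It remains to deduce $\eps'$-independence over $\RR$ of the rows $\Re(e^{i\theta}v_i)$ from this ``many large-determinant submatrices'' property. Suppose otherwise: then there would exist $\lambda\in\RR^r$ with $\Vert\lambda\Vert_\infty=1$ and $u=\lambda^T\Re(e^{i\theta}V)\in\RR^n$ satisfying $\Vert u\Vert_1\leq\eps' n$. For a parameter $\eta=\eta(r,c_1,c_2)>0$ to be chosen, Markov's inequality provides a set $J\subseteq[n]$ of size at least $(1-\eta)n$ on which $|u_j|\leq\eps'/\eta$. Then for any $r$-subset $S\subseteq J$, Lemma~\ref{lem-Be-large} applied to the transpose of $\Re(e^{i\theta}V)_S$ forces $|\det\Re(e^{i\theta}V)_S|\leq r\cdot r!\cdot\eps'/\eta$. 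Choosing $\eta$ small enough that $r$-subsets of $J$ account for all but a tiny fraction of $\binom{n}{r}$ (smaller than $c_1c_2/2$), one such $S$ must coincide with a guaranteed large-determinant submatrix, contradicting the previous bound when $\eps'$ is taken sufficiently small in terms of $r$, $c_1$, and $c_2$.

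The main obstacle is the very first step: translating the analytic $L_1$-flavored definition of $\eps$-independence over $\CC$ into the combinatorial statement that many $r\times r$ submatrices have determinant bounded away from zero. This conversion is precisely what the general-purpose robust linear independence lemmas of \cref{sec:aux} are designed to provide, so once they are in hand the rest of the argument is a relatively clean combination of Lemma~\ref{lem:L1-real-to-complex}, Lemma~\ref{lem-Be-large}, and Lemma~\ref{lem-independent-non-degenerate-real}.
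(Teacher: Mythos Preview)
Your overall architecture is close in spirit to the paper's, but there is a genuine gap in your first step, and the paper's route is in fact more direct than the one you outline.

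\textbf{The gap.} You assert that the lemmas of \cref{sec:aux} let you extract $c_1\binom{n}{r}$ size-$r$ subsets $S$ with $|\det V_S|\ge c_1$. They do not. \cref{lem-large-det-minor} gives \emph{one} such submatrix, and the greedy argument in the proof of \cref{lem-independent-non-degenerate-real} gives $\Omega(n)$ \emph{disjoint} ones; neither yields a positive proportion of all $\binom{n}{r}$ subsets. You explicitly flag this step as the main obstacle and then claim it is exactly what those lemmas provide, but that attribution is incorrect. The statement may well be true, but it would need its own proof, and your contradiction argument in the final paragraph is calibrated to the $\binom{n}{r}$ count, so as written the proof is incomplete.

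\textbf{How the paper proceeds, and how to fix your argument.} The paper avoids this issue entirely. It reuses the greedy argument from the proof of \cref{lem-independent-non-degenerate-real} to find $m\ge (\eps/(2r^2))n$ \emph{disjoint} $r\times r$ submatrices $B_1,\dots,B_m$ with $|\det B_j|\ge(\eps/2)^r$, applies \cref{lem:L1-real-to-complex} to each $B_j$, and uses reverse Markov exactly as you do to conclude that with probability $\ge c'/2$ at least $c'm/2$ of the $B_j$ satisfy $|\det\Re(e^{i\theta}B_j)|\ge c'$. At this point the paper does \emph{not} pass through $\eps'$-independence and \cref{lem-independent-non-degenerate-real}; it simply observes that having $\Omega(n)$ disjoint $r\times r$ submatrices with determinant bounded away from zero already yields $c$-non-degeneracy directly via \cref{lem-Be-large} (this is the same endgame as in the proof of \cref{lem-independent-non-degenerate-real}). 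Your detour through $\eps'$-independence is unnecessary. That said, your step~5 argument can also be made to work with $\Omega(n)$ disjoint submatrices in place of $\Omega(\binom{n}{r})$ arbitrary ones: since the submatrices are disjoint, at most $|[n]\setminus J|\le \eta n$ of them can meet $[n]\setminus J$, so for small enough $\eta$ one survives inside $J$ and the contradiction goes through. So your scheme is salvageable, but only after replacing the unproved $\binom{n}{r}$ claim with the disjoint-submatrix count that the paper actually establishes.
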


We defer the proofs of \cref{lem-independent-non-degenerate-real,lem-independent-non-degenerate-complex} to \cref{sec:aux}.

Now, the remaining ingredient for the proofs of \cref{thm:rank-L1,thm:rank-L1-set} is the following lemma. It states that if a symmetric matrix does not ``robustly'' have rank at least $r$, then it must be close (in terms of entrywise $L_1$-norm) to a symmetric matrix of rank less than $r$.

\begin{lem}\label{lem-matrix-rank-close}
Fix $\FF\in \lbrace \CC,\RR, \QQ\rbrace$ and an integer $r\geq 1$. Let $0<\alpha\leq 1$ and $0<\delta<1/r$ and let $A\in \FF^{n\times n}$ be a symmetric matrix with $\Vert A\Vert_\infty\le 1$. Suppose that there do not exist $\delta n$ disjoint $r$-tuples of $\alpha$-independent rows of $A$. Then there exists a symmetric matrix $H\in \FF^{n\times n}$ of rank less than $r$ such that $\Vert A-H\Vert_1\leq O(\delta+\alpha^{(6r)^{-r}})\cdot  n^2$.
\end{lem}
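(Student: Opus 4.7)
The plan is to construct $H$ as a CUR-style skeleton approximation through a small ``core'' set $I\subset[n]$ with $|I|=s<r$. After restricting to a principal submatrix $A'=A_{[J,J]}$ indexed by a large subset $J\subseteq[n]$, I would choose $I\subseteq J$ so that $A'_{[I,I]}$ is invertible and define
\[H' \;=\; A'_{[:,I]}\bigl(A'_{[I,I]}\bigr)^{-1} A'_{[I,:]} \;\in\; \FF^{J\times J},\]
extending by zeros to $H\in \FF^{n\times n}$. Because $A$ is symmetric, $A'_{[I,:]} = (A'_{[:,I]})^{T}$ and $A'_{[I,I]}$ is symmetric, so $H$ is automatically symmetric of rank at most $s<r$. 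This is precisely how the ``symmetric low-rank approximation'' difficulty, flagged in the introduction as the crux of the lemma, is sidestepped by construction.

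To choose $J$, I would extract a maximal family of pairwise disjoint $r$-tuples of $\alpha$-independent rows of $A$; by hypothesis, it has size less than $\delta n$ and uses fewer than $r\delta n$ indices, which I collect into a set $I_0$, setting $J=[n]\setminus I_0$. Maximality guarantees that \emph{every} $r$-tuple of rows of $A'=A_{[J,J]}$ (viewed as vectors in $\FF^{J}$) is $\alpha$-dependent. The cost of zeroing out the $I_0$ rows and columns of $H$ is
\[\sum_{(i,j)\,:\, i\in I_0 \text{ or } j\in I_0} |A_{ij}| \;\le\; 2|I_0|\cdot n \;\le\; 2r\delta n^2,\]
which fits comfortably inside the target error.

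Now the auxiliary lemmas on $\eps$-independence from \cref{sec:aux} should yield a set $I\subseteq J$ with $|I|=s<r$ such that (i) $|\det A'_{[I,I]}|\geq \alpha^{(6r)^{-r}}$, and (ii) writing $c_j = (A'_{[I,I]})^{-1} A'_{[I,j]}$ for $j\in J$ (the unique coefficients making the interpolation exact on coordinates in $I$) and $\eta_j = A'_{j} - c_j^{T} A'_{[I,:]}$, one has $\sum_{j\in J}\|\eta_j\|_1 \le \alpha^{(6r)^{-r}} n^2$. A short calculation, using symmetry of $A'$ in the step $A'_{ij} = A'_{ji} = c_j^T A'_{[I,i]} + \eta_j(i) = A'_{[i,I]} c_j + \eta_j(i) = H'_{ij}+\eta_j(i)$, yields the clean identity
\[(A'-H')_{ij} \;=\; \eta_j(i) \qquad \text{for all } i,j\in J\setminus I,\]
while $(A'-H')_{ij}=0$ whenever $i\in I$ or $j\in I$ (the interpolation is exact on $I$, hence so is $H'$ on those rows and columns). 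Therefore $\|A'-H'\|_1 \le \sum_{j\in J}\|\eta_j\|_1 \le \alpha^{(6r)^{-r}} n^2$, and combined with the $I_0$ contribution this gives $\|A-H\|_1 = O(\delta+\alpha^{(6r)^{-r}})n^2$ as required.

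The main obstacle is to establish (i) and (ii) simultaneously. Producing a small subset of rows that are $\alpha$-independent and whose span $L_1$-approximates most other rows of $A'$ is the easier half; forcing the column indices of the witness minor to coincide with the row indices $I$, so that the square submatrix $A'_{[I,I]}$ itself is invertible with $|\det A'_{[I,I]}|$ well controlled, is the subtle point, and crucially uses symmetry of $A$ (via the fact that rows and columns of $A$ span the same subspaces, modulo small $L_1$ perturbations). The tower-like exponent $(6r)^{-r}$ strongly suggests this is achieved by an iterative shrinking argument in which each of up to $r-1$ steps ``rotates out'' one coordinate and degrades the parameter polynomially; working out these steps should be exactly the content of the auxiliary $\eps$-independence lemmas in \cref{sec:aux}, and is where most of the subsequent work in the paper will presumably go.
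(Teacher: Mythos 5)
Your CUR-skeleton idea is an appealing alternative route: build $H'=A'_{[:,I]}\bigl(A'_{[I,I]}\bigr)^{-1}A'_{[I,:]}$ through a small \emph{principal} core so that symmetry of $H$ is automatic and does not need to be recovered at the end. Your bookkeeping is correct as far as it goes: $H'$ matches $A'$ exactly on rows and columns indexed by $I$, and the identity $(A'-H')_{ij}=\eta_j(i)$ for $i,j\in J\setminus I$ does hold by symmetry, so if you had (i) and (ii) the $L_1$ estimate would indeed close.

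However, the central step — producing $I$ with $|I|=s<r$ such that the \emph{principal} submatrix $A'_{[I,I]}$ is invertible with $|\det A'_{[I,I]}|$ quantitatively bounded below \emph{and} the rows $A'_{[I,:]}$ simultaneously control the $L_1$-error in (ii) — is a genuine gap, and your belief that the auxiliary $\eps$-independence lemmas in \cref{sec:aux} ``should yield'' such a set $I$ is not accurate. \cref{lem-choose-from-list} supplies a set of rows $w_1,\dots,w_q$ (indexed by some $I_{\mathrm{row}}$) that are $\eps$-independent and approximately span the other rows, and \cref{lem-large-det-minor} then produces a $q\times q$ submatrix of the $q\times n$ matrix built from these rows with determinant bounded below — but the \emph{column} index set of that witness minor is allowed to be arbitrary and is generically disjoint from $I_{\mathrm{row}}$. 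Nothing in \cref{sec:aux} aligns the two, and that alignment is exactly the principal-submatrix property your construction requires. The paper deliberately sidesteps this: it never insists the good minor be principal; instead it chooses a separate column set $I$ (via \cref{lem-large-det-minor-graph-version}, additionally forcing $A'$ and $B'$ to nearly agree on the $I\times I$ block), defines $H$ row by row via \cref{claim-linear-comb-prescribed-entries}, and then proves symmetry afterwards with the purely algebraic \cref{lem:funny-matrix-symmetric}, which says that a matrix whose rows lie in $\spn(w_1',\dots,w_q')$ and which satisfies $h_{ij}=h_{ji}$ for $i\in I$ is automatically symmetric. That is a completely different mechanism from the one you are relying on.

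To salvage your route you would need a \emph{quantitative} version of the classical fact that a symmetric matrix of rank $q$ over a field of characteristic $\ne 2$ has a nonsingular $q\times q$ principal submatrix. That fact is true (it follows by iterating Schur complements: if $A'_{[I,I]}$ is nonsingular and the Schur complement $S=A'_{[I^c,I^c]}-A'_{[I^c,I]}(A'_{[I,I]})^{-1}A'_{[I,I^c]}$ is nonzero, either some $S_{ii}\ne 0$ or some off-diagonal $S_{ij}\ne 0$, and in either case one can enlarge $I$ by one or two indices while keeping nonsingularity). Making this robust — controlling $|\det A'_{[I,I]}|$ from below after each step, while simultaneously maintaining the $L_1$-approximation property (ii), through up to $r-1$ rounds of Schur-complement updates and the attendant parameter degradation — is nontrivial extra work that is not contained in the paper's lemmas and that you have not carried out. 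So while the high-level plan is plausible and would be a genuinely different (and arguably cleaner) proof if it went through, as written the argument has a real hole at its core step.
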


Here, the implicit constant in the $O$-term may depend on $r$. We defer the proof of \cref{lem-matrix-rank-close} to \cref{sec:lem-matrix-rank-close}.

We remark that \cref{lem-matrix-rank-close} also implies the following corollary, which may be of independent interest: if a symmetric matrix is close to a matrix which has rank less than $r$, then it is close to a symmetric matrix with rank less than $r$.

\begin{cor}
\label{cor:symmetric-close}
Fix $\FF\in \lbrace \CC,\RR, \QQ\rbrace$ and an integer $r\geq 1$. Let $0<\alpha\leq 1$ and let $A\in \FF^{n\times n}$ be a symmetric matrix with $\Vert A\Vert_\infty\le 1$. Suppose that there is a matrix $A'\in \FF^{n\times n}$ of rank less than $r$ with $\Vert A-A'\Vert_1\leq \alpha n^2$. Then there exists a symmetric matrix $H\in \FF^{n\times n}$ of rank less than $r$ such that $\Vert A-H\Vert_1\leq O(\alpha^{1/(2\cdot (6r)^{r})})\cdot n^2$.
\end{cor}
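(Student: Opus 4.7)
The plan is to reduce \cref{cor:symmetric-close} to \cref{lem-matrix-rank-close} by showing that the existence of a (possibly non-symmetric) nearby rank-deficient matrix $A'$ forces $A$ to have very few disjoint $r$-tuples of ``robustly independent'' rows. The main technical step is simply a Markov-type pigeonholing on the row-wise contributions to $\|A-A'\|_1$.

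Concretely, first I would handle the trivial regime: if $\alpha$ is larger than some threshold depending only on $r$, then $H=0$ already satisfies the claim (since $\|A\|_1\le n^2$ and the bound $O(\alpha^{1/(2\cdot(6r)^r)})\cdot n^2$ absorbs a constant factor). So I may assume $\alpha$ is small, and set $\gamma=\sqrt\alpha$, chosen small enough that $2\gamma<1/r$. From $\|A-A'\|_1\le \alpha n^2=\gamma^2 n^2$ and Markov's inequality applied to the row-wise $L_1$ distances $\|\row_i(A)-\row_i(A')\|_1$, at most $\gamma n$ indices $i$ satisfy $\|\row_i(A)-\row_i(A')\|_1>\gamma n$; call the remaining indices (at least $(1-\gamma)n$ of them) \emph{good}.

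Next, because $A'$ has rank strictly less than $r$, every $r$-tuple of its rows is linearly dependent. For any $r$-tuple $(i_1,\dots,i_r)$ of good indices, the vectors $v_j':=\row_{i_j}(A')$ are therefore linearly dependent, and
\[
\sum_{j=1}^r \bigl\|\row_{i_j}(A)-v_j'\bigr\|_1\le r\gamma n.
\]
By \cref{def:eps-independence}, this means the rows $\row_{i_1}(A),\dots,\row_{i_r}(A)$ are $(r\gamma)$-dependent. Consequently, any $r$-tuple of rows of $A$ that is $(r\gamma)$-independent must include at least one bad index, so the number of pairwise disjoint $r$-tuples of $(r\gamma)$-independent rows of $A$ is at most the number of bad indices, which is at most $\gamma n$.

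Finally, I would apply \cref{lem-matrix-rank-close} with its parameters chosen as $\alpha_{\mathrm{lem}}=r\gamma$ and $\delta_{\mathrm{lem}}=2\gamma$; both hypotheses $\alpha_{\mathrm{lem}}\le 1$ and $\delta_{\mathrm{lem}}<1/r$ hold by the assumption $2\gamma<1/r$, and the hypothesis that there are no $\delta_{\mathrm{lem}}n=2\gamma n$ disjoint $r$-tuples of $\alpha_{\mathrm{lem}}$-independent rows follows from the bound of $\gamma n$ above. The conclusion is a symmetric $H$ of rank less than $r$ with
\[
\|A-H\|_1 \le O\!\left(2\gamma + (r\gamma)^{(6r)^{-r}}\right)\cdot n^2 = O\!\left(\gamma^{(6r)^{-r}}\right)\cdot n^2 = O\!\left(\alpha^{1/(2\cdot (6r)^r)}\right)\cdot n^2,
\]
where in the middle step I absorb the factor of $r^{(6r)^{-r}}$ into the implicit constant and use $\gamma<1$ together with $(6r)^{-r}<1$ to absorb the linear term $2\gamma$ into the dominant exponent. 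The only place one has to be a little careful is the parameter bookkeeping: one must verify that the trivial-regime threshold on $\alpha$ is consistent with the constraint $\delta<1/r$ in \cref{lem-matrix-rank-close}, and that the substitution $\gamma=\sqrt\alpha$ yields exactly the exponent $1/(2\cdot(6r)^r)$ claimed in the corollary. Beyond this, no substantial obstacle is expected, as all the work has effectively been packaged into \cref{lem-matrix-rank-close}.
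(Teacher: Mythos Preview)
Your proposal is correct and takes essentially the same approach as the paper: reduce to \cref{lem-matrix-rank-close} by showing that a nearby rank-deficient $A'$ precludes many disjoint $r$-tuples of $\sqrt\alpha$-independent rows. The only cosmetic difference is that the paper pigeonholes directly over the disjoint $r$-tuples (averaging the total $\|A-A'\|_1$ over $\alpha^{1/2}n$ disjoint blocks of rows to find one block with small $L_1$ distance), whereas you first apply Markov to individual rows and then argue that any $(r\gamma)$-independent tuple must hit a bad row; this costs you an extra factor of $r$ in the $\alpha$-parameter, which is harmlessly absorbed into the implicit constant.
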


\begin{proof}
It suffices to prove that $A$ cannot have $\alpha^{1/2} n$ disjoint $r$-tuples of $\alpha^{1/2}$-independent rows; we would then be able to apply \cref{lem-matrix-rank-close}. So, suppose there were $\alpha^{1/2} n$ disjoint $r$-tuples $(i_1,\dots,i_r)\in [n]^r$ such that the rows $\row_{i_1}(A),\dots, \row_{i_r}(A)$ are $\alpha^{1/2}$-independent.

As in \cref{def:non-degenerate}, for each of our $r$-tuples $(i_1,\dots,i_r)$ we use notation like $A(i_1,\dots,i_r)$ to represent the $r\times n$ submatrix of $A$ with these rows. Since our $r$-tuples are disjoint, for at least one of our $r$-tuples $(i_1,\dots,i_r)$ we have $\Vert A(i_1,\dots,i_r)-A'(i_1,\dots,i_r)\Vert_1\le \Vert A-A'\Vert_1/(\alpha^{1/2} n)\leq \alpha^{1/2} n^2$. Since $A'$ has rank less than $r$, the rows of $A'(i_1,\dots,i_r)$ are linearly dependent, so the rows of $A(i_1,\dots,i_r)$ are $\alpha^{1/2}$-dependent, a contradiction.
\end{proof}

In order to prove \cref{thm:rank-L1-set}, we also need the following variant of \cref{lem-matrix-rank-close}, stating that if the entries of $A$ lie in a finite set $S$, then the matrix $H$ can be chosen such that its entries lie in a finite set $S'$ (depending only on $S$ and $r$).

\begin{lem}\label{lem-matrix-rank-close-set}
For $\FF\in \lbrace \CC,\RR, \QQ\rbrace$, an integer $r\geq 1$, and a finite set $S\su \FF$ with $\vert s\vert\leq 1$ for all $s\in S$, there is a finite set $S'=S'(r,S)\su \FF$ such that the following holds: let $0<\alpha\leq 1$ and $0<\delta<1/r$ and let $A\in \FF^{n\times n}$ be a symmetric matrix all of whose entries are in $S$. Suppose that there do not exist $\delta n$ disjoint $r$-tuples of $\alpha$-independent rows of $A$. Then there exists a symmetric matrix $H\in \FF^{n\times n}$ of rank less than $r$ such that $\Vert A-H\Vert_1\leq O(\delta+\alpha^{(6r)^{-r}})\cdot  n^2$ and such that all entries of $H$ are elements of $S'$.
\end{lem}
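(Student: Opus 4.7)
The plan is to apply \cref{lem-matrix-rank-close} to obtain a symmetric matrix $H_0\in \FF^{n\times n}$ of rank $q<r$ with $\Vert A - H_0\Vert_1\leq K n^2$, where $K = O(\delta + \alpha^{(6r)^{-r}})$, and then to replace $H_0$ by a nearby symmetric rank-$q$ matrix $H$ whose entries are built algebraically from entries of $A$ and inverses of $q\times q$ principal submatrices of $A$. The structural observation driving the construction is that any symmetric matrix $M$ of rank $q$ (over a field of characteristic $\neq 2$) admits a nonsingular $q\times q$ principal submatrix $M(I,I)$, in which case $M = M([n], I)\cdot M(I, I)^{-1}\cdot M(I, [n])$. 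Accordingly, once a suitable $q$-subset $I\subseteq [n]$ with $A(I,I)$ nonsingular has been chosen, we define
\[H := A([n], I)\cdot A(I, I)^{-1}\cdot A(I, [n]).\]
This $H$ is symmetric (by symmetry of $A$), has rank at most $q < r$, and agrees with $A$ on rows and columns indexed by $I$. Each entry of $H$ on the block $J\times J$ (with $J = [n]\setminus I$) is a sum of at most $q^2$ triple products of two entries of $A$ (lying in $S$) and one entry of $A(I, I)^{-1}$; since $A(I, I)$ is a nonsingular $q\times q$ matrix with entries in the finite set $S$, it ranges over a finite set, and hence $A(I, I)^{-1}$ takes values in a finite set $T = T(r, S)$. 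So all entries of $H$ lie in a finite set $S'=S'(r, S)$ as required.

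The choice of $I$ must satisfy three conditions: (i) $A(I, I)$ is nonsingular; (ii) $H_0(I, I)$ is nonsingular with $\Vert H_0(I, I)^{-1}\Vert_\infty$ bounded by a constant depending only on $r$ and $S$; and (iii) $\Vert A(I, [n]) - H_0(I, [n])\Vert_1 = O(Kn)$, needed to control perturbation errors in the analysis below. Condition (iii) holds for a typical $I$ by averaging $\Vert A - H_0\Vert_1 \leq Kn^2$ over the $q$-subsets of $[n]$. For (ii), we pick $I$ to (approximately) maximize $\vert\det H_0(I, I)\vert$; if this maximum is too small, then $H_0$ is close in $L_1$ to a symmetric matrix of rank $q-1$, and we recursively reduce to the smaller rank (since $r$ and hence $q$ are bounded, the recursion terminates in at most $r$ steps). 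Finally, (i) is essentially automatic: since $A(I, I)$ has entries in the finite set $S$, it belongs to a finite set of matrices, so either it is nonsingular (in which case its determinant is automatically bounded away from zero by a constant depending on $r$ and $S$), or it is singular and we perturb $I$ using the flexibility supplied by (iii).

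To verify $\Vert A - H\Vert_1 \leq O(K)\cdot n^2$: since $H$ agrees with $A$ on rows and columns indexed by $I$, the difference equals the Schur complement $A/A(I, I) = A(J, J) - A(J, I)\,A(I, I)^{-1}\,A(I, J)$ supported on $J\times J$. Writing $A = H_0 + E$ with $\Vert E\Vert_1\leq Kn^2$, observing that the corresponding Schur complement of $H_0$ vanishes (since $H_0$ has rank $q$ and $H_0(I,I)$ is nonsingular), and using the perturbation identity $A(I,I)^{-1} - H_0(I,I)^{-1} = -H_0(I,I)^{-1}E(I,I)A(I,I)^{-1}$, we expand $A/A(I,I)$ into a sum of error terms. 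Each such term can be bounded by a product of $L_1$-norms of submatrices of $E$ (crucially using $\Vert E(I, [n])\Vert_1 = O(Kn)$ from (iii)) together with $L_\infty$-bounded factors from $A$, $H_0$, $A(I, I)^{-1}$, and $H_0(I, I)^{-1}$ (bounded by (ii) and the finite-set property of $S$), which yields the desired $O(Kn^2)$ bound. The most delicate step is the rank-reduction within condition (ii): when no well-conditioned $q\times q$ principal minor of $H_0$ exists, the reduction to an approximating symmetric matrix of rank $q-1$ must be executed carefully so as to preserve an $L_1$-closeness bound to $A$ of the same order $O(Kn^2)$; this is the main obstacle I anticipate in carrying out the proof.
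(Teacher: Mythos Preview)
Your approach is genuinely different from the paper's, and conceptually appealing: black-box \cref{lem-matrix-rank-close} to get $H_0$, then post-process via the Schur-complement formula $H=A([n],I)A(I,I)^{-1}A(I,[n])$ so that the entries of $H$ are manifestly built from $S$. The paper instead re-runs the proof of \cref{lem-matrix-rank-close} and tracks at each stage that all entries stay in an explicitly described finite set (it introduces an operator $\sigma_r(S)$ and shows $A^*\in S^{n\times n}$, $B^*\in(\sigma_r(S))^{n\times n}$, and finally $H\in(\sigma_r(\sigma_r(\sigma_r(S))))^{n\times n}$, via two auxiliary lemmas that control the entries produced by \cref{claim-linear-comb-prescribed-entries}).

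However, your perturbation analysis has a gap beyond the rank-reduction step you flag. Writing $E=A-H_0$, the Schur complement $A/A(I,I)$ expands (after using $H_0/H_0(I,I)=0$ and your identity $B-C=-CE(I,I)B$) into terms one of which is essentially $A(J,I)\,C\,E(I,I)\,B\,A(I,J)$. Since $A(J,I)C$ and $BA(I,J)$ have bounded entries, this term has entrywise $L_1$ norm of order $n^2\cdot\Vert E(I,I)\Vert_1$. Your condition (iii) only gives $\Vert E(I,[n])\Vert_1=O(Kn)$, hence $\Vert E(I,I)\Vert_1=O(Kn)$, which yields $O(Kn^3)$ rather than the required $O(Kn^2)$. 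To salvage this you would need $\Vert E(I,I)\Vert_1=O(K)$; averaging over $q$-subsets does give this for the off-diagonal part, but the diagonal contribution is $(q/n)\sum_i|E_{ii}|$, and nothing in \cref{lem-matrix-rank-close} bounds $\sum_i|E_{ii}|$ better than $Kn^2$. Relatedly, your condition (ii) demands $\Vert H_0(I,I)^{-1}\Vert_\infty$ bounded by a constant depending only on $r$ and $S$, but $H_0$ comes from \cref{lem-matrix-rank-close} with no entrywise control independent of $\alpha$, so it is not clear such an $I$ exists. These issues suggest that treating $H_0$ as a black box is the wrong granularity here; the paper avoids them precisely by working inside the construction.
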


Now, we can deduce \cref{thm:rank-L1,thm:rank-L1-set}. The proofs are virtually the same (the only difference is whether we use \cref{lem-matrix-rank-close-set} or \cref{lem-matrix-rank-close}), so we present both proofs together.

\begin{proof}
[Proof of \cref{thm:rank-L1,thm:rank-L1-set}]
First, choose some small $0<\alpha\leq 1$ and $0<\delta<1/r$ such that the $O(\delta+\alpha^{(6r)^{-r}})$-term in \cref{lem-matrix-rank-close,lem-matrix-rank-close-set} is at most $\eps$. Also, note that we may assume $n$ is sufficiently large with respect to $\eps$. For each $i,j$, let $a_{ij}$ be the coefficient of $x_ix_j$ and in $f$ (so $a_{ii}$ is the coefficient of $x_i^2$), and define the (symmetric) ``coefficient matrix'' $A=(a_{ij})_{i,j}$. Note that by our assumptions on $f$, $\Vert A\Vert_\infty\le 1$. We consider two cases.

{\bf Case 1:} $A$ does not have $\delta n$ disjoint $r$-tuples of $\alpha$-independent rows

By \cref{lem-matrix-rank-close} (and our choice of $\delta$ and $\alpha$), there exists a symmetric matrix $H=(h_{ij})_{1\leq i,j\leq n}\in \FF^{n\times n}$ of rank less than $r$ such that $\Vert A-H\Vert_1\leq\eps n^2$. In the setting of \cref{thm:rank-L1-set}, we can instead apply \cref{lem-matrix-rank-close-set} to get the same conclusion, with the additional property that all entries of $H$ lie in some fixed finite set $S'$ that only depends on $r$ and $S$.

Now, let $h\in \FF[x_1,\dots,x_n]$ be the quadratic form defined by $h(x)=\frac{1}{2}x^T H x=\sum_{i<j} h_{ij}x_ix_j+\sum_i (h_{ii}/2)x_i^2$, which also has rank less than $r$ (and in the setting of \cref{thm:rank-L1-set}, the coefficients of $h$ lie in the finite set $S^*=S'\cup \lbrace s/2: s\in S'\rbrace$). Let $\tilde f$ be the homogeneous degree-2 part of $f$. Then, the sum of the absolute values of the coefficients of $\tilde f-h$ is
\[
\sum_{i<j} |a_{ij}-h_{ij}|+\sum_i  |a_{ii}-h_{ii}/2|\le \frac{1}{2}\Vert A-H\Vert_1+\sum_i |a_{ii}/2|\le (\eps/2)n^2+n/2.
\]

For the proof of \cref{thm:rank-L1-set}, this already gives the desired conclusion (for sufficiently large $n$). In the setting of \cref{thm:rank-L1}, we additionally note that the sum of the absolute values of the coefficients of $\tilde f-f$ is at most $n+1=o(n^2)$.

{\bf Case 2:} $A$ has $\delta n$ disjoint $r$-tuples of $\alpha$-independent rows\nopagebreak

In this case, we use \cref{lem:key} to prove that
\[\sup_{x\in \FF}\Pr\left(f(\xi)=x\right)=O\left(\frac{\left(\log n\right)^{r/2}}{n^{1-2/\left(r+2\right)}}\right),\]
showing that the assumptions of \cref{thm:rank-L1,thm:rank-L1-set} cannot hold for large $C$. (For the rest of the proof, all asymptotic notation treats $r$, $\delta$ and $\alpha$ as fixed constants.) Let $T_A\su [n]^r$ be a collection of $\delta n$ disjoint $r$-tuples, such that for any $(i_1,\dots,i_r)\in T_A$ the corresponding rows of $A$ are $\alpha$-independent.

If $\FF=\RR$ or $\FF=\QQ$, then by \cref{lem-independent-non-degenerate-real}, for each $(i_1,\dots,i_r)\in T_A$, the $r\times n$ matrix of $A(i_1,\dots,i_r)$, defined as in \cref{def:non-degenerate}, is $c$-non-degenerate for $c=\Omega(1)$. Thus, \cref{lem:key} implies that for random $\xi=(\xi_1,\dots,\xi_n)\in \Rad^n$, we have
\[\sup_{x\in \FF}\Pr\left(f(\xi)=x\right)\leq \sup_{x\in \FF}\Pr\left(\left|f\left(\xi\right)-x\right|\le n^{2/\left(r+2\right)}\right)=O\left(\frac{\left(\log n\right)^{r/2}}{n^{1-2/\left(r+2\right)}}\right),\]
as desired.

For the case $\FF=\CC$, choose $c=\Omega(1)$ such that if $\theta\in [-\pi,\pi]$ is a uniformly random phase then for each $(i_1,\dots,i_r)\in T_A$, the matrix $\Re(e^{i\theta}A)(i_1,\dots,i_r)$ is $c$-degenerate with probability at least $c$. Such a $c$ exists by \cref{lem-independent-non-degenerate-complex}. Then, let $T_\theta$ be the set of all $(i_1,\dots,i_r)\in T_A$ such that $\Re(e^{i\theta}A)(i_1,\dots,i_r)$ is $c$-degenerate. For random $\theta$ we have $\E|T_\theta|\ge c\delta n$, so we can fix $\theta$ such that $|T_\theta|\ge c\delta n$. Then, let $f^*$ be the polynomial obtained from $e^{i\theta} f$ by taking the real part of each coefficient, so for $\xi\in \Rad^n$ \cref{lem:key} implies
\[\sup_{x\in \CC}\Pr\left(f(\xi)=x\right)\leq\sup_{x\in \CC}\Pr\left(\Re(e^{i\theta}f(\xi))=\Re(e^{i\theta}x)\right)= \sup_{x\in \RR}\Pr\left(f^*(\xi)=x\right)=O\left(\frac{\left(\log n\right)^{r/2}}{n^{1-2/\left(r+2\right)}}\right),\]
as desired.
\end{proof}

\section{Lemmas on robust linear independence\label{sec:aux}}

In this section we prove \cref{lem-independent-non-degenerate-real,lem-independent-non-degenerate-complex}, and several other auxiliary lemmas concerning $\varepsilon$-independence (defined in \cref{def:eps-independence}). Throughout this section we fix $\FF\in \lbrace \CC, \RR, \QQ\rbrace$.

First, with a view towards proving \cref{lem-independent-non-degenerate-real,lem-independent-non-degenerate-complex}, we start with the fact that if the rows of a $q\times n$ matrix are $\varepsilon$-independent then there is a $q\times q$ submatrix with large determinant.

\begin{lem}\label{lem-large-det-minor}
For any $0\leq \eps\leq 1$ and any $\eps$-independent vectors $v_1,\dots,v_q\in \FF^n$, the $q\times n$ matrix with rows $v_1,\dots,v_q$ has a $q\times q$ submatrix whose determinant has absolute value at least $\eps^q$.
\end{lem}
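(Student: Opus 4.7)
The plan is to prove the contrapositive by induction on $q$: if every $q\times q$ submatrix of the matrix $M$ with rows $v_1,\dots,v_q$ has determinant of absolute value strictly less than $\eps^q$, then $v_1,\dots,v_q$ are $\eps$-dependent. The base case $q=1$ is immediate---every entry of $v_1$ then has absolute value less than $\eps$, so $\Vert v_1\Vert_1<\eps n$ and $v_1'=0$ (the zero vector, which is linearly dependent as a singleton) witnesses $\eps$-dependence.

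For the inductive step, assuming the claim for $q-1$, consider $v_1,\dots,v_q$ whose matrix $M$ has every $q\times q$ subdeterminant of absolute value less than $\eps^q$. I split into two cases depending on whether the rows $v_2,\dots,v_q$ already satisfy the analogous hypothesis one level down. If every $(q-1)\times(q-1)$ submatrix of the matrix with rows $v_2,\dots,v_q$ has determinant of absolute value less than $\eps^{q-1}$, the inductive hypothesis yields linearly dependent $v_2',\dots,v_q'$ with $\sum_{i\ge 2}\Vert v_i-v_i'\Vert_1\le\eps n$, and setting $v_1'=v_1$ finishes the case.

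Otherwise, after permuting columns we may assume that the submatrix $D'$ of rows $v_2,\dots,v_q$ and columns $1,\dots,q-1$ satisfies $|\det D'|\ge\eps^{q-1}$, and in particular is invertible. We solve the square linear system $v_1(j)=\sum_{i=2}^q \alpha_iv_i(j)$ for $j=1,\dots,q-1$ to obtain coefficients $\alpha_i\in\FF$, and set $w=v_1-\sum_{i=2}^q\alpha_iv_i$, which by construction vanishes on the first $q-1$ coordinates. For each remaining coordinate $j\ge q$, the Schur complement identity, applied to the $q\times q$ block matrix with $D'$ in the upper-left block, the column $(v_2(j),\dots,v_q(j))^T$ on the right, and the row $(v_1(1),\dots,v_1(q-1),v_1(j))$ on the bottom, gives $w(j)=\det B_j/\det D'$, where $B_j$ is, up to row permutation, the $q\times q$ submatrix of $M$ using columns $\{1,\dots,q-1,j\}$. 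By hypothesis $|\det B_j|<\eps^q$, so $|w(j)|<\eps$ for every $j\ge q$, whence $\Vert w\Vert_1<\eps n$. Taking $v_1'=v_1-w\in\spn(v_2,\dots,v_q)$ and $v_i'=v_i$ for $i\ge2$ yields a linearly dependent tuple with $\sum_i\Vert v_i-v_i'\Vert_1<\eps n$, completing the induction.

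The key observation that makes the induction close is the Schur complement identity expressing each coordinate of the residual $w$ as the ratio of a $q\times q$ minor of $M$ to the ``privileged'' minor $\det D'$. This is precisely what lets the hypothesis $|\det B_j|<\eps^q$ combine with the lower bound $|\det D'|\ge\eps^{q-1}$ into the per-coordinate bound $|w(j)|<\eps$, and explains why the exponent $\eps^q$ in the statement is the natural one; everything else in the argument is essentially bookkeeping around this matching of exponents.
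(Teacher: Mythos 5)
Your proof is correct and takes essentially the same route as the paper: both argue by induction on $q$, using a large $(q-1)\times(q-1)$ minor together with the smallness of the $q\times q$ minors to show, via the cofactor/Schur-complement relation, that one row can be moved into the span of the others at $L_1$-cost less than $\eps n$ (the paper performs the same computation when it changes the entry $a_{qj}$ by at most $\eps^q/|\det M'_{I'}|\le\eps$). The only organizational difference is that the paper obtains this lemma as the $\delta=0$, edgeless-graph special case of the more general \cref{lem-large-det-minor-graph-version} and runs the induction in the forward direction, whereas you argue the contrapositive directly; the extra case split in your inductive step compensates for not having the $\eps$-independence hypothesis automatically available for the sub-collection $v_2,\dots,v_q$.
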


\cref{lem-large-det-minor} will be an immediate consequence of a more general lemma (\cref{lem-large-det-minor-graph-version}) that we prove later in this section. Now we prove \cref{lem-independent-non-degenerate-real}.

\begin{proof}[Proof of \cref{lem-independent-non-degenerate-real}]
We will take $c=\eps^r/(2^r r!r)$. Let $M$ be the $r\times n$ matrix with rows $v_1,\dots,v_r$.

First, we claim that $M$ has $m\geq \eps/(2r^2)\cdot n$ disjoint $r\times r$ submatrices $B_1,\dots,B_m$ whose determinants have absolute value at least $(\eps/2)^r$. Indeed, consider a maximal collection such disjoint submatrices and suppose that this collection consists of fewer than $\eps/(2r^2)\cdot n$ submatrices. Let $M'\in \RR^{r\times n}$ be obtained from $M$ by setting all the entries in our identified submatrices to zero. By maximality, every $r\times r$ submatrix of $M'$ has determinant bounded in absolute value by $(\eps/2)^r$. On the other hand, since $\Vert M\Vert_\infty\le 1$, we have $\Vert M-M'\Vert_1\leq \eps/(2r^2)\cdot n\cdot r^2\leq (\eps/2) n$. Since the rows of $M$ are $\eps$-independent, this shows that the rows of $M'$ are $(\eps/2)$-independent. But then, by \cref{lem-large-det-minor}, the matrix $M'$ has a $r\times r$ submatrix whose determinant has absolute value at least $(\eps/2)^r$, which is a contradiction.

Now, in order to show that $M$ is $c$-non-degenerate, we need to show that for every unit vector $e\in \RR^r$, there are at least $c n$ columns $w$ of $M$ such that $\vert \langle w,e\rangle \vert\geq c$. As $m\geq \eps/(2r^2)\cdot n\geq cn$, it suffices to show that for each $j=1,\dots,m$ there is a column $w$ of $B_j$ with $\vert\langle w,e\rangle\vert\geq c$. This is equivalent to showing that the vector $B_j^{T} e\in \RR^r$ has at least one entry with absolute value at least $c$. However, since $\vert \det B_j^T\vert\geq (\eps/2)^r$, \cref{lem-Be-large} implies that $\Vert B_j^{T} e\Vert_1\geq (r!)^{-1}\cdot (\eps/2)^r= r\cdot c$. Therefore one of the $r$ entries of $B_j^{T} e$ must indeed have absolute value at least $c$. This finishes the proof of \cref{lem-independent-non-degenerate-real}.
\end{proof}

Next, to prove \cref{lem-independent-non-degenerate-complex}, we modify the proof \cref{lem-independent-non-degenerate-real} to incorporate our lemma concerning real projections of complex matrices (\cref{lem:L1-real-to-complex}).

\begin{proof}[Proof of \cref{lem-independent-non-degenerate-complex}]
Let $M$ be the $r\times n$ matrix with rows $v_1,\dots,v_r$. As in the proof of \cref{lem-independent-non-degenerate-real}, in $M$ we can find $m\geq \eps/(2r^2)\cdot n$ disjoint $r\times r$ submatrices $B_1,\dots, B_m$ whose determinants have absolute value at least $(\eps/2)^r$.

Consider a random phase $\theta\in [-\pi, \pi]$. By \cref{lem:L1-real-to-complex}, for some $0<c'<1$ only depending on $r$ and $\eps$, for each $1\le j\le m$, with probability at least $c'$ we have $|\det \Re(e^{i\theta}B_j)|\ge c'$. Let $J$ be the random set of $j$ such that this holds, and observe that $\E |J|\ge c'm$. Then, since $|J|-c'm/2\le m$ we have $m\Pr(|J|-c'm/2\ge 0)\ge \E[|J|-c'm/2]$, so
\[
\Pr(|J|\ge c'm/2)=\Pr(|J|-c'm/2\geq 0)\ge \frac{\E[|J|-c'm/2]}{m}\geq \frac{c'm-c'm/2}{m}=\frac{c'}{2}.
\]
But, if $|J|\ge c'm/2$ then $\Re(e^{i\theta}M)$ has $c'm/2\geq c'\eps/(4r^2)\cdot n$ disjoint $r\times r$ submatrices whose determinants have absolute value at least $c'$. As in the proof of \cref{lem-independent-non-degenerate-real}, it follows from \cref{lem-Be-large} that $\Re(e^{i\theta}M)$ is $c$-non-degenerate for some $0<c<c'/2$ depending only on $\eps$ and $r$ (via $c'$). The desired result follows.
\end{proof}

In the remainder of this section, we prove some simple facts about $\eps$-independence that will be useful for the proof of \cref{lem-matrix-rank-close} in \cref{sec:lem-matrix-rank-close}. From now on, fix any $\FF\in \lbrace \CC, \RR, \QQ\rbrace$.

The following lemma says that for $\eps$-dependent vectors $v_1,\dots,v_q$ with entries of absolute value at most 1, the vectors $v_1',\dots,v_q'$ in \cref{def:eps-independence} can be chosen in such a way that their entries have absolute value at most $q+1$.

\begin{lem}\label{lem-eps-dependent-change-bounded}
Let $0\leq \eps\leq 1$ and let $v_1,\dots,v_q\in \FF^n$ be $\eps$-dependent such that $\Vert v_i\Vert_\infty\le 1$ for each $i$. Then there are linearly dependent vectors $v_1',\dots,v_q'\in \FF^n$ with
\[\Vert v_1-v_1'\Vert_1+\dots +\Vert v_q-v_q'\Vert_1\leq \eps n\]
and such that all entries of the vectors $v_1',\dots,v_q'$ have absolute value at most $q+1$.
\end{lem}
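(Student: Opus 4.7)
The idea is to make only \emph{one} of the approximating vectors $v_1',\dots,v_q'$ differ from the corresponding $v_i$, so that all of the $L_1$-error is concentrated in a single coordinate of the tuple and no factor $2$ is lost.

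By the definition of $\eps$-dependence, there are linearly dependent vectors $v_1'',\dots,v_q''\in\FF^n$ with $\sum_{i=1}^{q}\|v_i-v_i''\|_1\le\eps n$. Pick nonzero coefficients $\mu_1,\dots,\mu_q\in\FF$ witnessing this linear dependence, i.e.\ $\sum_i \mu_i v_i''=0$. After permuting indices we may assume $|\mu_1|=\max_i |\mu_i|>0$, and after dividing all $\mu_i$ by $\mu_1$ we may assume $\mu_1=1$ and $|\mu_i|\le 1$ for $i\ge 2$.

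Now define
\[
v_i'=v_i\quad\text{for }i\ge 2,\qquad v_1'=-\sum_{i=2}^{q}\mu_i\,v_i.
\]
Then $\sum_{i=1}^{q}\mu_i v_i'=v_1'+\sum_{i\ge 2}\mu_i v_i=0$, so $v_1',\dots,v_q'$ are linearly dependent. For $i\ge 2$ we trivially have $\|v_i'\|_\infty=\|v_i\|_\infty\le 1\le q+1$, while
\[
\|v_1'\|_\infty\le\sum_{i=2}^{q}|\mu_i|\,\|v_i\|_\infty\le(q-1)\cdot 1\le q+1,
\]
as required for the entrywise bound.

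The only nontrivial estimate is on $\sum_i\|v_i-v_i'\|_1=\|v_1-v_1'\|_1$. Here we use that $v_1''+\sum_{i\ge 2}\mu_i v_i''=0$ to rewrite
\[
v_1-v_1'=v_1+\sum_{i=2}^{q}\mu_i v_i=(v_1-v_1'')+\sum_{i=2}^{q}\mu_i(v_i-v_i''),
\]
and then apply the triangle inequality together with $|\mu_i|\le 1$:
\[
\|v_1-v_1'\|_1\le\|v_1-v_1''\|_1+\sum_{i=2}^{q}|\mu_i|\cdot\|v_i-v_i''\|_1\le\sum_{i=1}^{q}\|v_i-v_i''\|_1\le\eps n,
\]
which is the desired bound. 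The step that required the most care—and which would have been the main obstacle under a naive ``truncate each vector independently'' approach—is keeping the total error at $\eps n$ rather than $2\eps n$; concentrating all the modification in the single vector $v_1'$ avoids the duplication of the error terms that such a truncation would have introduced.
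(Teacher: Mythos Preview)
Your proof is correct, but it takes a different route from the paper's. The paper starts with any witness $v_1',\dots,v_q'$ to $\eps$-dependence and then, for every coordinate index where some entry of the $v_j'$ exceeds $q+1$ in absolute value, zeros out that coordinate in \emph{all} of the $v_j'$ simultaneously; one checks that this keeps the vectors linearly dependent and can only decrease the total $L_1$-error (since in such a coordinate the error was already at least $q$, while after zeroing it is at most $q$ because $\|v_j\|_\infty\le 1$). Your approach instead extracts a normalised dependence relation from the witness and rebuilds just one vector $v_1'$ as a bounded combination of the original $v_2,\dots,v_q$, leaving the others untouched.

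Both arguments are short. Yours has the pleasant feature that it actually yields a stronger entrywise bound ($\|v_i'\|_\infty\le 1$ for $i\ge 2$ and $\|v_1'\|_\infty\le q-1$), and it makes the structure of the dependence explicit. The paper's coordinate-zeroing trick is perhaps more direct in that it never looks at the coefficients of the linear relation. One small wording issue: you write ``pick nonzero coefficients $\mu_1,\dots,\mu_q$'', but linear dependence only guarantees coefficients that are \emph{not all} zero; your argument only needs $\max_i|\mu_i|>0$, so this is just a phrasing slip rather than a gap.
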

\begin{proof}
By the definition of $v_1,\dots,v_q\in \FF^n$ being $\eps$-dependent, there are linearly dependent vectors $v_1',\dots,v_q'\in \FF^n$ with
\[\Vert v_1-v_1'\Vert_1+\dots +\Vert v_q-v_q'\Vert_1\leq \eps n.\]
It may be the case that for one or more indices $i$, the $i$-th entry of one of the vectors $v_1',\dots,v_q'$ has absolute value larger than $q+1$. For all such $i$, let us change the $i$-th entry of each of the vectors $v_1',\dots,v_q'$ to zero. It is not hard to see that this does not increase the value of $\Vert v_1-v_1'\Vert_1+\dots +\Vert v_q-v_q'\Vert_1$, and that the new vectors $v_1',\dots,v_q'$ are still linearly dependent.
\end{proof}

The next lemma is a generalisation of \cref{lem-large-det-minor} where we can specify some forbidden pairs of vectors.

\begin{lem}\label{lem-large-det-minor-graph-version}
Let $0\leq \eps\leq 1$ and $0\leq \delta\leq 1$. For some integer $q\geq 0$, let $M$ be a $q\times n$ matrix whose rows are $(\varepsilon+q(q-1)\delta)$-independent and whose entries have absolute value at most $1$. Consider a graph $G$ on the vertex set $[n]$ in which every vertex has degree at most $\delta n$. Then there exists a $q$-vertex independent set $I\su [n]$ of $G$, such that the $q\times q$ matrix $M_I$ satisfies $|\det M_I|\ge \eps^q$.
\end{lem}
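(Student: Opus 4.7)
The plan is to proceed by induction on $q$. The base case $q=0$ is immediate: the $0\times 0$ empty submatrix has determinant $1\geq\eps^0$, and $\emptyset$ is trivially independent in $G$. For the inductive step, write $v_i=\row_i(M)$ and first note that $v_2,\dots,v_q$ are also $(\eps+q(q-1)\delta)$-independent, because any linear dependence among perturbations $v_2',\dots,v_q'$ would extend (taking $v_1'=v_1$) to a linear dependence among $v_1,v_2',\dots,v_q'$. Since $q(q-1)\geq (q-1)(q-2)$, the inductive hypothesis applied to the $(q-1)\times n$ matrix $M'$ with rows $v_2,\dots,v_q$ (using the same graph $G$) yields an independent set $J=\{j_1,\dots,j_{q-1}\}$ of $G$ with $|\det M'_J|\geq\eps^{q-1}$.

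The columns $c_{j_1},\dots,c_{j_{q-1}}\in\FF^{q-1}$ of $M'$ then form a basis, so every column $c_k$ of $M'$ has a unique representation $c_k=\sum_\ell\alpha_\ell^{(k)}c_{j_\ell}$. Define $w\in\FF^n$ by $w[k]=v_1[k]-\sum_\ell v_1[j_\ell]\alpha_\ell^{(k)}$. A routine Laplace expansion (combined with column operations clearing the columns indexed by $J$) gives, for every $k\in[n]\setminus J$,
\[|\det M_{J\cup\{k\}}|=|\det M'_J|\cdot|w[k]|\geq\eps^{q-1}\cdot|w[k]|.\]
Note that $w[j_\ell]=0$ for all $\ell$, since $\alpha_{\ell'}^{(j_\ell)}=\delta_{\ell\ell'}$. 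Writing $N[J]\su [n]$ for the closed neighborhood of $J$ in $G$, the proof therefore reduces to producing some $k\in[n]\setminus N[J]$ with $|w[k]|\geq\eps$; the set $I=J\cup\{k\}$ is then an independent set of $G$ with $|\det M_I|\geq\eps^q$.

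To find such a $k$, suppose for contradiction that $|w[k]|<\eps$ for every $k\in[n]\setminus N[J]$, and set $B=N[J]\setminus J$, so $|B|\leq(q-1)\delta n$ by the max-degree bound on $G$. Define perturbations $v_i'\in\FF^n$ by zeroing out the entries indexed by $B$ in every row, and additionally replacing $v_1[k]$ by $v_1[k]-w[k]$ for $k\in[n]\setminus N[J]$ (leaving $v_1[k]$ unchanged on $J$). Setting $\tilde\alpha_\ell\in\FF^n$ by $\tilde\alpha_\ell[k]=\alpha_\ell^{(k)}$ for $k\notin B$ and $\tilde\alpha_\ell[k]=0$ for $k\in B$, a coordinate-by-coordinate check gives $v_i'=\sum_\ell v_i[j_\ell]\tilde\alpha_\ell$ for every $i\in\{1,\dots,q\}$, so all of $v_1',\dots,v_q'$ lie in the $(q-1)$-dimensional span of $\tilde\alpha_1,\dots,\tilde\alpha_{q-1}$ and are therefore linearly dependent. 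The total perturbation is at most $q|B|+\sum_{k\in[n]\setminus N[J]}|w[k]|\leq q(q-1)\delta n+\eps n$, contradicting the $(\eps+q(q-1)\delta)$-independence of $v_1,\dots,v_q$.

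The main obstacle is designing the perturbation in the last step so that it achieves two things simultaneously: (i) the resulting vectors must lie in a common $(q-1)$-dimensional subspace, and (ii) the total $L_1$-cost must fit within the independence threshold. The key device is to zero out the graph-boundary columns $B$ uniformly across all rows; this preserves the relations $c_k=\sum_\ell\alpha_\ell^{(k)}c_{j_\ell}$ among the surviving columns (giving (i)), consumes exactly the graph-theoretic budget $q(q-1)\delta n$, and leaves $\eps n$ of slack to absorb the residual smallness of $w$ on the interior $[n]\setminus N[J]$ via a single perturbation of the row $v_1$.
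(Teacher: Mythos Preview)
Your proof is correct and follows essentially the same strategy as the paper's: induct on $q$, use the inductive hypothesis to obtain a $(q-1)$-element independent set with large minor, and then argue by contradiction via a perturbation that zeroes out the neighborhood columns and adjusts the remaining entries of the ``extra'' row to force rank at most $q-1$. Your quantity $w[k]$ is exactly the amount by which the paper changes the single entry $a_{qj}$ to make the corresponding $q\times q$ determinant vanish, and your span argument with the vectors $\tilde\alpha_\ell$ is a more explicit version of the paper's observation that after the perturbation the first $q-1$ columns span the column space.
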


Note that \cref{lem-large-det-minor} follows from \cref{lem-large-det-minor-graph-version} by taking $\delta=0$ and the graph $G$ with no edges.

\begin{proof}[Proof of \cref{lem-large-det-minor-graph-version}]
Let $\varepsilon'=\varepsilon+q(q-1)\delta$. We prove the lemma by induction on $q$. Note that the base case $q=0$ is trivial (since we adopted the convention that the determinant of the $0\times 0$ empty matrix is 1). So assume that $q\geq 1$ and that we have already proved the lemma for $q-1$. Let $M=(a_{ij})_{i,j}$ be the $q\times n$ matrix with rows $v_1,\dots,v_q$. As $v_1,\dots,v_q$ are $\eps'$-independent, the vectors $v_1,\dots,v_{q-1}$ are $\eps'$-independent as well and in particular $(\varepsilon+(q-1)(q-2)\delta)$-independent. Let $M'$ be the matrix obtained from $M$ by removing the last row. Then, by the induction hypothesis, there is a $(q-1)$-vertex independent set $I'\subseteq [n]$ of $G$ such that $|\det M_{I'}'|\ge \eps^{q-1}$. Let us assume without loss of generality that $I'=[q-1]$.

Now, let $J\su \lbrace q,\dots,n\rbrace$ be the set of those vertices which have a neighbour in $I'$ in the graph $G$. By the degree assumption, we have $|J|\le (q-1)\delta n$. Then, consider all the $q\times q$ submatrices of $M$ formed by the first $q-1$ columns together with a column which has index in $\{q,\dots,n\}\setminus J$. Assume for contradiction that for all of these submatrices the absolute value of their determinant is smaller than $\eps^q$.

We want to modify $M$ to create a matrix $M'$ contradicting the definition of $\eps'$-independence. First, for $j\in \{q,\dots,n\}\setminus J$, let us change the entry $a_{qj}$ in such a way that the determinant of the $q\times q$ submatrix of $M$ formed by the first $q-1$ columns together with the $j$-th column becomes zero. By the assumption in the last paragraph, we need to change $a_{qj}$ by at most $\eps^q/|\det M_{I'}'|\leq \eps^q/\eps^{q-1}=\eps$ in order to achieve this. Second, for every $j\in J$ change all the entries in column $j$ to zero. The resulting matrix $M'$ then satisfies $\Vert M-M'\Vert_1\le \eps (n-(q-1))+q(q-1)\delta n\leq \eps' n$. But by construction, the first $q-1$ columns of $M'$ span its entire column space, so $M'$ has rank less than $q$ and its rows are not linearly independent. This is in contradiction to the rows of $M$ being $\eps'$-independent.
\end{proof}

Next, the following lemma states that for robustly independent vectors $v_1,\dots,v_q\in \FF^n$ and a vector $v\in \FF^n$, either $v_1,\dots, v_q$ and $v$ are robustly independent together, or $v$ is close to a linear combination of $v_1,\dots,v_q$.

\begin{lem}\label{lem-indep-or-close-lin-comb}
Fix an integer $q\geq 0$. Let $0<\eps\leq 1$ and let $v_1,\dots,v_q\in \FF^n$ be $\eps$-independent vectors such that $\Vert v_i\Vert_\infty\le 1$ for each $i$. Furthermore let $0\leq\delta<\eps/2$ and let $v\in \FF^n$ be a vector with $\Vert v\Vert_\infty\le 1$. Then at least one of the following two conditions holds:
\begin{compactitem}
\item[(a)] $v_1,\dots, v_q,v$ are $\delta$-independent, or
\item[(b)] there is a vector $v^*\in \spn(v_1,\dots,v_q)$ with $\Vert v-v^*\Vert_1\leq O(\eps^{-q}\delta)\cdot n$.
\end{compactitem}
\end{lem}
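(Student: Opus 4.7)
The plan is to prove the contrapositive: assume (a) fails, and produce a $v^*$ witnessing (b). So suppose $v_1,\dots,v_q,v$ are $\delta$-dependent. Applying \cref{lem-eps-dependent-change-bounded} to these $q+1$ vectors (which all have $L_\infty$-norm at most $1$), we obtain linearly dependent vectors $v_1',\dots,v_q',v' \in \FF^n$ all of whose entries have absolute value at most $q+2$ and satisfying $\sum_{i=1}^q \Vert v_i - v_i'\Vert_1 + \Vert v - v'\Vert_1 \le \delta n$.

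The first key observation is that $v_1',\dots,v_q'$ must themselves be linearly independent: otherwise, since $\sum_{i=1}^q \Vert v_i - v_i'\Vert_1 \le \delta n < \eps n$, they would witness $\eps$-dependence of $v_1,\dots,v_q$, contradicting the hypothesis. In fact, the same argument shows that $v_1',\dots,v_q'$ are $(\eps - \delta)$-independent, hence $(\eps/2)$-independent (using $\delta < \eps/2$). Since $v'$ together with $v_1',\dots,v_q'$ is linearly dependent, we may therefore write $v' = c_1 v_1' + \dots + c_q v_q'$, and the candidate for $v^*$ will be $c_1 v_1 + \dots + c_q v_q$, which lies in $\spn(v_1,\dots,v_q)$ by construction.

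The main step is to bound the coefficients $|c_j|$. The approach is to apply \cref{lem-large-det-minor} to the $(\eps/2)$-independent vectors $v_1',\dots,v_q'$ to find a $q \times q$ submatrix $A_I$ of the matrix $A$ with rows $v_1',\dots,v_q'$ satisfying $|\det A_I| \ge (\eps/2)^q$. Restricting the linear system $A^T c = v'$ to coordinates in $I$ gives $A_I^T c = v_I'$, which determines $c$ uniquely. Since $A_I^T$ has entries of absolute value at most $q+2$ and determinant of absolute value at least $(\eps/2)^q$, and $v_I'$ has $L_1$-norm at most $q(q+2)$, rescaling $A_I^T$ by a factor $1/(q+2)$ to reduce to the $\Vert\cdot\Vert_\infty \le 1$ setting of \cref{lem-Be-large} yields $|c_j| = O(\eps^{-q})$, where the implicit constant may depend on $q$.

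Finally, setting $v^* = \sum_j c_j v_j$ and applying the triangle inequality gives
\[
\Vert v - v^*\Vert_1 \le \Vert v - v'\Vert_1 + \sum_{j=1}^q |c_j|\cdot \Vert v_j' - v_j\Vert_1 \le \delta n + O(\eps^{-q})\cdot \delta n = O(\eps^{-q}\delta)\, n,
\]
which is conclusion (b). The main obstacle I anticipate is the coefficient bound: one has to verify that \cref{lem-large-det-minor} applies in the regime where entries are only bounded by $q+2$ rather than $1$ (which is fine, since the $\delta = 0$ specialisation of \cref{lem-large-det-minor-graph-version} does not actually use boundedness of entries), and then transfer the resulting determinant lower bound to a pointwise bound on $|c_j|$ via rescaling and \cref{lem-Be-large}; the rescaling loses a $q$-dependent factor which is harmless.
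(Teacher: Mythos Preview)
Your proof is correct and follows essentially the same approach as the paper: assume (a) fails, apply \cref{lem-eps-dependent-change-bounded}, observe that $v_1',\dots,v_q'$ are $(\eps/2)$-independent, write $v'$ in their span, bound the coefficients using a well-conditioned $q\times q$ submatrix from \cref{lem-large-det-minor}, and transfer to $v^*=\sum_j c_j v_j$. The only cosmetic difference is that the paper bounds the $|c_j|$ directly via the adjugate formula for $A_I^{-1}$, while you route through \cref{lem-Be-large} after rescaling; since \cref{lem-Be-large} is itself proved via the adjugate, this is the same argument in disguise. Your observation that \cref{lem-large-det-minor} (the $\delta=0$ case of \cref{lem-large-det-minor-graph-version}) does not actually need the entry bound is correct and handles the one subtlety you flagged.
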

Here the implicit constant in the $O$-term may depend on $q$.
\begin{proof}
Suppose (a) is not satisfied, so $v_1,\dots, v_q,v$ are $\delta$-dependent. Thus, by \cref{lem-eps-dependent-change-bounded} there are linearly dependent vectors $v_1',\dots,v_q',v'\in \FF^n$ with
\begin{equation}\Vert v_1-v_1'\Vert_1+\dots +\Vert v_q-v_q'\Vert_1+\Vert v-v'\Vert_1\leq \delta n,\label{eq:v'}\end{equation}
such that all entries of $v_1',\dots,v_q', v'$ have absolute value at most $q+1$. Now, since $v_1,\dots,v_q\in \FF^n$ are $\eps$-independent, and $\delta<\eps/2$, the vectors $v_1',\dots,v_q'$ are $(\eps/2)$-independent, and in particular linearly independent. Thus, as $v_1',\dots,v_q', v'$ are linearly dependent, we can write $v'$ as $v'=a_1v_1'+\dots+a_qv_q'$ for some $a_1,\dots,a_q\in \FF$.

We claim that $\vert a_i\vert=O(\eps^{-q})$ for all $i$. Indeed, since $v_1',\dots,v_q'$ are $(\eps/2)$-independent, by \cref{lem-large-det-minor} the matrix with rows $v_1',\dots,v_q'$ has a $q\times q$ submatrix $A$ whose determinant has absolute value $\Omega(\eps^q)$. Let $I\su [n]$ be the set of the indices of the columns contained in this submatrix. Consider the row vector $(v')_I\in \FF^q$ obtained from $v'$ by taking the coordinates indexed by $I$. Now, since $v'=a_1v_1'+\dots+a_qv_q'$ we have $(a_1,\dots,a_q)A=(v')_I$, so $(a_1,\dots,a_q)=(v')_IA^{-1}$. Since $\Vert A\Vert_\infty\le q+1=O(1)$, and $\vert\det A\vert =\Omega(\eps^q)$, we can see (from the formula for $A^{-1}$ in terms of the adjugate of $A$) that all entries of the matrix $A^{-1}$ are of the form $O(\eps^{-q})$. Since $\Vert (v')_I\Vert_\infty\le q+1=O(1)$ as well, we can conclude that the absolute values of $a_1,\dots,a_q$ are of the form $O(\eps^{-q})$, as claimed.

Now, define the linear combination $v^*=a_1v_1+\dots+a_qv_q$. We have
\[
\Vert v-v^*\Vert_1\leq\Vert v-v'\Vert_1+\Vert v'-v^*\Vert_1=\Vert v-v'\Vert_1+\left\Vert a_1(v_1'-v_1)+\dots+a_q(v_q'-v_q)\right\Vert_1.\]
Recalling \cref{eq:v'} and that $\vert a_i\vert=O(\eps^{-q})$ for all $i$, we deduce that $\Vert v-v^*\Vert_1\le O(\eps^{-q})\cdot\delta n,
$
so (b) holds.
\end{proof}

We then deduce the following lemma. It is a ``robust version'' of the fact that if a list of vectors does not contain $r$ linearly independent vectors, then among the vectors on this list we can find a linearly independent set of size less than $r$ whose span contains the entire list.

\begin{lem}\label{lem-choose-from-list}
Fix a positive integer $r$. Consider some $0< \eps\leq 1/2$, and consider a list of vectors $v_1,\dots,v_k\in \FF^n$ such that $\Vert v_i\Vert_\infty\le 1$ for all $i$. Suppose that there is no subset of $r$ vectors from this list which are $\eps^{(6r)^r}$-independent. Then for some $0\leq q\leq r-1$, we can choose vectors $w_1,\dots,w_{q}$ among the list $v_1,\dots,v_k$ such that both of the following conditions are satisfied:
\begin{compactitem}
\item[(i)] $w_1,\dots, w_q$ are $\eps^{(6r)^q}$-independent, and
\item[(ii)]for each $i=1,\dots,k$, there is a vector $v_i'\in \spn(w_1,\dots,w_q)$ with $\Vert v_i-v_i'\Vert_1\leq O(\eps^{5r\cdot (6r)^q})\cdot n$.
\end{compactitem}
\end{lem}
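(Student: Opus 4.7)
The plan is to construct $w_1,\dots,w_q$ greedily and to extract condition (ii) from the failure to extend via Lemma \ref{lem-indep-or-close-lin-comb}. Start at stage $q=0$ with the empty collection, which is trivially $\eps$-independent. Given, at stage $q$, a choice $w_1,\dots,w_q$ of vectors from the list that are $\eps^{(6r)^q}$-independent, search for some $v_i$ in the list such that $w_1,\dots,w_q,v_i$ are $\eps^{(6r)^{q+1}}$-independent. If such $v_i$ exists, set $w_{q+1}:=v_i$ and advance to stage $q+1$; otherwise, terminate.

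This process cannot reach $q=r$: if it did, then $w_1,\dots,w_r$ would be $\eps^{(6r)^r}$-independent, contradicting the hypothesis. Hence it terminates at some $q\le r-1$ with (i) automatic.

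To verify (ii), fix the stage $q\le r-1$ at which the process halted. For every $v_i$ in the list, the vectors $w_1,\dots,w_q,v_i$ are $\eps^{(6r)^{q+1}}$-dependent (otherwise we would have continued). Apply Lemma \ref{lem-indep-or-close-lin-comb} with its ``$\eps$'' set to $\eps^{(6r)^q}$ and its ``$\delta$'' set to $\eps^{(6r)^{q+1}}$; note that the hypothesis $\delta<\eps^{(6r)^q}/2$ is immediate from $\eps\le 1/2$ and the exponential gap, and that $\Vert w_j\Vert_\infty,\Vert v_i\Vert_\infty\le 1$ since both are taken from the original list. Since option (a) of that lemma fails, option (b) supplies a vector $v_i^*\in\spn(w_1,\dots,w_q)$ with
\[
\Vert v_i-v_i^*\Vert_1\le O\bigl(\eps^{-q(6r)^q}\cdot\eps^{(6r)^{q+1}}\bigr)\cdot n=O\bigl(\eps^{(6r)^q(6r-q)}\bigr)\cdot n.
\]
Because $q\le r-1$ gives $6r-q\ge 5r+1\ge 5r$, and $\eps\le 1$, we have $\eps^{(6r)^q(6r-q)}\le \eps^{5r\cdot(6r)^q}$, which is precisely the bound required in (ii). The implicit constant depends on $q$ through Lemma \ref{lem-indep-or-close-lin-comb}, but since $q<r$ and $r$ is fixed, this is absorbed into the $O$-notation.

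The only point that needs care is the exponent bookkeeping: the approximation loss from Lemma \ref{lem-indep-or-close-lin-comb} at stage $q$ scales like $\eps^{-q(6r)^q}$, and one must check that this is more than compensated by the exponential gap between $(6r)^q$ and $(6r)^{q+1}$ in the independence parameter. The choice of the constant $6r$ in $(6r)^q$ is precisely what makes this go through, leaving the margin of $5r$ required in (ii). All other steps are a routine greedy argument combined with the already-established Lemma \ref{lem-indep-or-close-lin-comb}.
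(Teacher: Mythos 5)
Your proof is correct and follows essentially the same approach as the paper's: you build $w_1,\dots,w_q$ greedily, whereas the paper takes $q$ to be maximal all at once, but the termination condition and the application of Lemma~\ref{lem-indep-or-close-lin-comb} with the same parameter bookkeeping are identical. The exponent arithmetic checks out, and your verification that $\delta < \eps/2$ in the hypothesis of Lemma~\ref{lem-indep-or-close-lin-comb} (via $\eps \le 1/2$ and the gap of at least $5$ in the exponents) is also correct.
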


Here the implicit constant in the $O$-term may depend on $r$.

\begin{proof}
Let us choose a collection of $\eps^{(6r)^q}$-independent vectors $w_1,\dots,w_{q}$ among the list $v_1,\dots,v_k$, with $q\in \{0,\dots,r\}$ as large as possible (this is well-defined, since $q=0$ is definitely possible). By our assumption on $v_1,\dots,v_k$ we must have $q\leq r-1$.

We need to check condition (ii). Note that for all $i$ for which $v_i$ is one of the vectors $w_1,\dots,w_q$, condition (ii) holds trivially with $v_i'=v_i$. For all other $1\leq i\leq k$, the $q+1$ vectors $w_1,\dots,w_{q}, v_i$ cannot be $\eps^{(6r)^{q+1}}$-independent by maximality of $q$, so by \cref{lem-indep-or-close-lin-comb} there is $v_i'\in \spn(w_1,\dots,w_q)$ with
\[\Vert v_i-v_i'\Vert_1\leq O\left(\left(\eps^{(6r)^q}\right)^{-q}\cdot \eps^{(6r)^{q+1}}\right)\cdot n=O\left(\eps^{5r\cdot (6r)^q}\right)\cdot n.\tag*{\qedhere}\]
\end{proof}

Finally, the following lemma is not strictly about $\eps$-independence but will be used several times in the proofs of \cref{lem-matrix-rank-close,lem-matrix-rank-close-set}. For given vectors $w_1,\dots,w_q$, the lemma is about finding a vector $v\in \spn(w_1,\dots,w_q)$ with certain prescribed coordinates, and controlling the distance of $v$ to another given vector $\tilde{v}$ in this span.

\begin{lem}\label{claim-linear-comb-prescribed-entries}
Fix a non-negative integer $q$. Let $w_1,\dots,w_q\in \FF^n$ be vectors satisfying $\Vert w_i\Vert_\infty\le 1$ for each $i$, and let $M$ be the $q\times n$ matrix whose rows are the vectors $w_1,\dots,w_q$. Consider some subset $I\su [n]$ of size $q$, and suppose that the $q\times q$ matrix $M_I$ satisfies $\det M_I\ne 0$.

Now, for each $i\in I$ let us specify a value $v^{(i)}\in \FF$. Then there is a unique vector $v\in \spn(w_1,\dots,w_q)$ having the prescribed values $v^{(i)}$ in the coordinates $i\in I$. Furthermore, for any vector $\tilde{v}\in \spn(w_1,\dots,w_q)$ we have
\[\Vert v-\tilde{v}\Vert_1\leq  O(|\det M_I|^{-1})\cdot n\cdot \Vert\tilde v_I-v_I\Vert_1.\]
\end{lem}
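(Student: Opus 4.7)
The plan is to reduce everything to the standard linear algebra identity $v = \lambda^T M$ for vectors in the row span, and then control the coefficients $\lambda$ via the adjugate formula for $M_I^{-1}$.

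First I would handle existence and uniqueness. Any vector $v \in \spn(w_1,\dots,w_q)$ is of the form $v = \lambda^T M$ for some row vector $\lambda \in \FF^q$, so the restriction $v_I = \lambda^T M_I$ determines $\lambda$ uniquely when $M_I$ is invertible, and conversely any prescribed $v_I$ is achievable by setting $\lambda^T = v_I^T M_I^{-1}$ and then $v = \lambda^T M$. This immediately gives the first assertion.

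Next I would prove the quantitative bound. Set $u = v - \tilde v \in \spn(w_1,\dots,w_q)$ and write $u = \mu^T M$, so that $u_I = \mu^T M_I$ and therefore $\mu^T = u_I^T M_I^{-1}$. By Cramer's rule, every entry of $M_I^{-1}$ is a $(q-1)\times(q-1)$ cofactor of $M_I$ divided by $\det M_I$; since each entry of $M_I$ has absolute value at most $1$, each such cofactor has absolute value at most $(q-1)!$, giving $\Vert M_I^{-1}\Vert_\infty \le (q-1)!/|\det M_I|$. Consequently every entry of $\mu$ satisfies $|\mu_j| \le (q-1)! \cdot |\det M_I|^{-1} \cdot \Vert u_I \Vert_1$, and so $\Vert \mu \Vert_1 = O(|\det M_I|^{-1}) \cdot \Vert u_I \Vert_1$ with an implicit constant depending only on $q$.

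Finally, since $u = \mu^T M$ and $\Vert M \Vert_\infty \le 1$, each coordinate of $u$ has absolute value at most $\Vert \mu \Vert_1$, and summing over $n$ coordinates gives $\Vert u \Vert_1 \le n \cdot \Vert \mu \Vert_1 \le O(|\det M_I|^{-1}) \cdot n \cdot \Vert u_I \Vert_1$. Noting $\Vert u_I \Vert_1 = \Vert v_I - \tilde v_I \Vert_1 = \Vert \tilde v_I - v_I \Vert_1$ concludes the proof. There is no substantive obstacle here; the entire statement is a quantitative consequence of the boundedness $\Vert M \Vert_\infty \le 1$ combined with the adjugate bound on $M_I^{-1}$.
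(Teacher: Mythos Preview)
Your proof is correct and follows essentially the same approach as the paper: write $v-\tilde v$ as a linear combination of $w_1,\dots,w_q$, recover the coefficients via $M_I^{-1}$, bound the entries of $M_I^{-1}$ using the adjugate formula and $\Vert M_I\Vert_\infty\le 1$, and then use $\Vert w_i\Vert_\infty\le 1$ to bound $\Vert v-\tilde v\Vert_1$. The only differences are cosmetic (your matrix notation $\lambda^T M$ versus the paper's explicit linear combination $a_1w_1+\dots+a_qw_q$, and a slightly more detailed treatment of existence/uniqueness on your part).
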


Here the implicit constant in the $O$-term may depend on $q$.

\begin{proof}
The existence and uniqueness of $v$ follow directly from the fact that $\det M_I\ne 0$. Then, consider some vector $\tilde{v}\in \spn(w_1,\dots,w_q)$, and write $v-\tilde{v}=a_1w_1+\dots+a_qw_q$ for some $a_1,\dots,a_q\in \FF$.

Note that we can determine the coefficients $a_1,\dots, a_q$ by the equation $(a_1,\dots,a_q)M_I=(v-\tilde{v})_I$, where we interpret $(v-\tilde{v})_I\in \FF^I$ as a row vector. In other words, we have $(a_1,\dots,a_q)=(v-\tilde{v})_IM_I^{-1}$. The entries of $M_I^{-1}$ have absolute value $O(|\det M_I|^{-1})$ (by the formula for the inverse of a matrix in terms of its determinant and its adjugate). Thus, the absolute values of $a_1,\dots,a_q$ are of the form $O(|\det M_I|^{-1})\cdot \Vert\tilde v_I-v_I\Vert_1$. Since $\Vert w_i\Vert_\infty\le 1$ for each $i$, we conclude 
\[\Vert v-\tilde{v}\Vert_1=\Vert a_1w_1+\dots+a_qw_q\Vert_1\leq (\vert a_1\vert+\dots+\vert a_q\vert)\cdot n\leq  O(|\det M_I|^{-1})\cdot n\cdot \Vert\tilde v_I-v_I\Vert_1.\tag*{\qedhere}\]
\end{proof}

\section{Proving closeness to a low-rank symmetric matrix}\label{sec:lem-matrix-rank-close}

\subsection{Proof of \texorpdfstring{\cref{lem-matrix-rank-close}}{Lemma~\ref{lem-matrix-rank-close}}}

In this section, we finally prove \cref{lem-matrix-rank-close}: given a symmetric matrix $A$ which does not have $\delta n$ disjoint $r$-tuples of $\alpha$-independent rows, we show that there is a symmetric matrix $H$ that is close to $A$ (in terms of the entrywise $L_1$-norm) and has rank less than $r$. We outline the approach with a sequence of claims, whose proofs we will provide afterwards. We assume that $\alpha$ is sufficiently small, and for all asymptotic notation we treat $r$ as a constant.

First, we find a symmetric matrix $A^*\in \FF^{n\times n}$ which is close to $A$ and does not have any $r$-tuple of $\alpha$-independent rows, as in the following claim.

\begin{claim}\label{claim-matrix-A-star}
Consider the setting of \cref{lem-matrix-rank-close}. Then we can find a symmetric matrix $A^*\in \FF^{n\times n}$ with $\Vert A^*\Vert_\infty\le 1$, such that $\Vert A^*-A\Vert_1\leq O(\delta)\cdot n^2$ and such that the matrix $A^*$ does not have any $r$-tuple of $\alpha$-independent rows.
\end{claim}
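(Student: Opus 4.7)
The plan is to produce $A^*$ by taking a maximal collection of disjoint $\alpha$-independent $r$-tuples and then zeroing out both the rows and the columns indexed by the rows appearing in those tuples. This keeps symmetry for free, keeps the $L_\infty$ bound, and costs only $O(\delta) n^2$ in $L_1$ because the hypothesis caps the number of modified indices.

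More precisely, let $\mathcal{T}$ be a maximal collection of pairwise disjoint $r$-tuples $(i_1,\dots,i_r)\in[n]^r$ whose corresponding rows of $A$ are $\alpha$-independent. By hypothesis $|\mathcal{T}|<\delta n$, so the set $S\subseteq [n]$ of all indices appearing in some tuple of $\mathcal{T}$ satisfies $|S|<r\delta n$. Define $A^*$ by setting every entry of $A$ in a row or column indexed by $S$ to zero, and keeping all other entries. Then $A^*$ is symmetric (since $A$ is), $\Vert A^*\Vert_\infty\le 1$, and $\Vert A^*-A\Vert_1\le 2|S|n\le 2r\delta n^2=O(\delta)n^2$.

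It remains to verify that no $r$-tuple of rows of $A^*$ is $\alpha$-independent. Consider an $r$-tuple of rows of $A^*$. If the tuple contains a row indexed by some $i\in S$, that row of $A^*$ is identically zero, so the tuple is linearly dependent (take the zero vector in place of it and the other rows unchanged), and in particular $\alpha$-dependent. Otherwise, the $r$ indices lie in $[n]\setminus S$. By maximality of $\mathcal{T}$, the corresponding rows $v_1,\dots,v_r$ of the original matrix $A$ are $\alpha$-dependent (if they were $\alpha$-independent, $\mathcal{T}$ could be enlarged). Pick linearly dependent $v_1',\dots,v_r'\in\FF^n$ with $\sum_i\Vert v_i-v_i'\Vert_1\le\alpha n$, and let $\pi\colon\FF^n\to\FF^n$ be the coordinate projection that zeroes entries indexed by $S$. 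Then $\pi(v_1'),\dots,\pi(v_r')$ are still linearly dependent (a linear dependence among the $v_i'$ transports through the linear map $\pi$), the corresponding rows of $A^*$ are precisely $\pi(v_1),\dots,\pi(v_r)$, and
\[
\sum_{i=1}^{r}\Vert \pi(v_i)-\pi(v_i')\Vert_1\le\sum_{i=1}^{r}\Vert v_i-v_i'\Vert_1\le\alpha n,
\]
so the rows of $A^*$ in this tuple are $\alpha$-dependent as well. This completes the verification.

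The only real subtlety is in the second case: we must be sure that zeroing out the columns indexed by $S$ does not turn a dependent tuple into an independent one. This is handled by transporting the approximating dependent vectors through the same coordinate projection $\pi$, which cannot increase $L_1$-distance and preserves linear dependence, so no genuinely new obstruction arises.
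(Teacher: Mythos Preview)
Your proof is correct and follows essentially the same approach as the paper's: take a maximal collection of disjoint $\alpha$-independent $r$-tuples, zero out the corresponding rows and columns, and verify the three required properties. Your argument is in fact slightly more explicit than the paper's, which merely asserts that ``it is not hard to see'' that $\alpha$-dependence is preserved under zeroing columns, whereas you spell this out via the coordinate projection $\pi$.
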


Second, we use \cref{lem-choose-from-list} to identify a subset of rows $w_1,\dots,w_q$ of $A^*$, where $0\leq q\leq r-1$, such that every row of $A^*$ can be approximated by a linear combination of $w_1,\dots,w_q$. We then form a matrix $B^*$ by replacing the rows of $A^*$ by these approximations. This will give the following.

\begin{claim}\label{claim:close-to-special-low-rank}
Let the matrix $A^*$ be as in \cref{claim-matrix-A-star}. For some $0\leq q\leq r-1$ and $0<\tilde{\alpha}\le \alpha^{(6r)^{-r}}$ we can find $\tilde\alpha$-independent rows $w_1,\dots,w_q$ of $A^*$, and a matrix $B^*\in \FF^{n\times n}$, such that each row of $B^*$ lies in $\spn(w_1,\dots,w_q)$ and such that $\Vert \row_i(A^*)-\row_i(B^*)\Vert_1\le O(\tilde \alpha^{4r})\cdot n$ for all $i\in [n]$.
\end{claim}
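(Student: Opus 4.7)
The plan is to apply \cref{lem-choose-from-list} directly to the rows of $A^*$, choosing its parameter $\eps$ so that the exponent $(6r)^r$ appearing in the hypothesis matches the $\alpha$ provided by \cref{claim-matrix-A-star}. Concretely, I would set $\eps = \alpha^{(6r)^{-r}}$, so that $\eps^{(6r)^r} = \alpha$. We may assume $\alpha$ is small enough that $\eps \le 1/2$, since otherwise $\alpha$ is bounded below by an absolute constant and the overall $O(\delta+\alpha^{(6r)^{-r}})$ error term in \cref{lem-matrix-rank-close} is $\Omega(1)$, making the claim vacuous. By \cref{claim-matrix-A-star}, the rows of $A^*$ have $\Vert \cdot \Vert_\infty \le 1$ and no $r$-tuple of them is $\alpha$-independent, i.e.\ no $r$ of them are $\eps^{(6r)^r}$-independent, which is exactly the hypothesis of \cref{lem-choose-from-list}.

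Applying the lemma yields some $0 \le q \le r-1$ and rows $w_1, \dots, w_q$ of $A^*$ that are $\eps^{(6r)^q}$-independent and with the property that every row $v_i = \row_i(A^*)$ admits an approximation $v_i' \in \spn(w_1, \dots, w_q)$ with $\Vert v_i - v_i' \Vert_1 \le O(\eps^{5r \cdot (6r)^q}) \cdot n$. Now set $\tilde\alpha = \eps^{(6r)^q} = \alpha^{(6r)^{q-r}}$, so that $w_1, \dots, w_q$ are $\tilde\alpha$-independent by construction. Since $q \ge 0$, we have $(6r)^{q-r} \ge (6r)^{-r}$, and since $\alpha \le 1$ this gives $\tilde\alpha = \alpha^{(6r)^{q-r}} \le \alpha^{(6r)^{-r}}$, verifying the promised bound on $\tilde\alpha$.

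Finally, define $B^* \in \FF^{n\times n}$ by $\row_i(B^*) = v_i'$ for each $i$. By construction every row of $B^*$ lies in $\spn(w_1, \dots, w_q)$, and the per-row error satisfies
\[
\Vert \row_i(A^*) - \row_i(B^*) \Vert_1 \le O(\eps^{5r \cdot (6r)^q}) \cdot n = O(\tilde\alpha^{5r}) \cdot n \le O(\tilde\alpha^{4r}) \cdot n,
\]
using $\tilde\alpha \le 1$ in the last step. This gives everything asserted by the claim.

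I do not foresee any real obstacle: once \cref{lem-choose-from-list} is in hand, the proof is essentially bookkeeping, matching the tower-of-exponents in its statement against $\alpha$ via the substitution $\eps = \alpha^{(6r)^{-r}}$. The only mild subtlety is the verification $\tilde\alpha \le \alpha^{(6r)^{-r}}$, which reduces to the observation $q \ge 0$; and the slight slack $\tilde\alpha^{5r} \le \tilde\alpha^{4r}$, which is free since $\tilde\alpha \le 1$.
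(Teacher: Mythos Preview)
Your proof is correct and essentially identical to the paper's own argument: both apply \cref{lem-choose-from-list} with $\eps = \alpha^{(6r)^{-r}}$, set $\tilde\alpha = \eps^{(6r)^q} = \alpha^{(6r)^{q-r}}$, and define $B^*$ to have the approximating vectors $v_i'$ as rows, noting the same wasteful inequality $O(\tilde\alpha^{5r}) \le O(\tilde\alpha^{4r})$. The paper handles the requirement $\eps \le 1/2$ by a blanket assumption at the start of the section that $\alpha$ is sufficiently small, matching your justification.
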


\cref{claim:close-to-special-low-rank} ensures that each row of $A^*$ is close to the corresponding row of $B^*$. However, we have no control over the columns of $A^*$ and $B^*$. In the next step, we ``zero out'' some rows and columns of $A^*$ and $B^*$ to obtain matrices $A'$ and $B'$ such that every row and column of $A'$ is close to the corresponding row or column of $B'$.

\begin{claim}\label{claim:zero-some-columns}
Consider $A^*$, $B^*$, $\tilde\alpha$ and $w_1,\dots,w_q$ as in \cref{claim:close-to-special-low-rank}. We can find a symmetric matrix $A'\in \FF^{n\times n}$, a matrix $B'\in \FF^{n\times n}$, and $(\tilde \alpha/2)$-independent vectors $w_1',\dots,w_q'\in \FF^n$ such that each row of $B'$ lies in $\spn(w_1',\dots,w_q')$, such that each $\Vert w_i'\Vert_\infty\le 1$, and such that we have
\[\Vert A'-A^*\Vert_1\le O(\tilde \alpha^{r})\cdot n^2\]
and, for each $i\in [n]$, 
\[\Vert \row_i(A')-\row_i(B')\Vert_1\le O(\tilde{\alpha}^{3r})\cdot n \quad\text{ and }\quad\Vert \col_i(A')-\col_i(B')\Vert_1 \le O(\tilde{\alpha}^{3r})\cdot n.\]
\end{claim}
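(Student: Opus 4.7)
The plan is to form $A'$ and $B'$ by zeroing out a small set of "bad" rows and columns of $A^*$ and $B^*$, chosen symmetrically so that $A'$ remains symmetric. Because $A^*$ is symmetric but $B^*$ need not be, the column differences $\|\col_j(A^*)-\col_j(B^*)\|_1$ are not directly controlled, but their \emph{sum} is, and this is enough.

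First, let $\delta_j = \|\col_j(A^*)-\col_j(B^*)\|_1$. Summing the row bound from \cref{claim:close-to-special-low-rank} gives $\sum_j \delta_j = \|A^*-B^*\|_1 \le \sum_i \|\row_i(A^*)-\row_i(B^*)\|_1 \le O(\tilde\alpha^{4r})\cdot n^2$. Choose a constant $C$ (depending on $r$) and let $J = \{j \in [n] : \delta_j > C\tilde\alpha^{3r} n\}$. By Markov's inequality, $|J| \le O(\tilde\alpha^{r})\cdot n$.

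Next, define $A'$ by setting to $0$ every entry of $A^*$ in a row or column indexed by $J$; define $B'$ from $B^*$ in exactly the same way; and define $w_i' \in \FF^n$ by zeroing the coordinates of $w_i$ indexed by $J$. Then $A'$ is still symmetric, $\|A'-A^*\|_1 \le 2|J| n \le O(\tilde\alpha^r)\cdot n^2$, and $\|w_i'\|_\infty \le \|w_i\|_\infty \le 1$ (since the $w_i$ are rows of $A^*$). If $\row_i(B^*) = \sum_k c_{ik} w_k$, then $\row_i(B') = \sum_k c_{ik} w_k'$ when $i \notin J$, and $\row_i(B')=0$ when $i \in J$, so every row of $B'$ lies in $\spn(w_1',\dots,w_q')$.

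Then I would verify the row/column $L_1$-bounds case by case. For $i\in J$ we have $\row_i(A')=\row_i(B')=0$ (and similarly for $\col_j$ with $j \in J$). For $i \notin J$, the vector $\row_i(A')-\row_i(B')$ is obtained from $\row_i(A^*)-\row_i(B^*)$ by zeroing the $J$-coordinates, so its $L_1$-norm is at most $O(\tilde\alpha^{4r})\cdot n \le O(\tilde\alpha^{3r})\cdot n$. For $j\notin J$, $\col_j(A')-\col_j(B')$ is the restriction of $\col_j(A^*)-\col_j(B^*)$ to indices outside $J$, with $L_1$-norm at most $\delta_j \le C\tilde\alpha^{3r}\cdot n$ by the choice of $J$.

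The main point to check is that the perturbation does not destroy near-independence of the $w_i$: I would show that $w_1',\dots,w_q'$ are $(\tilde\alpha/2)$-independent by a direct triangle-inequality argument, since each $\|w_i - w_i'\|_1 \le |J| \le O(\tilde\alpha^r)\cdot n$, so any hypothetical $(\tilde\alpha/2)$-dependence of the $w_i'$ would produce a dependence of the $w_i$ with total $L_1$-perturbation at most $(\tilde\alpha/2)n + q\cdot O(\tilde\alpha^r)\cdot n$, which is below $\tilde\alpha n$ once $\tilde\alpha$ is sufficiently small (with respect to $r$), contradicting the $\tilde\alpha$-independence of the $w_i$ from \cref{claim:close-to-special-low-rank}. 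The case of $\tilde\alpha$ bounded below by a constant can be absorbed into the implicit constants. Apart from this bookkeeping, there is no real obstacle; the content of the claim is just that bad columns are rare enough to be symmetrically killed without spoiling closeness to $A^*$ or independence of the $w_i$.
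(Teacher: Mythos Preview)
Your proposal is correct and matches the paper's proof essentially line for line: define $J$ by thresholding the column $L_1$-differences at $\tilde\alpha^{3r}n$, use $\|A^*-B^*\|_1\le O(\tilde\alpha^{4r})n^2$ to bound $|J|\le O(\tilde\alpha^r)n$, zero out the rows and columns indexed by $J$ in $A^*$, $B^*$, and the $w_i$, and then verify each property directly. The paper handles the $(\tilde\alpha/2)$-independence step by noting that for $q\ge 1$ one has $r\ge 2$, so the total perturbation $q|J|\le r|J|=O(\tilde\alpha^r)n\le(\tilde\alpha/2)n$ once $\alpha$ (and hence $\tilde\alpha$) is sufficiently small, which is assumed throughout; your triangle-inequality phrasing is the same argument.
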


While \cref{claim:zero-some-columns} ensures that each row or column of $A'$ is close to the corresponding row or column of $B'$, it does not give control over individual entries of $A'$. However, the following claim (proved using \cref{lem-large-det-minor-graph-version}) states that we can find a subset $I\su [n]$ such that for all distinct $i,j\in I$, the $(i,j)$-entry of $A'$ is close to the $(i,j)$-entry of $B'$.

\begin{claim}\label{claim:graph-lemma-application}
Let $A'=(a'_{ij})_{i,j}$, $B'=(b'_{ij})_{i,j}$ and $w_1',\dots,w_q'$ be as in \cref{claim:zero-some-columns}, and let $M$ be the $q\times n$ matrix whose rows are $w_1',\dots,w_q'$. There is a subset $I\subseteq [n]$ of size $q$ such that for all distinct $i,j\in I$ we have $|a'_{ij}-b'_{ij}|<\tilde \alpha^{2r}$, and such that the $q\times q$ matrix $M_I$ satisfies $|\det M_I|\ge \Omega(\tilde \alpha^{r-1})$.
\end{claim}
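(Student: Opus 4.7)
The plan is to construct a graph $G$ on vertex set $[n]$ encoding the ``bad'' entries where $A'$ and $B'$ differ substantially, bound the maximum degree of $G$ using the row-wise and column-wise $L_1$-closeness from \cref{claim:zero-some-columns}, and then apply the graph-version determinant lemma (\cref{lem-large-det-minor-graph-version}) to find an independent set of size $q$ whose corresponding $q\times q$ submatrix of $M$ has large determinant.

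Concretely, I would declare $\{i,j\}$ to be an edge of $G$ precisely when $|a'_{ij}-b'_{ij}|\ge \tilde{\alpha}^{2r}$ or $|a'_{ji}-b'_{ji}|\ge \tilde{\alpha}^{2r}$ (we need both orderings since $A'$ is symmetric but $B'$ need not be). The row bound $\Vert \row_i(A')-\row_i(B')\Vert_1\le O(\tilde{\alpha}^{3r})\cdot n$ from \cref{claim:zero-some-columns}, together with Markov's inequality, implies that the number of $j$ with $|a'_{ij}-b'_{ij}|\ge \tilde{\alpha}^{2r}$ is at most $O(\tilde{\alpha}^{3r}) n/\tilde{\alpha}^{2r}=O(\tilde{\alpha}^{r})\cdot n$; the column bound gives the same estimate for the condition $|a'_{ji}-b'_{ji}|\ge\tilde{\alpha}^{2r}$. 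Hence every vertex of $G$ has degree at most $\delta\cdot n$ for some $\delta=O(\tilde{\alpha}^{r})$.

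Now I would apply \cref{lem-large-det-minor-graph-version} to the $q\times n$ matrix $M$ (whose rows $w_1',\dots,w_q'$ are $(\tilde{\alpha}/2)$-independent with entries bounded in absolute value by $1$), together with the graph $G$, and with the parameter $\varepsilon=\tilde{\alpha}/4$. Since $\tilde{\alpha}\le \alpha^{(6r)^{-r}}$ and $\alpha$ is assumed small (with respect to $r$), and since $q\le r-1$, we have
\[\varepsilon+q(q-1)\delta\le \frac{\tilde{\alpha}}{4}+O(\tilde{\alpha}^{r})\le \frac{\tilde{\alpha}}{2},\]
so the rows of $M$ are indeed $(\varepsilon+q(q-1)\delta)$-independent as required by the hypothesis of \cref{lem-large-det-minor-graph-version}. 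The conclusion of that lemma yields a $q$-vertex independent set $I\subseteq [n]$ of $G$ with $|\det M_I|\ge \varepsilon^q=(\tilde{\alpha}/4)^q\ge (\tilde{\alpha}/4)^{r-1}=\Omega(\tilde{\alpha}^{r-1})$, and by the definition of $G$, for all distinct $i,j\in I$ we have $|a'_{ij}-b'_{ij}|<\tilde{\alpha}^{2r}$, exactly as claimed.

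I don't anticipate a serious obstacle here: the proof is essentially bookkeeping, and the exponents $\tilde{\alpha}^{3r}$ (row/column closeness), $\tilde{\alpha}^{2r}$ (entry threshold), $\tilde{\alpha}^{r}$ (degree bound), and $\tilde{\alpha}^{r-1}$ (determinant lower bound) have been calibrated precisely so that Markov's inequality and \cref{lem-large-det-minor-graph-version} fit together. The one point that requires mild care is simultaneously handling both $|a'_{ij}-b'_{ij}|$ and $|a'_{ji}-b'_{ji}|$ (since $B'$ need not be symmetric), which is why the graph $G$ is defined with an ``or'' and why both the row and column closeness estimates from \cref{claim:zero-some-columns} are needed.
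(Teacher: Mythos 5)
Your proof is correct and matches the paper's argument essentially line for line: you build the same graph $G$ with the same $\tilde{\alpha}^{2r}$ threshold, bound its maximum degree by $O(\tilde{\alpha}^r)\cdot n$ via the row/column closeness and Markov, and then invoke \cref{lem-large-det-minor-graph-version} with $\varepsilon=\tilde{\alpha}/4$ after verifying $\varepsilon+q(q-1)\delta\le\tilde{\alpha}/2$. The only cosmetic difference is that the paper first dispatches the trivial case $q=0$ so it can freely use $r\ge 2$; your version handles this implicitly since $q(q-1)=0$ when $q=0$.
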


We can then use this subset $I\su [n]$ to construct our final matrix $H$, in the following claim.

\begin{claim}\label{claim:final-symmetric}
Let $A'=(a'_{ij})_{i,j}$, $B'=(b'_{ij})_{i,j}$ and $w_1',\dots,w_q'$ be as \cref{claim:zero-some-columns} and let $I\su [n]$ be as in \cref{claim:graph-lemma-application}. Define $h_{ii}=b'_{ii}$ for all $i\in I$ and $h_{ij}=a'_{ij}$ for all distinct $i,j\in I$. Then we can extend these values to a symmetric matrix $H=(h_{ij})\in \FF^{n\times n}$ such that every row of $H$ is in $\spn(w_1',\dots,w_q')$ and such that $\Vert H-B'\Vert\le  O(\tilde\alpha)\cdot n^2$.
\end{claim}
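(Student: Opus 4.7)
The plan is to realise $H$ as $H=W^{T}DW$ for an appropriate symmetric $q\times q$ matrix $D$, where $W\in\FF^{q\times n}$ is the matrix whose rows are $w_{1}',\dots,w_{q}'$. This presentation automatically makes $H$ symmetric, every row of $H$ lies in $\spn(w_{1}',\dots,w_{q}')$, and $H$ has rank at most $q<r$. To make $H$ agree with the prescribed entries, first I would observe that the $I\times I$ block $(h_{ij})_{i,j\in I}$ defined in the claim is itself symmetric, because $A'$ is symmetric. Since $M_{I}$ is invertible, I would then set $D=(M_{I}^{T})^{-1}(h_{ij})_{i,j\in I}\, M_{I}^{-1}$, which is a symmetric $q\times q$ matrix; the $I\times I$ block of $W^{T}DW$ is $M_{I}^{T}D M_{I}=(h_{ij})_{i,j\in I}$, as required.

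To bound $\|H-B'\|_{1}$ I would go row by row. Both $\row_{i}(H)$ and $\row_{i}(B')$ lie in $\spn(w_{1}',\dots,w_{q}')$, so \cref{claim-linear-comb-prescribed-entries} (applied with the columns of $I$) gives
\[\|\row_{i}(H)-\row_{i}(B')\|_{1}\le O(|\det M_{I}|^{-1})\cdot n\cdot \|(\row_{i}(H)-\row_{i}(B'))_{I}\|_{1},\]
and by \cref{claim:graph-lemma-application}, $|\det M_{I}|^{-1}=O(\tilde\alpha^{-(r-1)})$. The task thus reduces to controlling $\|(\row_{i}(H)-\row_{i}(B'))_{I}\|_{1}$ for each $i$.

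For $i\in I$, the $I$-restriction of $\row_{i}(H)-\row_{i}(B')$ vanishes in the $i$-coordinate and has absolute value at most $\tilde\alpha^{2r}$ in each of the other $q-1$ coordinates, by the defining property of $I$ from \cref{claim:graph-lemma-application}; there are only $q\le r-1$ such rows, so their contribution to $\|H-B'\|_{1}$ is at most $O(\tilde\alpha^{r+1})\, n$. For $i\notin I$, I would exploit symmetry of $H$: for each $j\in I$, $h_{ij}=h_{ji}=[\row_{j}(H)]_{i}$, so
\[|h_{ij}-b'_{ij}|\le |[\row_{j}(H)]_{i}-b'_{ji}|\;+\;|b'_{ji}-b'_{ij}|.\]
Summing the first term over $i\notin I$ gives $\|\row_{j}(H)-\row_{j}(B')\|_{1}$, which by the previous paragraph is $O(\tilde\alpha^{r+1})\, n$. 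For the second term I would use $a'_{ji}=a'_{ij}$ (from symmetry of $A'$) to write $|b'_{ji}-b'_{ij}|\le |b'_{ji}-a'_{ji}|+|a'_{ij}-b'_{ij}|$, and then bound the sum over $i\notin I$ by $\|\row_{j}(A')-\row_{j}(B')\|_{1}+\|\col_{j}(A')-\col_{j}(B')\|_{1}=O(\tilde\alpha^{3r})\, n$ via \cref{claim:zero-some-columns}. Summing over the $q\le r-1$ choices of $j\in I$ gives $\sum_{i\notin I}\|(\row_{i}(H)-\row_{i}(B'))_{I}\|_{1}=O(\tilde\alpha^{r+1})\, n$, and applying the displayed inequality then yields $\sum_{i\notin I}\|\row_{i}(H)-\row_{i}(B')\|_{1}=O(\tilde\alpha^{2})\, n^{2}\le O(\tilde\alpha)\, n^{2}$.

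The main obstacle I expect is the bookkeeping in the $i\notin I$ case. Since $B'$ need not be symmetric, propagating the bound from rows indexed by $I$ to rows indexed by $[n]\setminus I$ requires converting between $b'_{ij}$ and $b'_{ji}$. The symmetry of $A'$ combined with both the row- and column-closeness of $A'$ and $B'$ provided by \cref{claim:zero-some-columns} is exactly what is needed to absorb this asymmetry, and the $O(\tilde\alpha^{3r})$ savings there comfortably compensate the $O(\tilde\alpha^{-(r-1)})$ loss from inverting $M_{I}$.
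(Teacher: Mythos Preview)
Your proof is correct and the estimation of $\|H-B'\|_1$ is essentially the same as the paper's: both use \cref{claim-linear-comb-prescribed-entries} to reduce to controlling $\sum_i\|(\row_i(H)-\row_i(B'))_I\|_1$, and both convert the ``column picture'' $(H-B')_I$ into the ``row picture'' $(H-B')^I$ via the symmetry of $H$ and $A'$ together with the row/column closeness of $A'$ and $B'$ from \cref{claim:zero-some-columns}.

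Where you genuinely diverge from the paper is in the \emph{construction} of $H$. The paper builds $H$ row by row: first it extends $\row_i(H)$ for $i\in I$ using \cref{claim-linear-comb-prescribed-entries}, then defines $h_{ij}=h_{ji}$ for $i\notin I$, $j\in I$, and extends again. Since this procedure only enforces $h_{ij}=h_{ji}$ when one of $i,j$ lies in $I$, the paper needs a separate auxiliary result (\cref{lem:funny-matrix-symmetric}) to conclude that the full matrix is symmetric. Your explicit formula $H=W^{T}DW$ with $D=(M_I^{T})^{-1}(h_{ij})_{i,j\in I}\,M_I^{-1}$ produces the same matrix (uniqueness in \cref{claim-linear-comb-prescribed-entries} forces this), but symmetry and the row-span condition are immediate from the shape of the expression, so \cref{lem:funny-matrix-symmetric} is bypassed entirely. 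This is a cleaner route to the same object.
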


The proof of \cref{claim:final-symmetric} will require \cref{claim-linear-comb-prescribed-entries} and the following technical lemma.

\begin{lem}\label{lem:funny-matrix-symmetric}
Consider vectors $v_1,\dots, v_q\in \FF^n$, and let $M$ be the matrix with these vectors as rows. Consider a subset $I\subseteq [n]$ with size $q$, such that the $q\times q$ matrix $M_I$ is invertible. Let $H=(h_{ij})_{i,j}\in \FF^{n\times n}$ be a matrix each of whose rows is in $\spn(v_1,\dots,v_q)$, such that $h_{ij}=h_{ji}$ for all $i\in I$ and all $j\in [n]$. Then $H$ is symmetric.
\end{lem}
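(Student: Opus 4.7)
The plan is to reduce the lemma to the symmetry of the $q\times q$ submatrix $H_{I,I}$, which is given by hypothesis. The key observation is that since $M_I$ is invertible, any vector in $\spn(v_1,\dots,v_q)$ is uniquely determined by its restriction to the coordinates in $I$. Concretely, setting $N = M_I^{-1} M$ (a $q\times n$ matrix with $N_I = I_q$), every vector $v$ in the row span of $M$, viewed as a row vector, satisfies $v = v_I \cdot N$: this follows because $N$ is obtained from $M$ by left-multiplying by an invertible matrix (so the row spans of $M$ and $N$ coincide) and because $N_I = I_q$. Applying this entrywise to the rows of $H$ yields the matrix identity $H = H_{:,I}\, N$.

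Next, I would rewrite the hypothesis $h_{ij}=h_{ji}$ for $i\in I$, $j\in[n]$ as the matrix identity $H_{:,I} = (H_{I,:})^T$. The factorization from the previous paragraph, applied to each row of the submatrix $H_{I,:}$, also gives $H_{I,:} = H_{I,I}\, N$. Substituting these into the expression for $H$ yields
\[
H \;=\; H_{:,I}\, N \;=\; (H_{I,:})^T\, N \;=\; (H_{I,I}\, N)^T\, N \;=\; N^T\, H_{I,I}^T\, N.
\]
Since both indices in $H_{I,I}$ lie in $I$, the hypothesis forces $H_{I,I}$ to be symmetric, so $H = N^T H_{I,I} N$, which is manifestly symmetric.

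I do not foresee any real obstacle; the entire argument is a short matrix computation. The only point that requires a moment of care is the uniqueness of the extension $v \mapsto v_I \cdot N$ and its compatibility under transposition — i.e., that applying the factorization once to the rows of $H$ and once to the rows of $H_{I,:}$ combines cleanly into the sandwich form $N^T H_{I,I} N$. Everything else is formal.
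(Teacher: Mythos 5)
Your proof is correct and follows essentially the same idea as the paper's: both reduce to the fact that once $M_I$ is invertible, every row of $H$ is determined by its restriction to $I$, and then combine this with the hypothesis on $h_{ij}$ for $i\in I$. The paper normalizes $M_I$ to the identity and argues in coordinates, arriving (after expansion) at exactly the entries of your sandwich form $N^T H_{I,I} N$; your matrix-level formulation is a slightly cleaner packaging of the same computation.
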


It is not hard to see that \cref{claim-matrix-A-star,claim:close-to-special-low-rank,claim:close-to-special-low-rank,claim:zero-some-columns,claim:graph-lemma-application,claim:final-symmetric} imply \cref{lem-matrix-rank-close}. Indeed, the symmetric matrix $H$ in \cref{claim:final-symmetric} clearly has rank at most $q\leq r-1$. Furthermore, as $\Vert A'-B'\Vert_1=\sum_{i=1}^n \Vert \row_i(A')-\row_i(B')\Vert_1\leq  O(\tilde{\alpha}^{3r})\cdot n^2$ by \cref{claim:zero-some-columns}, we obtain
\[\Vert H-A\Vert_1\leq \Vert H-B'\Vert_1+\Vert B'-A'\Vert_1+\Vert A'-A^*\Vert_1+\Vert A^*-A\Vert_1\leq O(\tilde{\alpha}+\delta)\cdot n^2=O(\alpha^{(6r)^{-r}}+\delta)\cdot n^2.\]

It remains to prove \cref{claim-matrix-A-star,claim:close-to-special-low-rank,claim:zero-some-columns,claim:graph-lemma-application,claim:final-symmetric} and \cref{lem:funny-matrix-symmetric}.

\begin{proof}
[Proof of \cref{claim-matrix-A-star}]
By assumption, there do not exist $\delta n$ disjoint $r$-tuples of $\alpha$-independent rows of $A$. Choose a maximal collection of such $r$-tuples and let $J\subseteq [n]$ be the set of all rows involved (so $|J|\le r\delta n$). Let $A^*=(a^*_{ij})_{i,j}\in \FF^{n\times n}$ be the symmetric matrix obtained from $A$ by setting to zero all rows and all columns with indices in $J$. As $\Vert A\Vert_\infty\le 1$, we have $
\Vert A^*-A\Vert_1\leq 2 \vert J\vert n\leq 2r \delta  n^2=O(\delta)\cdot n^2$. We claim that $A^*$ does not have any $r$-tuple of $\alpha$-independent rows. Clearly, no $r$-tuple containing a zero row can be $\alpha$-independent. For any $r$-tuple of rows of $A^*$ with indices in $[n]\setminus J$, the corresponding $r$-tuple of rows in $A$ must be $\alpha$-dependent (by the maximality of the collection chosen in the beginning). It is not hard to see that this $r$-tuple stays $\alpha$-dependent when setting to zero the columns with indices in $J$.
\end{proof}

\begin{proof}
[Proof of \cref{claim:close-to-special-low-rank}]
We can apply \cref{lem-choose-from-list} with $\eps=\alpha^{(6r)^{-r}}$ and the list of rows $v_1,\dots,v_n$ of $A^*$, to obtain $\alpha^{(6r)^{q-r}}$-independent rows $w_1,\dots,w_q$ of $A^*$ for some $0\leq q\leq r-1$, as well as a list of vectors $v_1',\dots,v_n'\in \spn(w_1,\dots,w_q)$ satisfying $\Vert v_i-v_i'\Vert_1\le O(\alpha^{5r(6r)^{q-r}})n\le O(\alpha^{4r(6r)^{q-r}})n$ for all $i\in [n]$ (the wasteful second inequality here will make it easier to explain how to adapt this proof to prove \cref{lem-matrix-rank-close-set} in \cref{sec:lem-matrix-rank-close-set}). Then define $\tilde\alpha=\alpha^{(6r)^{q-r}}\leq \alpha^{(6r)^{-r}}$ and let $B^*\in \FF^{n\times n}$ be the matrix with rows $v_1',\dots,v_n'$.
\end{proof}

\begin{proof}
[Proof of \cref{claim:zero-some-columns}]
Note that by the assumptions on $A^*$ and $B^*$, we have
\[\sum_{i=1}^n \Vert \col_i(A^*)-\col_i(B^*)\Vert_1=\Vert A^*-B^*\Vert_1=\sum_{i=1}^n \Vert \row_i(A^*)-\row_i(B^*)\Vert_1\leq  O(\tilde{\alpha}^{4r})\cdot n^2.\]
Defining $J\su [n]$ be the set of those indices $j\in [n]$ such that $\Vert \col_j(A^*)-\col_j(B)\Vert_1\geq \tilde{\alpha}^{3r}\cdot n$, we obtain that $\vert J\vert\leq O(\tilde \alpha^r)\cdot n$. Let the matrices $A'$ and $B'$ be obtained from $A^*$ and $B^*$ by setting to zero all the entries in all rows and columns with indices in $J$. Similarly, let the vectors $w_1',\dots w_q'$ be obtained from $w_1,\dots,w_q$ by setting the entries with indices in $J$ to zero. Since each row of $B^*$ lies in $\spn(w_1,\dots,w_q)$, each row of $B'$ lies in $\spn(w_1',\dots,w_q')$ (note that this is trivially true for the all-zero rows with indices in $J$).

Note that $A'$ is symmetric, since $A^*$ is symmetric. Furthermore, $\Vert A^*\Vert_\infty\le 1$, so $\Vert w_i\Vert_\infty\le 1$ and $\Vert w_i'\Vert_\infty\le 1$ for each $i$. We claim that $w_1',\dots,w_q'$ are $(\tilde\alpha/2)$-independent. For $q=0$ this is trivially true, so we may assume that $q\geq 1$ and therefore $r\geq 2$. Recall that $w_1,\dots,w_q$ are $\tilde\alpha$-independent, and that we changed only $r|J|=O(\tilde{\alpha}^{r})\cdot n\le (\tilde\alpha/2)\cdot n$ entries of $w_1,\dots,w_q$ to zero to obtain $w_1',\dots,w_q'$. Thus, the vectors $w_1',\dots,w_q'$ are indeed $(\tilde\alpha/2)$-independent.

We have $\Vert A'-A^*\Vert_1\le 2\vert J\vert n\leq  O(\tilde \alpha^{r})\cdot n^2$. For each $i\in J$, we have $\row_i(A^*)=0=\row_i(B^*)$ and $\col_i(A^*)=0=\col_i(B^*)$. On the other hand, for $i\in [n]\setminus J$, by the properties in \cref{claim:close-to-special-low-rank} we have $\Vert \row_i(A')-\row_i(B')\Vert_1\le \Vert \row_i(A^*)-\row_i(B^*)\Vert_1\le O(\tilde{\alpha}^{4r})\cdot n$  and by the definition of $J$ we have $\Vert \col_i(A')-\col_i(B')\Vert_1\le \Vert \col_i(A^*)-\col_i(B^*)\Vert_1< \tilde{\alpha}^{3r}\cdot n$.
\end{proof}

\begin{proof}
[Proof of \cref{claim:graph-lemma-application}]
Note that the case $q=0$ is trivial, so we may assume that $q\geq 1$ and therefore $r\geq 2$. Consider the graph $G$ on the vertex set $[n]$ where for any $1\leq i<j\leq n$ we draw an edge between the vertices $i$ and $j$ if $\vert a'_{ij}-b'_{ij}\vert\geq \tilde{\alpha}^{2r}$ or if $\vert a'_{ji}-b'_{ji}\vert\geq \tilde{\alpha}^{2r}$. By the last part of \cref{claim:zero-some-columns}, this graph has maximum degree $O(\tilde{\alpha}^{r})\cdot n$, so the desired result follows from \cref{lem-large-det-minor-graph-version} with $\eps=\tilde{\alpha}/4$ and $\delta=O(\tilde\alpha^r)$ (using that $\eps+q(q-1)\delta\leq \tilde{\alpha}/4+O(\tilde\alpha^r)\leq \tilde\alpha/2$ as $r\geq 2$, and also recalling that $q\leq r-1$).
\end{proof}

\begin{proof}
[Proof of \cref{claim:final-symmetric}]
Recall that for $i\in I$, we defined $h_{ii}=b'_{ii}$, and for distinct $i,j\in I$ we defined $h_{ij}=a'_{ij}$. For every $i\in I$, we can uniquely extend the vector $(h_{ij})_{j\in I}\in \FF^I$ to a vector $\row_i(H)=(h_{ij})_{1\le j\le n}\in \spn(w_1',\dots,w_q')$, using \cref{claim-linear-comb-prescribed-entries}. Then, using that every row of $B'$ is also in $\spn(w_1',\dots,w_q')$, by the second part of \cref{claim-linear-comb-prescribed-entries} we have (recalling the defining properties of $I$ in \cref{claim:graph-lemma-application})
\begin{equation}
\Vert \row_i(H)-\row_i(B')\Vert_1 \leq  O(|\det M_I|^{-1})\cdot n\cdot \sum_{j\in I\setminus\lbrace i\rbrace}\vert a'_{ij}-b'_{ij}\vert\le O(\tilde{\alpha}^{1-r})\cdot n\cdot q\cdot \tilde\alpha^{2r}=O(\tilde\alpha^{r+1})\cdot n.\label{eq-row-H-row-B}
\end{equation}
for every $i\in I$. So far we have defined $h_{ij}$ for $i\in I$ and $j\in [n]$. Since $A'$ is symmetric, we have $h_{ij}=a_{ij}'=a_{ji}'=h_{ji}$ for distinct $i,j\in I$. Now, for $j\in I$ and $i\in [n]\setminus I$, let us define $h_{ij}=h_{ji}$. Then, for $i\in [n]\setminus I$, we proceed very similarly to before: we can uniquely extend the vector $(h_{ij})_{j\in I}\in \FF^I$ to a vector $\row_i(H)=(h_{ij})_{1\le j\le n}\in \spn(w_1',\dots,w_q')$. The resulting matrix $H=(h_{ij})_{i,j}$ satisfies $h_{ij}=h_{ji}$ for all $i\in I$ and $j\in [n]$ and all of its rows lie in $\spn(w_1',\dots,w_q')$. So by \cref{lem:funny-matrix-symmetric}, $H$ is therefore symmetric. Furthermore, for all $i\in [n]$ by the second part of \cref{claim-linear-comb-prescribed-entries} we have
\[\Vert \row_i(H)-\row_i(B')\Vert_1\leq O(\tilde{\alpha}^{1-r})\cdot n\cdot \sum_{j\in I}\vert h_{ij}-b'_{ij}\vert\]
and therefore we obtain
\[\Vert H-B'\Vert_1=\sum_{i=1}^n \Vert \row_i(H)-\row_i(B')\Vert_1\leq O(\tilde{\alpha}^{1-r})\cdot n\cdot \sum_{i=1}^n\sum_{j\in I}\vert h_{ij}-b'_{ij}\vert=O(\tilde{\alpha}^{1-r})\cdot n\cdot \Vert (H-B')_I\Vert_1.\]

For an $n\times n$ matrix $B$, let $B^I$ denote the submatrix consisting of the rows indexed by $I$. By the properties of $A'$ and $B'$ in \cref{claim:zero-some-columns} we have
\[\Vert (A'-B')^I\Vert_1\le O(\tilde \alpha^{3r})\cdot n\quad\text{ and }\quad \Vert (A'-B')_I\Vert_1\le O(\tilde \alpha^{3r})\cdot n,\]
and by symmetry of $H$ and $A'$, we have $\Vert (H-A')_I\Vert_1=\Vert (H-A')^I\Vert_1$, so
\begin{align*}
\Vert (H-B')_I\Vert_1&\leq \Vert (H-A')_I\Vert_1+\Vert (A'-B)_I\Vert_1\\
&\le \Vert (H-A')^I\Vert_1+O(\tilde \alpha^{3r})\cdot n\le\Vert (H-B')^I\Vert_1+O(\tilde \alpha^{3r})\cdot n.
\end{align*}
On the other hand, from \cref{eq-row-H-row-B} we obtain $\Vert (H-B')^I\Vert_1\leq O(\tilde\alpha^{r+1})\cdot n$, so it follows that $\Vert (H-B')_I\Vert_1\leq O(\tilde\alpha^{r+1})\cdot n$ and therefore $\Vert H-B'\Vert_1\le O(\tilde\alpha^{1-r})\cdot n\cdot O(\tilde\alpha^{r+1})\cdot n=O(\tilde\alpha^2)\cdot n^2$.
\end{proof}

\begin{proof}
[Proof of \cref{lem:funny-matrix-symmetric}]
Write $v_{ij}$ for the $j$th component of $v_i$. Without loss of generality we may assume that $I=[q]\su [n]$. Also, we may assume that $M_I$ is the $q\times q$ identity matrix (we can replace $v_1,\dots,v_q$ by different vectors with the same span).

Now, each row of $H$ is a linear combination of $v_1,\dots,v_q$, and given the above assumptions it is easy to read off the coefficients: $\row_i(H)=h_{i1}v_1+\dots +h_{iq}v_q$ for all $i\in [n]$. So, using the assumption that $h_{ik}=h_{ki}$ for all $k\in I=[q]$ and all $i\in [n]$, we have
\[h_{ij}=\sum_{k=1}^q h_{ik}v_{kj}=\sum_{k=1}^q h_{ki}v_{kj}=\sum_{k=1}^q\left(\sum_{\ell=1}^q h_{k\ell}v_{\ell i}\right) v_{kj}=\sum_{ k,\ell\in [q]}h_{k\ell}v_{\ell i} v_{kj}\]
for all $i,j\in [n]$. (For the third equality, we read off the coefficients for $\row_k(H)$ in the same way we read off the coefficients for $\row_i(H)$.) This expression is symmetric in $i$ and $j$, since $h_{k\ell}=h_{\ell k}$ for all $k,\ell\in [q]$.
\end{proof}

\subsection{Adapting the proof for \texorpdfstring{\cref{lem-matrix-rank-close-set}}{Lemma~\ref{lem-matrix-rank-close-set}}\label{sec:lem-matrix-rank-close-set}}

In this section we describe how to prove \cref{lem-matrix-rank-close-set}, by slightly modifying the proof of \cref{lem-matrix-rank-close} in the previous subsection. Specifically, given a finite set $S\subseteq \FF$ we define a finite set $S'=S'(r,S)$, and given a matrix $A$ with entries in $S$, we describe how to adapt the proof of \cref{lem-matrix-rank-close} to ensure that $H$ has entries in $S'$. 

\begin{defn}
\label{def:S'}
Given a non-negative integer $q$ and a finite set $S\su \FF$ with $0\in S$, define $T_q(S)\su \FF^r$ to be the set of those vectors $v\in \FF^r$ which are the solution to an equation of the form $Mv=z$ for some invertible matrix $M\in S^{q\times q}$ and some vector $z\in S^q$. Also, define
$\tau_q(S)=\lbrace v^T w: v\in T_q(S), w\in S^q\rbrace$. Finally, for a positive integer $r$, define $\sigma_r(S)=S\cup \tau_1(S)\cup\dots\cup \tau_{r-1}(S)$ and $S'(r,S)=\sigma_r(\sigma_r(\sigma_r(S)))$.
\end{defn}

Note that the sets $T_q(S)$, $\tau_q(S)$, $\sigma_r(S)$ and $S'(r,S)$ in \cref{def:S'} are finite.

Recall that in the proof of \cref{lem-matrix-rank-close} (more specifically, in the proof of \cref{claim:final-symmetric}) all rows of the matrix $H$ were chosen by applying \cref{claim-linear-comb-prescribed-entries} to find a vector in $\spn(w_1',\dots,w_q')$ with certain prescribed entries. In order to ensure that the entries of $H$ are in $S'$, we need a way to control the entries of the vectors found when applying \cref{claim-linear-comb-prescribed-entries}. The following lemma gives such control.

\begin{lem}\label{lem-lin-comb-in-tau}
Fix a non-negative integer $q$ and a finite set $S\su \FF$ with $0\in S$. Consider vectors $w_1,\dots,w_q\in S^n$ and a subset $I\su [n]$ of size $q$. Let $M$ be the $q\times n$ matrix whose rows are the vectors $w_1,\dots,w_q$, and suppose that its $q\times q$ submatrix $M_I$ is invertible. Finally, consider $v=(v^{(1)},\dots, v^{(n)})\in \spn(w_1,\dots,w_q)$ such that $v^{(i)}\in S$ for all $i\in I$. Then $v\in (\tau_q(S))^n$. In particular, if $r$ is a positive integer such that $q\leq r-1$, we have $v\in (\sigma_r(S))^n$.
\end{lem}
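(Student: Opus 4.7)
The plan is to write $v$ explicitly as a linear combination of $w_1,\dots,w_q$ and then read off the two pieces required by the definition of $\tau_q(S)$: the coefficient vector lies in $T_q(S)$, and the columns of $M$ lie in $S^q$ by hypothesis.

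More precisely, first I would use that $v \in \spn(w_1,\dots,w_q)$ to write $v = a_1 w_1 + \dots + a_q w_q$ for a unique coefficient vector $a=(a_1,\dots,a_q)^T \in \FF^q$ (uniqueness follows from $M_I$ being invertible, hence $w_1,\dots,w_q$ being linearly independent). Restricting the identity $v = a_1 w_1 + \dots + a_q w_q$ to the coordinates in $I$ gives the matrix equation $M_I^T a = v_I$. Here $M_I^T \in S^{q\times q}$ since all entries of $w_1,\dots,w_q$ lie in $S$, $M_I^T$ is invertible since $M_I$ is, and $v_I \in S^q$ by the assumption that $v^{(i)}\in S$ for $i \in I$. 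Hence $a$ is a solution of a system of the form $Nv'=z$ with $N\in S^{q\times q}$ invertible and $z\in S^q$, so $a \in T_q(S)$ by definition.

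Next, for any $j \in [n]$, writing $\col_j(M)=(w_1^{(j)},\dots,w_q^{(j)})^T \in S^q$, we have $v^{(j)} = \sum_{k=1}^q a_k w_k^{(j)} = a^T \col_j(M)$. This is precisely an expression of the form $u^T w$ with $u \in T_q(S)$ and $w \in S^q$, so $v^{(j)} \in \tau_q(S)$ by the definition of $\tau_q(S)$. Therefore $v \in (\tau_q(S))^n$.

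For the ``in particular'' statement, if $q \ge 1$ and $q \le r-1$, then $\tau_q(S) \subseteq \sigma_r(S)$ directly by the definition $\sigma_r(S)=S\cup \tau_1(S)\cup\dots\cup \tau_{r-1}(S)$. The only edge case is $q=0$: then $\spn(w_1,\dots,w_q)=\{0\}$ forces $v=0$, and $0 \in S \subseteq \sigma_r(S)$ (using the convention that $0\in S$) gives $v \in (\sigma_r(S))^n$. There is no real obstacle here — the argument is essentially just unpacking definitions — the only mild subtlety is keeping the row/column conventions consistent so that the system $M_I^T a = v_I$ falls into the exact form demanded by the definition of $T_q(S)$.
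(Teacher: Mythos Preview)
Your proof is correct and follows essentially the same approach as the paper's own proof: write $v$ as a linear combination of $w_1,\dots,w_q$, restrict to the coordinates in $I$ to obtain the system $M_I^T a = v_I$ showing $a\in T_q(S)$, and then observe that each coordinate $v^{(j)}=a^T\col_j(M)$ lies in $\tau_q(S)$. Your explicit handling of the $q=0$ edge case is a small addition not spelled out in the paper, but otherwise the arguments are identical.
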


Note that \cref{lem-lin-comb-in-tau} implies the following: if we apply \cref{claim-linear-comb-prescribed-entries} to vectors $w_1,\dots,w_q$ with entries in $S$, and the prescribed values $v^{(i)}$ also all lie in $S$, then the resulting vector $v$ in \cref{claim-linear-comb-prescribed-entries} satisfies $v\in (\tau_q(S))^n\su (\sigma_r(S))^n$.

\begin{proof}[Proof of \cref{lem-lin-comb-in-tau}]
Write $v=\lambda_1w_1+\dots+\lambda_qw_q$ for some $\lambda=(\lambda_1,\dots,\lambda_q)\in \FF^q$. Only considering the coordinates with indices in $I$, this implies $v_I=M_I^T\lambda$. We have  $v_I\in S^q$, and $M_I^T\in S^{q\times q}$ is an invertible matrix, so $\lambda\in T_q(S)$. Now, for each $i\in [n]$, the entry $v^{(i)}$ is the product of the row vector $\lambda=(\lambda_1,\dots,\lambda_n)$ with the column vector formed by the $i$-th coordinates of $w_1,\dots,w_q$ (this column vector is in $S^q$). Hence $v^{(i)}\in \tau_q(S)\subseteq \sigma_r(S)$ for all $i\in [n]$.
\end{proof}

In order to control the entries of the rows of $H$ when applying \cref{claim-linear-comb-prescribed-entries} together with \cref{lem-lin-comb-in-tau}, we clearly also need to control the entries we prescribe. These prescribed entries ultimately depend on the entries of $A'$ and $B'$ (see our definition of $h_{ij}$ for $i,j\in I$ in \cref{claim:final-symmetric}). Each entry of $A'$ is also an entry of $A$ or equals zero (by the way we constructed $A'$ and $A^*$ in the proofs of \cref{claim-matrix-A-star,claim:close-to-special-low-rank}), so we can easily control the entries of $A'$ (and similarly $A^*$). However, in order to control the entries of $B'$, we need to control the entries of $B^*$, which were obtained by applying \cref{lem-choose-from-list} to the list of rows of $A^*$.

The following lemma will be used in combination with \cref{lem-choose-from-list}. While it does not give direct control over the vectors $v_i'\in \spn(w_1,\dots,w_q)$ obtained from \cref{lem-choose-from-list} as approximations of the rows of $A^*$, it states that we can find slightly weaker approximations ensuring that the entries of these approximating vectors lie in a certain set.

\begin{lem}\label{lem-lin-comb-in-tau-close}
Fix a non-negative integer $q$ and a finite set $S\su \FF$ with $0\in S$ and $\vert s\vert\leq 1$ for all $s\in S$. Consider $0<\alpha\leq 1$ and $0<\eta\leq 1$, as well as $\alpha$-independent vectors $w_1,\dots,w_q\in S^n$ and a vector $v\in S^n$, such that there is some vector $\tilde{v}\in \spn(w_1,\dots,w_q)$ with $\Vert v-\tilde{v}\Vert_1\leq \eta\cdot n$. Then there is a vector $v^*\in (\tau_{q}(S))^n\cap \spn(w_1,\dots,w_q)$ with $\Vert v-v^*\Vert_1\leq O(\alpha^{-(q+1)})\cdot \eta \cdot n$.
\end{lem}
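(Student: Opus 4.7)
The plan is to produce $v^*$ by identifying a large ``good'' set of coordinates on which $v$ and $\tilde v$ agree pointwise, picking an invertible $q \times q$ submatrix $M_I$ with $I$ inside this good set, and using \cref{claim-linear-comb-prescribed-entries} to define $v^*\in \spn(w_1,\dots,w_q)$ by prescribing $v^*_I = v_I$. The key point is that on such a good $I$, the bound in \cref{claim-linear-comb-prescribed-entries} will be driven by the tiny quantity $\Vert v_I - \tilde v_I\Vert_1$ rather than the full $L_1$-distance $\Vert v - \tilde v\Vert_1 \leq \eta n$.

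First, I would dispose of the trivial case $q = 0$: then $\spn(w_1,\dots,w_q) = \{0\}$, so $\tilde v = 0$, $(\tau_0(S))^n = \{0\}^n$, and $v^* = 0$ works with $\Vert v - v^*\Vert_1 = \Vert v\Vert_1 \leq \eta n$.

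For $q \geq 1$, by Markov's inequality applied to $\Vert v - \tilde v\Vert_1 \leq \eta n$, the set $G := \{i \in [n] : |v_i - \tilde v_i| \leq 2\eta\}$ has size at least $n/2$. Next, I would argue that the restrictions of $w_1,\dots,w_q$ to coordinates in $G$ remain $\alpha$-independent as vectors in $\FF^{|G|}$: any perturbation in $\FF^{|G|}$ making these restrictions linearly dependent extends by zero-padding to a perturbation of the $w_i$ of the same $L_1$-norm, so $\alpha$-independence of the originals (forcing any such perturbation to have $L_1$-norm exceeding $\alpha n$) yields $(\alpha n/|G|)$-independence, which is at least $\alpha$-independence. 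Applying \cref{lem-large-det-minor} inside $\FF^{|G|}$ then produces an index set $I \subseteq G$ of size $q$ with $|\det M_I| \geq \alpha^q$.

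With this $I$, I would invoke \cref{claim-linear-comb-prescribed-entries} prescribing $v^{(i)} = v_i$ for $i \in I$ to obtain a unique $v^* \in \spn(w_1,\dots,w_q)$ with $v^*_I = v_I$. Since every $i \in I \subseteq G$ satisfies $|v_i - \tilde v_i| \leq 2\eta$, we get $\Vert v_I - \tilde v_I\Vert_1 \leq 2q\eta$, whence
\[
\Vert v^* - \tilde v\Vert_1 \leq O(|\det M_I|^{-1}) \cdot n \cdot \Vert v_I - \tilde v_I\Vert_1 = O(\alpha^{-q}) \eta n,
\]
and combining with $\Vert v - \tilde v\Vert_1 \leq \eta n$ gives $\Vert v - v^*\Vert_1 \leq O(\alpha^{-q})\eta n \leq O(\alpha^{-(q+1)})\eta n$ (absorbing the extra factor using $\alpha \leq 1$). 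Finally, because $v^*_I = v_I \in S^q$ (using $v \in S^n$) and $M_I$ is invertible, \cref{lem-lin-comb-in-tau} yields $v^* \in (\tau_q(S))^n$, as required. The only nontrivial ingredient is the first one: the $L_1$ hypothesis alone does not furnish pointwise control anywhere, so the trick of isolating a dense good set $G$ via Markov and then exploiting that $\alpha$-independence passes to any sufficiently large coordinate restriction is what makes \cref{claim-linear-comb-prescribed-entries} efficient enough here.
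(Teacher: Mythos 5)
The key step in your argument — that $w_1|_G,\dots,w_q|_G$ remain $\alpha$-independent as vectors in $\FF^{|G|}$ — does not follow from the claimed zero-padding trick, and is in fact false for the set $G$ you define. If $u_1,\dots,u_q\in\FF^{|G|}$ are linearly dependent and $\hat u_i\in\FF^n$ is defined by $\hat u_i = u_i$ on $G$ and $\hat u_i = w_i$ off $G$ (so that $\sum_i\Vert w_i-\hat u_i\Vert_1 = \sum_i\Vert w_i|_G - u_i\Vert_1$, i.e.\ ``the same $L_1$-norm''), then $\hat u_1,\dots,\hat u_q$ are \emph{not} in general linearly dependent: a vanishing combination of the $u_i$ on $G$ need not vanish on $[n]\setminus G$. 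The alternative padding (setting $\hat u_i=0$ off $G$) does give linearly dependent vectors but inflates the $L_1$-cost by $\sum_i\Vert w_i|_{[n]\setminus G}\Vert_1$, which for your $G$ of size only $\geq n/2$ can be as large as $q n/2$. A concrete counterexample: with $q=1$ take $w_1=(0,\dots,0,1,\dots,1)$ (half zeros, half ones); this is $(1/4)$-independent, but its restriction to the first $n/2$ coordinates is the zero vector and hence $\beta$-dependent for every $\beta$. So restriction to a constant fraction of the coordinates can destroy $\alpha$-independence entirely.

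The paper avoids this by choosing a much more aggressive threshold: $J:=\{i:|v^{(i)}-\tilde v^{(i)}|\geq 2q\alpha^{-1}\eta\}$, which by Markov has size only $|J|\leq (\alpha/(2q))n$, and then \emph{zeroing out} the coordinates in $J$ (rather than restricting), producing $w_1',\dots,w_q'\in\FF^n$ with $\sum_i\Vert w_i-w_i'\Vert_1\le q|J|\le(\alpha/2)n$. Because the exceptional set is that small, the $w_i'$ are $(\alpha/2)$-independent, and \cref{lem-large-det-minor} yields $I$ with $|\det M'_I|\geq(\alpha/2)^q$; since $I$ cannot meet the zeroed-out set $J$ (a zero column kills the determinant), $M_I=M'_I$. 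After this, your remaining steps — invoking \cref{claim-linear-comb-prescribed-entries} with prescribed values $v_i$ on $I$, bounding $\Vert v^*-\tilde v\Vert_1$ via the second part of that claim and $\Vert v_I-\tilde v_I\Vert_1\leq q\cdot 2q\alpha^{-1}\eta$, and invoking \cref{lem-lin-comb-in-tau} to place $v^*\in(\tau_q(S))^n$ — all go through and match the paper (and the $\alpha^{-(q+1)}$ in the target bound comes precisely from this $\alpha^{-q}\cdot\alpha^{-1}$ product, not from slack). So the approach and most of the execution are right; the flaw is confined to the justification for passing to a sub-coordinate set, where you need both a smaller exceptional set and the ``zero out, don't restrict'' device.
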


\begin{proof}
Note that for $q=0$ the statement is trivial: Indeed, we must have $\tilde{v}=0$ and can therefore take $v^*=\tilde{v}=0\in (\tau_{0}(S))^n\cap \spn(w_1,\dots,w_q)$. So let us from now on assume that $q\geq 1$.

Let $v=(v^{(1)},\dots, v^{(n)})\in S^n$ and $\tilde{v}=(\tilde{v}^{(1)},\dots, \tilde{v}^{(n)})\in \FF^n$. Since $\Vert v-\tilde{v}\Vert_1\leq \eta\cdot n$, there are at most $(\alpha/(2q))\cdot n$ indices $i\in [n]$ with $\vert \tilde{v}^{(i)}-v^{(i)}\vert\geq 2q\alpha^{-1}\eta$. Let $J$ be the set of those indices $i\in [n]$, then $\vert J\vert\leq (\alpha/(2q))\cdot n$. Then, let $w_1',\dots,w_q'$ be obtained from $w_1,\dots,w_q$ by setting to zero all entries with coordinates in $J$. Since all entries of the $\alpha$-independent vectors $w_1,\dots,w_q$ have absolute value at most $1$, and $\vert J\vert\cdot q\le (\alpha/2) \cdot n$, these new vectors $w_1',\dots,w_q'$ are $(\alpha/2)$-independent.

Let $M$ (respectively $M'$) be the $q\times n$ matrix with $w_1,\dots,w_q$ (respectively $w_1',\dots,w_q'$) as rows. By \cref{lem-large-det-minor}, there exists a subset $I\su [n]$ of size $q$ such that $|\det M'_I|\ge (\alpha/2)^q$. Note that $I\cap J=\emptyset$, since if $M'_I$ had a zero column it would have zero determinant. Hence $M_I=M'_I$ and $|\det M_I|\ge (\alpha/2)^q$.

Now we can apply \cref{claim-linear-comb-prescribed-entries} to obtain a vector $v^*\in \spn(w_1,\dots,w_q)$ such that for each $i\in I$ the $i$-th entry of $v^*$ equals $v^{(i)}\in S$. By \cref{lem-lin-comb-in-tau} we have $v^*\in (\tau_{q}(S))^n$, and by the second part of \cref{claim-linear-comb-prescribed-entries} we have (using that $\vert \tilde{v}^{(i)}-v^{(i)}\vert< 2q \alpha^{-1} \eta$ for all $i\in I\su [n]\setminus J$)
\[\Vert v^*-\tilde{v}\Vert_1\leq  O(|\det M_I|^{-1})\cdot n\cdot \sum_{i\in I} \vert \tilde{v}^{(i)}-v^{(i)}\vert\leq O(\alpha^{-q})\cdot n\cdot q\cdot 2q\alpha^{-1}\eta =O(\alpha^{-(q+1)})\cdot \eta \cdot n.\]
Hence $\Vert v-v^*\Vert_1\leq \Vert v-\tilde{v}\Vert_1+\Vert \tilde{v}-v^*\Vert_1\leq O(\alpha^{-(q+1)})\cdot \eta \cdot n$.
\end{proof}

Now, using \cref{lem-lin-comb-in-tau} and \cref{lem-lin-comb-in-tau-close}, it is not hard to modify the proof of \cref{lem-matrix-rank-close} to prove \cref{lem-matrix-rank-close-set}.

\begin{proof}
[Proof of \cref{lem-matrix-rank-close-set}]
Let $r$, $S$ and $A$ be as in \cref{lem-matrix-rank-close-set}. Then $A$ satisfies the assumptions in \cref{lem-matrix-rank-close} with the additional property that all entries of $A$ are in the set $S$. Let $S'=S'(r,S)$ be as in \cref{def:S'} (note that we may assume that $0\in S$, since otherwise we can replace $S$ by $S\cup \lbrace 0\rbrace$).

We can now proceed as in the proof of \cref{lem-matrix-rank-close} in the previous subsection. First, without making any changes to the proof of \cref{claim-matrix-A-star}, we observe that each entry of $A^*$ is either zero (which we are assuming is in $S$), or is the same as the corresponding entry in $A$ (which is also in $S$). Second, we consider the matrix $B^*$ in \cref{claim:close-to-special-low-rank}. In the proof of \cref{lem-matrix-rank-close} we applied \cref{lem-choose-from-list} to the rows $v_1,\dots,v_n\in S^n$ of $A^*$ to obtain a list of vectors $v_1',\dots,v_n'\in \spn(w_1,\dots,w_q)$ satisfying $\Vert v_i-v_i'\Vert_1\le O(\alpha^{5r(6r)^{q-r}})n=O(\tilde{\alpha}^{5r})n$ for all $i\in [n]$. We then took these vectors as the rows of $B^*$. For the proof of \cref{lem-matrix-rank-close-set} we need an additional step: for each $i$ we apply \cref{lem-lin-comb-in-tau-close} with $v=v_i$ and $\tilde v=v_i'$, which gives a vector $v_i^*\in (\tau_{q}(S))^n\cap\spn(w_1,\dots,w_q)$ with $\Vert v_i-v_i^*\Vert_1\leq O(\tilde\alpha^{-(q+1)})\cdot O(\tilde{\alpha}^{5r}) \cdot n\leq O(\tilde{\alpha}^{4r})n$ (since $q\leq r-1$). We can then take $v_1^*,\dots,v_n^*$ as the rows of $B^*$, so that all entries of $B^*$ lie in the set $\tau_{q}(S)\su \sigma_r(S)$.

Next, without making any modifications to the proof of \cref{claim:zero-some-columns}, observe that the matrices $A'$ and $B'$ have entries are in $\sigma_{r}(S)$ (the entries of $A'$ and $B'$ were chosen to be zeroes or entries of $A^*$ or $B^*$). Finally, when constructing the matrix $H=(h_{ij})_{i,j}$ as in the proof of \cref{claim:final-symmetric}, observe that the entries of $H$ all lie in $\sigma_r(\sigma_r(\sigma_r(S)))=S'$. Indeed, the entries $h_{ij}$ for $i,j\in I$ are in $\sigma_{r}(S)$. We obtained $\row_i(H)$ for $i\in I$ by applying \cref{claim-linear-comb-prescribed-entries} to these entries, so by \cref{lem-lin-comb-in-tau} all entries $h_{ij}$ with $i\in I$ and $j\in [n]$ lie in $\sigma_r(\sigma_r(S))$. Then we defined $h_{ij}=h_{ji}$ for $i\in [n]\setminus I$ and $j\in I$, and applied \cref{claim-linear-comb-prescribed-entries} again to obtain $\row_i(H)$ for $i\in [n]\setminus I$. Again by \cref{lem-lin-comb-in-tau} all entries of these rows lie in $\sigma_r(\sigma_r(\sigma_r(S)))$. Thus, all entries of $H$ lie in $\sigma_r(\sigma_r(\sigma_r(S)))$, as desired.
\end{proof}

\section{Concluding remarks}\label{sec:concluding}

There are still a number of future directions of research. Most
obviously, Costello's conjecture remains open, and there are several weakenings that
we think would already be very interesting to prove. For example,
can we remove the restriction on the coefficients in \cref{thm:rank-edit}? Can we prove any bound of
the form $n^{-1/2-\Omega\left(1\right)}$ on the point probabilities
of a real quadratic polynomial that is not close to splitting into
linear factors over the real numbers, even with strong assumptions
on the allowed coefficients?

Costello's conjecture also concerned higher-degree polynomials, and it would be interesting to investigate this direction as well. Iterating \cref{lem:complex-decoupling} does give a way to control higher-degree polynomials, but it seems unlikely that one could prove results as strong as \cref{thm:rank-L1,thm:rank-edit} without new ideas.

Regarding Ramsey graphs, it would be interesting if one could
remove the $o\left(1\right)$-term in \cref{thm:ramsey-KST}. It would also be very interesting to prove a bound on $\Pr(X=x)$ in terms of the variance of $X$, as follows. Such a bound would be best-possible due to Chebyshev's inequality.

\begin{conjecture}
\label{conj:KST}The following holds for any fixed constants $C,c>0$. Let $G$ be an $n$-vertex $C$-Ramsey graph, and, for some $cn\le k\le\left(1-c\right)n$, let $X$ be the
number of edges induced by a uniformly random subset of $k$ vertices
of $G$. Then for any $x\in\ZZ$,
we have
\[
\Pr\left(X=x\right)=O\left(\frac{1}{\sqrt{\Var X}}\right).
\]
\end{conjecture}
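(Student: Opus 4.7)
\medskip

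\noindent\textbf{Proof proposal.} The plan is to follow the strategy of Theorem 1.3, but to replace Theorem 1.2 at the crucial step with a substantially sharpened anti-concentration bound. Using Lemma 2.3, we write $X=f_\pi(\xi)$, where $\pi$ is a uniformly random permutation of $[n]$ and $\xi\in\Rad^m$ is a Rademacher vector with $m=\min(k,n-k)$. By the law of total variance,
\[
\Var X = \E_\pi\bigl[\Var_\xi(f_\pi(\xi))\bigr] + \Var_\pi\bigl(\E_\xi[f_\pi(\xi)]\bigr),
\]
and a direct computation yields $\Var_\xi(f_\pi(\xi))=\sum_{i\le j}a_{ij}^2+\sum_i a_i^2$. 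A McDiarmid-type inequality for random permutations (in the spirit of the proof of Claim 2.4) should give that this quantity is within a constant factor of $\Var X$ for all but an $o(1)$ fraction of permutations $\pi$. Setting $\sigma_\pi^2 = \Var_\xi(f_\pi(\xi))$, Conjecture 8.1 then reduces to proving that $\sup_x\Pr(f_\pi(\xi)=x)=O(1/\sigma_\pi)$ for such typical $\pi$.

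For the required sharpened anti-concentration, we can reuse the machinery of Section 2: Lemma 2.2 combined with the argument of Claim 2.4 shows that, with probability $1-o(1)$ over $\pi$, the coefficient matrix of the homogeneous quadratic part of $f_\pi$ admits arbitrarily many disjoint non-degenerate $r$-row submatrices, and hence $f_\pi$ satisfies the hypothesis of Lemma 3.1 for every fixed $r$. However, Lemma 3.1 only delivers a bound of order $(\log n)^{r/2}\cdot n^{-1+2/(r+2)}$, which can never match $1/\sigma_\pi$ once $\sigma_\pi=\omega(n)$; this is the typical situation, because for a $C$-Ramsey graph one expects $\sigma_\pi=\Theta(n^{3/2})$, the dominant contribution to $\Var X$ coming from pairs of edges sharing a vertex.

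To close this gap, one would need to upgrade the Fourier-analytic estimate underpinning Lemma 3.1. The current bound on the characteristic function is, roughly, $|\E e^{2\pi i t f_\pi(\xi)}|^2 = O((|t|\log n)^r)$ in the relevant range of $t$; what is needed instead is a Gaussian-type decay of the form $|\E e^{2\pi i t f_\pi(\xi)}|^2\le e^{-ct^2\sigma_\pi^2}$, capturing the true concentration of $f_\pi(\xi)$ on the scale of its standard deviation. Integrating such a bound against Ess\'een's inequality (Lemma 3.3) would then directly yield the desired point-probability estimate $\sup_x\Pr(f_\pi(\xi)=x)=O(1/\sigma_\pi)$.

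The main obstacle is precisely this last step. The decoupling trick (Lemma 3.4) takes a square root of the characteristic function before passing to the linear regime via Hal\'asz' inequality (Theorem 3.5), and the combined square-root and $n^{-r/2}$ losses cap the attainable exponent of $n$ strictly below $1$. Obtaining $O(1/\sigma_\pi)$ seems to require either avoiding this square-root loss (perhaps via a higher-order decoupling or a direct moment method for the characteristic function), or extracting finer distributional information about the linearised coefficients $A_\ell$ beyond the non-degeneracy input used by Hal\'asz. A proof of Conjecture 8.1 along these lines would essentially amount to a strong quantitative version of Costello's conjecture for the specific polynomials arising from Ramsey graphs.
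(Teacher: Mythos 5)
The statement you are addressing is \emph{Conjecture} 8.1, which the paper poses as an open problem and does not prove; the authors present it in the concluding remarks precisely because it lies beyond the reach of their methods (Theorem 1.3 only gives $\Pr(X=x)\le n^{o(1)-1}$, which falls short of $O(1/\sqrt{\Var X})$ even in the most favourable case $\Var X=\Theta(n^2)$). Your submission is accordingly not a proof but a strategy analysis, and you acknowledge as much at the end. Your diagnosis of the obstacle is essentially correct: the decoupling step halves the effective exponent, and Ess\'een's inequality combined with Hal\'asz (as deployed in the proof of the key anti-concentration lemma of Section 3) cannot deliver the Gaussian-scale decay $|\E e^{2\pi i t f_\pi(\xi)}|\le e^{-\Omega(t^2\sigma_\pi^2)}$ that would be needed.

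Two factual points should be corrected. First, you assert that for a typical $C$-Ramsey graph $\sigma_\pi=\Theta(n^{3/2})$, with the dominant variance coming from pairs of edges sharing a vertex; the paper states the opposite. It is $\sqrt{\Var X}=\Theta(n)$ that is typical, as for a quasirandom graph such as $\GG(n,1/2)$, whereas $\Theta(n^{3/2})$ is the extremal case (for instance, a disjoint union of two random halves). For near-regular quasirandom Ramsey graphs the degree fluctuations that would produce the $n^{3/2}$ term are absent, so in that regime the conjecture demands exactly $\Pr(X=x)=O(1/n)$, i.e.\ removing the $o(1)$ from Theorem 1.3. Second, your formula $\Var_\xi(f_\pi(\xi))=\sum_{i\le j}a_{ij}^2+\sum_i a_i^2$ should read $\sum_{i<j}a_{ij}^2+\sum_i a_i^2$: the polynomial $f_\pi$ from Lemma 2.3 has no quadratic diagonal terms, and in any case $\xi_i^2=1$ makes any such term a constant. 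Neither correction affects your bottom line, which correctly identifies the conjecture as out of reach of the paper's machinery.
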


We remark that $\sqrt{\Var X}$ can be as small as $\Theta(n)$ (which is typical for a random graph $\GG\left(n,1/2\right)$), and can be as large as $\Theta(n^{3/2})$ (for example, consider a disjoint union of two random graphs $\GG\left(n/2,1/2\right)$ on $n/2$ vertices). We find \cref{conj:KST} particularly compelling because, if true, it would give a very simple, unified proof of the conjectures of Narayanan--Sahasrabudhe--Tomon and Erd\H os--Faudree--S\'os stated in the introduction, concerning subgraphs of Ramsey graphs with different numbers of edges. For example, consider an $n$-vertex $C$-Ramsey graph $G$ and consider some $k\in \NN$ satisfying $k=\Omega(n)$ and $n/2-k=\Omega(n)$. To prove the Erd\H os--Faudree--S\'os conjecture, it suffices to show that $G$ has $\Omega(n^{3/2})$ induced subgraphs with $k$ vertices and different numbers of edges. This can be shown using \cref{conj:KST}, as follows.
\begin{proof}[Sketch proof of the Erd\H os--Faudree--S\'os conjecture, assuming \cref{conj:KST}]
Let $G$ and $k$ be as above. Alon and Kostochka (see \cite[Equation~(2)]{AK09}) observed that there is a sequence of $2k$-vertex sets $U_0,\dots,U_t\su V(G)$ such that $e(U_t)-e(U_1)=\Omega(n^{3/2})$, but $|e(U_i)-e(U_{i-1})|\le n$ for all $1\le i\le t$. That is, we can ``loosely fill'' an interval of length $\Omega(n^{3/2})$ with edge-counts of $2k$-vertex induced subgraphs. This can be shown by using a discrepancy theorem to find two $2k$-vertex induced subgraphs whose numbers of edges differ by $\Omega(n^{3/2})$, and switching vertices one-by-one between these two subgraphs.

Now, let $X_i$ be the number of edges in a random subset of $k$ vertices of $G[U_i]$, and let $\supp(X_i)$ be the support of the random variable $X_i$. Note that by Chebyshev's inequality, more than half of the probability mass of each $X_i$ falls in the interval $I_i=[e(U_i)/2-2\sqrt{\Var(X_i)},e(U_i)/2+2\sqrt{\Var(X_i)}]$, which has length $O(\sqrt{\Var(X_i)})$. Since each $G[U_i]$ is itself an $O(1)$-Ramsey graph, \cref{conj:KST} would say that every individual point in this interval has probability mass only $O(1/\sqrt{\Var(X_i)})$, from which one can show that $|\supp(X_i)\cap I_i|=\Omega(\sqrt{\Var(X_i)})=\Omega(|I_i|)$. Given that $\sqrt{\Var(X_i)}=\Omega(n)$ for all $i$, the union $\bigcup_{i=0}^t I_i$ covers an $\Omega(1)$-fraction of the length-$\Omega(n^{3/2})$ interval between $e(U_0)/2$ and $e(U_t)/2$, which means that $|\bigcup_{i=0}^tI_i|= \Omega(n^{3/2})$. By a simple greedy algorithm, we can choose a disjoint collection $(I_i)_{i\in J}$ of intervals covering an $\Omega(1)$-fraction of $\bigcup_{i=0}^t I_i$. Then $\sum _{i\in J} |I_i|= |\bigcup_{i\in J}I_i|=\Omega(n^{3/2})$ and therefore $|\bigcup_{i\in J} \supp(X_i)|\geq \sum _{i\in J} |\supp(X_i)\cap I_i|=\sum _{i\in J} \Omega(|I_i|)=\Omega(n^{3/2})$. This gives $\Omega(n^{3/2})$ different edge-counts of $k$-vertex induced subgraphs.
\end{proof}
Actually, it is tempting to wonder whether one can strengthen \cref{conj:KST} even further, and prove a local central limit theorem estimating each of the point probabilities $\Pr(X=x)$ in terms of a Gaussian density function. With sufficiently
detailed understanding of the local behaviour of ``edge-statistic'' random variables in Ramsey graphs,
it might be possible to prove an old conjecture of Erd\H os and McKay~\cite{Erd92,Erd97}
that there is an interval $I=\left\{ 0,1,2\dots,\Omega\left(n^{2}\right)\right\} $
such that in every $O\left(1\right)$-Ramsey graph, for every $m\in I$
there is an induced subgraph with exactly $m$ edges (see \cite{AKS03}
for some progress on this question).

Finally, there may also be interesting directions of research concerning some of the auxiliary lemmas in this paper. For example, in \cref{cor:symmetric-close} we proved that if a matrix with bounded entries is close to being low-rank, and close to being symmetric, then it is close to being simultaneously low-rank and symmetric. Can we drop the assumption that the entries of the matrix are bounded? There may be a connection between results of this type and the existing literature on low-rank approximation of matrices.


\section*{Acknowledgments} 
We would like to thank Jacob Fox for insightful discussions, and Ragib Zaman for simplifying the proof of \cref{lem:L1-real-to-complex}.

\bibliographystyle{amsplain}


\begin{dajauthors}
\begin{authorinfo}[mkwa]
  Matthew Kwan\\
  Szeg\H o Assistant Professor\\
  Stanford University\\
  Stanford, USA\\
  mattkwan\imageat{}stanford\imagedot{}edu \\
  \url{http://web.stanford.edu/~mattkwan/}
\end{authorinfo}
\begin{authorinfo}[lsau]
  Lisa Sauermann\\
  Szeg\H o Assistant Professor\\
  Stanford University\\
  Stanford, USA\\
  lsauerma\imageat{}stanford\imagedot{}edu \\
  \url{https://web.stanford.edu/~lsauerma/}
\end{authorinfo}
\end{dajauthors}

\end{document}